\title{Robustness  in  sparse  linear models: relative efficiency \\
 based on robust approximate message passing}
\author{Jelena Bradic  \\ \footnotesize  Department of Mathematics \\ \footnotesize University of California, San Diego\footnote{\today}}
\date{ }
\begin{document}
\maketitle

\begin{abstract}
{\small

Understanding efficiency in high dimensional linear models is a longstanding problem of interest. Classical work with smaller dimensional problems dating back to Huber and Bickel has illustrated the clear benefits  of efficient loss functions. When the number of parameters $p$ is of the same order as the sample size $n$, $p \approx n$,  an efficiency pattern different from the one of Huber was recently established.
 In this work, we  consider the effects of model selection on the estimation efficiency of penalized methods. 
 In particular, we explore 
whether sparsity results in new efficiency patterns    
 when $p > n$.
In the interest of deriving the   asymptotic mean squared error for regularized M-estimators, we use the powerful framework of approximate message passing.
We propose a novel, robust and sparse approximate message passing algorithm (RAMP), that is adaptive to the error distribution. Our  algorithm includes many non-quadratic and non-differentiable loss functions, therefore extending previous work that mostly concentrates on the least square  loss. 
 We derive its asymptotic mean  squared error and show its convergence, while allowing $p, n, s \to \infty$, with $n/p \in (0,1)$ and $n/s \in (1,\infty)$.
We identify new patterns of relative efficiency regarding a number of penalized $M$ estimators,
when  $p$ is much larger than   $n$.    We show that the classical information bound is no longer reachable,  even for  light--tailed error distributions.  We show that the penalized least absolute deviation estimator  dominates the penalized least square estimator, in cases of heavy tailed distributions. We observe this pattern for all choices of the    number of non-zero parameters $s$, both $s \leq n$ and $s \approx n$. In non-penalized problems where  there is no sparsity, i.e., $s =p \approx n$,  the opposite regime holds. Therefore, we discover that the presence of model selection   significantly changes the efficiency patterns.

}

\end{abstract}

\section{Introduction}
 

In recent years,  scientific communities  face major challenge with the  size and complexity  of the data analyzed. The size of such contemporary datasets and the number of variables collected
makes the search for, and exploitation of sparsity  vital to their statistical analysis. Moreover, the presence of heterogeneity, outliers and anomalous data in such samples is  very common.
   %
 However,
 statistical estimators that are not  designed for  both sparsity and  data irregularities simultaneously will give biased results, depending on the ``magnitude" of the deviation and  the ``sensitivity" of the method.

An example of an early work on robust statistics is  \cite{BA55,B53}. Specifically, they argue that a good statistical procedure should be insensitive to changes not involving the parameters, but should be effective in being sensitive to the changes of parameters to be estimated. 
Estimators based on a minimization of  non-differentiable loss functions are one common example of such estimators; in particular, the maximum likelihood  loss for generalized Laplace density with parameter $\alpha \in(0,1)$, takes the form $-\alpha |Y-A x|^{\alpha-1} \mbox{sign}(Y-Ax)$. 
Subsequently, \cite{T60}  discussed a telling example in which
very low frequency events could utterly destroy the average performance of
 optimal statistical estimators.
%
%
%
%
%
%
%
%
These observations led to a number of papers by  \cite{H60}, \cite{H68} and \cite{B75} who laid the comprehensive foundations of a theory of robust statistics.
 In particular, Huber's seminal 
work on M-estimators \citep{H73} established asymptotic properties of a class of  M-estimators in  the situation where the number of parameters, $p$, is fixed and the number of samples, $n$, tends to infinity.
Since   then,    numerous important
steps have been taken toward analyzing and quantifying robust statistical methods --
notably in the work of \cite{R84,DL88,Y87},   among others.
 Even today, there exist several (related) mathematical concepts of robustness (see \cite{M06}). 
More recently the work of \cite{K13} and \cite{DM13} illuminated surprising and novel robustness  properties of the least squares estimator, when the number of parameters is very close to the number of samples. 
 This illustrates diverse and rich  aspects of robustness and its intricate dependence on the dimensionality of the parameter space. 
%

Classical M-estimation theory ignored model selection out of necessity. Modern computational power allows statisticians to  deal with model-selection problems more realistically. Hence, statisticians have moved away from the M-estimators and started working on the penalized M-estimators; moreover, they     allow the number of parameters, $p$, to grow with the sample size, $n$. 
To further the focus on penalized M-estimators, we 
consider a  linear regression model: 
\begin{equation}\label{w}
  Y={A}  x_o+ W
\end{equation}
with $  Y=(Y_1, ... ,Y_n)^T \in \mathbb{R}^n$ a vector of responses, $A \in\RR^{n\times p}$ a known design matrix, ${ x}_o \in \RR^p$ a vector of parameters;  the noise vector ${ W}=(W_1, ..., W_n)^T \in \RR^n$   having i.i.d, zero-mean components each with distribution $F=F_w$  and a density function $f_w$. 
When  $p$, overcomes $n$ -- in particular when  $p \geq n$ --
a form of    sparsity   is imposed on the model parameters $x_o$, i.e., it is imposed that $\mbox{supp}({ x}_o) =\{ 1 \leq j \leq p: {x_o}_{j} \neq 0\} $ with $|\mbox{supp}({ x}_o)| = s$. 
Early work on sparsity inducing estimators,   includes  penalized   least squares (LS) estimators with various penalties  including $l_1$-penalty, Lasso,  \citep{T96}, concave penalty, SCAD \citep{FL01} and MCP \citep{Z10}, adaptive $l_1$ penalty \citep{Z06}, elastic net penalty \citep{ZH05}, and many more.  However,  when the error distribution $F_w$ deviates from the normal distribution, the $l_2$ loss function is typically changed to the $- \log f_w$. Unfortunately, in real life situations the error distribution $F_w$ is unknown and a method that adapts to many different distributions is needed. 
Following classical literature on M-estimators, penalized robust methods such as  penalized Quantile regression \citep{BC11},  penalized Least Absolute Deviation estimator \citep{WL13}, AR-Lasso estimator \citep{FFB14}, robust adaptive Lasso \citep{AMR14} and many more, have been proposed. These methods  penalize a convex loss function $\rho$ in the following manner
\begin{equation}\label{eqDef}
\hat{x}(\lambda) \equiv \underset{x\in\mathbf{R^p}}{\operatorname{arg  min \   }}  \mathcal{L}(x) =  \underset{x\in\mathbf{R^p}}{\operatorname{arg  min \   }}\sum_{i = 1}^n \rho (Y_i - A_i^Tx )+\lambda \sum_{j=1}^p P (x_j),
\end{equation}
for a suitable penalty function $P$. In the above display, $A_i$ denotes the $i$-th row-vector of the matrix $A$.

Despite the substantial
body of   work on robust M-estimators, 
there is very little work on robust properties of penalized M-estimators. 
Robust assessments of penalized statistical estimators customarily are made ignoring model selection.
 Typical properties discussed are   model selection consistency  or  tight  upper bounds on the statistical estimation error (e.g., \cite{BFW11,N12,FFB14,FLW14,LMTZ15,L15, L11,W13,C14}). 
In particular, the existing work 
  has  been primarily reduced to   the tools that are intrinsic to Huber's 
  M-estimators.  In order to do that, the authors establish  a model selection consistency and then reduce the analysis to this selected model assuming that the selected model is the true model.
However,  this analysis is dissatisfactory, as the necessary assumptions for the model selection consistency are far too restrictive. Hence, departures from such considerations  are highly desirable. They are also  difficult to achieve because  the analysis needs to  factor in    the model selection bias.    This is where our work makes progress. By   including the bias of the model selection in the analysis 
 we are able to answer question like: in high dimensional regime, which estimator is preferred?    In the
low-dimensional setting, several independent lines of work provide reasons for using  
 distributionally robust estimators over their least-squares  alternatives \citep{H81}. 
 However, in high dimensional setting, 
it remains an open question, what are the advantages of using a complicated
loss function over a simple loss function such as the squared loss? Can we better understand  how differences between probability distributions  affect penalized M-estimators? 
One powerful justification exists, using
the  point of view of statistical efficiency.  
\cite{H73} introduced the concept of minimax asymptotic variance estimator that achieves the minimal asymptotic variance for the least favorable distribution; the smaller the variance the more robust the estimator is.

Huber's proposed measure of robustness allows a  comparison of  estimators by comparing  their asymptotic variance; one caveat is that the two estimators need to be consistent    up to the same order.  
For cases with $p \geq n$ little or nothing is known about the asymptotic variance of the robust estimator \eqref{eqDef} as $p \to \infty$ whenever $n \to \infty$. Moreover, the
penalized M-estimator  is biased as it shrinks many coefficients to zero. For such estimators, the set of parameters for which Hodge's super--efficiency  occurs is not of measure zero.  
Hence, asymptotic variance may not be the most optimal criterion for comparison.   
This suggest that a different criterion for comparison needs to be considered  in the high dimensional asymptotic regime where $n \to \infty$, $p \to \infty$ and $n/p \to \delta \in (0,1)$. We  examine the asymptotic mean squared error (denoted with AMSE from hereon). AMSE
 is an effective  measure of efficiency as it combines both the effect of the bias and of the variance \citep{DL88}.  However, in $p \gg n$ regime, it is not obvious that the asymptotic  mean squared error  will satisfy the classical formula. 
 
 AMSE was studied in \cite{B13a,K13}
 for the case of ridge regularization, with $P(x_i)=x_i^2$, and when $p \leq n$ but $p \approx n$. In this setting AMSE is equal to the asymptotic  variance of $\hat \xb(\lambda)$. They discovered   a new Gaussian  component in the AMSE of $\hat x(\lambda)$ that cannot be explained by the traditional Fisher Information Matrix.  To analyze AMSE for the case of no-penalization, with $p \approx n$, \cite{DM13}  utilized the  techniques of Approximate Massage Passing (AMP) and discovered the same  Gaussian component,  in the $\hat x(\lambda)$.    The advantage of the AMP framework is that it provides an exact asymptotic expression of the asymptotic mean squared error of the   estimator   instead of an upper bound. Here, we theoretically   investigate the  applicability of the AMP techniques when $p \geq n$ and the loss function is not-necessarily least squares or differentiable. For the case of the least squares loss
 with  $p \geq n$,
\cite{BM12}  make a strong connection between the  penalized least squares and    the  AMP algorithm of \cite{DMM09}. 
    However, the AMP algorithm of \cite{BM12} cannot recover the signal when the distribution of the noise is arbitrary.  For this settings, we design  a new, robust and sparse Approximate Message Passing (RAMP) algorithm.

 The proposed RAMP algorithm is not the first algorithm to consider improving the AMP framework as a means of  adapting to the problem of different loss function; however, it is the first that simultaneously  allows  shrinkage in estimation. \cite{DM13} propose  a three-stage AMP algorithm  that matches  the classical M-estimators; however,  it merely applies to the  $p  \leq n$ case. When $p>n$, the second step of their algorithm fails to iterate and the other two stages do not match with \eqref{eqDef}.   Our proposed algorithm belongs to the general class of   first-order approximate massage passing algorithms.  However, in contrast to the existing methods  it  has three-steps. It has iterations that are  based on gradient descent with  an objective that is scaled and min regularized  version of the original loss function $\rho$.  Moreover, it allows non-differentiable loss functions.  The three--step estimation method of RAMP is no longer a simple  proxy for the one-step M estimation. Due to high dimensionality with $p \geq n$, such a step is no longer adequate. Our proof technique leverages 
 the powerful technique of the AMP proposed  in \cite{BM11}; however, we
require a more refined analysis here in order to extend the results to one involving  non-differentiable and robust  loss functions while simultaneously allowing  $p \geq n$. We relate the proposed algorithm to the penalized M estimators when $p \gg n$ and show that a solution to one may lead to the solution to the other.
 We show its convergence while allowing non-differentiable loss functions and  $p, n, s \to \infty$, with $n/p \to \delta \in (0,1)$ and $n/s \to a  \in (1,\infty)$. This enabled us to derive the  AMSE of a general class of $l_1$ penalized M-estimators and to study their relative efficiency.
 
  We show that the AMSE depends on the distribution of the {\it effective score} and that it takes a form much different than the classical one, in that it  also depends on the sparsity parameter $s$. Moreover, we present a detailed  study of the relative efficiency of the penalized least squares method  and  the penalized least absolute deviation method. We discover regimes where one is more preferred than the other and that do not match classical  findings of Huber. 
Several important insights follow immediately:  relative efficiency is considerably affected by the model selection step; the most optimal loss function may no longer be the negative log likelihood function;  even  the sparsest high dimensional estimators have an additional Gaussian component in their asymptotic mean squared error that does not disappear asymptotically. 

We briefly describe the notation used in the paper.
We use   $\langle u\rangle \equiv \frac{\sum^{m}_{i=1}u_i}{m} $ to denote the average of the vector $u\in \mathbb R^m$.  Moreover,
if  given $f: \mathbb{R}\to\mathbb{R}$ and $v = (v_1,...,v_n)^T \in \mathbb{R}^m$, we define $f(v)\in\mathbb{R}^m\equiv (f(v_1),...,f(v_m))$. Moreover, its subgradient $f'(v)$ is taken coordinate-wise and is  $(f'(v_1),\dots,f'(v_m))$.
For   bivariate function $ f(u,v)$, we define $\partial_1 f(u,v) $ to be the partial derivate with respect to the first argument; similarly  $\partial_2 f(u,v)$, is the partial derivate with respect to the second argument.
We use $\| \cdot\|_1$ to denote $l_1$ and $\|\cdot\|_2$ to denote the $l_2$ norm. We define the sign function as sign$(v) =\mathbbm{1}\{v >0\}  -\mathbbm{1}\{v <0\}$, and zero whenever $v=0$. We use $\Phi$ and $\phi$ to denote the cumulative distribution function and density function of the standard normal random variable.

This paper investigates the effects of   the $l_1$ penalization  on robustness properties of the penalized estimators, in particular, how to incorporate bias induced by the penalization in the exploration of   robustness. We present a scaled, $\min$-regularized, not necessarily differentiable, robust loss functions  for  penalized M-estimation such that the corresponding approximate massage passing algorithm (RAMP) is adaptable to different loss functions and sparsity simultaneously.  Four examples of $\min$-regularized losses, that include the one of Least Absolute Deviation (LAD) and Quantile loss, are introduced in Section \ref{sec:loss}. The corresponding  algorithm is a modified form of AMP for robust losses which  offers offers a more general framework  over the standard AMP method and is discussed in Section \ref{sec:RAMP}. Section \ref{sec:theory} studies a number of important theoretical results concerning the RAMP algorithm as well as its convergence properties and its connections to the penalized M-estimators. Section \ref{sec:RE} studies Relative Efficiency and establishes lower bounds for the AMSE. Moreover, this section also presents  results on relative efficiency of penalized least absolute deviations (P-LAD) estimator with respect to the penalized least squares (P-LS) estimator. We find that P-LS is preferred over P-LAD when the error distribution is ``light-tailed" with a new breakdown point for which the two methods are indistinguishable; furthermore, we find that P-LS is never preferred over P-LAD when the error distribution is ``heavy-tailed". Section \ref{sec:examples} contains detailed numerical experiments on a number of RAMP losses, including LS, LAD, Huber and a number of Quantiles, and a number of error distribution, including normal, mixture of normals, student.    In \ref{sec:tuning}- \ref{sec:con}, we demonstrate both how to use RAMP method in practice, and analyze its finite sample convergence properties.  The second subsection involves study of state-evolution equation whereas the third subsection involves the study of the AMSE.  In both studies, we find that the RAMP works extremely well.  In \ref{sec:error}-\ref{sec:design}, we demonstrate good properties of the RAMP algorithm with varying error distribution and the distribution of the design matrix $A$. Lastly, in \ref{sec:RE1} we present analysis of relative efficiency between P-LS and P-LAD estimators where we consider both $p \leq n$ and $p \geq n$. We demonstrate that the results of RAMP for $p\leq n$ match those of \cite{B13a} and \cite{K13} whereas for $p \geq n$ the results establish new patterns according to Section \ref{sec:RE}.

\section{$\min$-Regularized Robust Loss Functions}\label{sec:loss}

We consider the loss function $\rho : \RR \rightarrow \RR_{+}$  to be a non-negative  convex function with subgradients $\rho'$, defined as 
\[
\rho'(x) = \left\{ y| \rho(z) \geq \rho(x) + y (z-x), \mbox{ for all } z\in \RR \right\}.
\] If $\rho$ is differentiable, $\rho'$ represents the first derivative of $\rho$. 
%
Our assumption includes some interesting cases, such as  least squares loss, Huber loss, quantile loss and  least absolute deviation loss.

Similarly to \cite{DM13} and \cite{K13} we use $\min$ regularization to regularize the squared loss with the robust loss $\rho$.
This introduces the family of regularizations  of  the robust loss $\rho$ as  follows:
\begin{equation}
\rho(b,z) = \underset{x\in\mathbb{R}}{\operatorname{min \   }}\left\{b\rho(x)+\frac{1}{2}(x-z)^2\right\}.
\end{equation}
This family is often named a Moreau envelope or Moreau-Yosida regularization. The Moreau envelope is continuously
differentiable, even when $\rho$ is not. In addition, the sets of minimizers
of $\rho$ and $\rho(b,z)$ are the same.
Related to the family of the regularized
  loss functions $\rho(b,z)$ is  
 the proximal mapping operator of the functions $\rho(b,z)$,   defined as:
\begin{equation}\label{eq:prox}
Prox(z,b) = \underset{ x \in \RR } { \mbox{arg  min    }}
\left\{b\rho(x)+\frac{1}{2}(x-z)^2\right\}.
 \end{equation}
 For all convex and closed losses $\rho$,  the operator $Prox(z,b)$ exists for all $b$ and  is unique for big enough $b$ and all $z$.
 Moreover, it 
  admits the subgradient characterization; if   $Prox(z,b)=u$ then 
  $$ z-u \in b\rho'(u). $$
The proximal mapping operator is widely used in  non-differentiable  convex optimization   in defining proximal-gradient methods. The parameter $ b$ controls
the extent to which the proximal operator maps points towards the
minimum of $\rho$, with smaller values of $b$ providing a smaller movement towards
the minimum. Finally, the fixed points of the proximal operator of $\rho$ are precisely
the minimizers of $\rho$; for appropriate choice of  $b$,  the proximal
minimization scheme  converges to the   optimum of $\rho$,  with least geometric and possibly superlinear
rates (\cite{BT99}; \cite{LT95}).
For each $\rho(b,z)$,   we define a corresponding {\it effective score} function as:
 \begin{equation}
\Phi(z; b)= b\rho'(Prox(z,b)). 
\end{equation}

The score functions   $\Phi(z;b)$  were used in \cite{DM13}, for two times differentiable losses $\rho$,
 to define  a new iteration step in the robust Approximate Message  Passing algorithm   therein. 
 In this paper, we extend their method  for sparse estimation and discuss  loss functions $\rho$ that are not    necessarily differentiable  and those that do not necessarily satisfy restricted strong convexity condition \citep{N12}.  Important examples of such loss functions $\rho$ are  absolute deviation and quantile  loss, as they are neither differentiable nor do they satisfy restricted strong convexity condition. The extension is significantly complicated, as the set of fixed points of the proximal operator is no longer necessarily sparse; moreover,  trivial inclusion of the $l_1$ norm in  the definition \eqref{eq:prox} does not provide an algorithm that belongs to the Approximate Message Passing family or that converges to the penalized M-estimator \eqref{eqDef}.


In the rest  of this section, we provide examples of the effective score function above, with a number of different losses $\rho$. 

 \begin{example} \rm
[Least Squares Loss]
The least squares loss function is  defined as $
\rho(x)=\frac{1}{2}x^2$.
Setting the first derivative to be 0, the proximal map \eqref{eq:prox} satisfies
$bProx(z,b)+Prox(z,b)-z=0,
$
which simultaneously  provides  the proximal map and the effective score function
$$Prox(z;b) = \frac{z}{1+b}, \qquad \Phi(z;b) = \frac{b}{1+b}z.$$ 
\end{example}

\begin{example} \rm
[Huber Loss]
Let $\gamma>0$ be a fixed positive constant.
Huber's  loss function is defined as
\begin{equation}
\rho_H(x,\gamma)= \left\{\begin{array}{cc}
\frac{x^2}{2} &\text{if }|x|\leq \gamma\\
\gamma|x|-\frac{\gamma^2}{2}& \mbox{otherwise};
\end{array}
\right.
\end{equation}
hence, the family of loss functions depends on the new tuning parameter $\gamma$ and is defined as
\begin{equation}
\rho(z,b,\gamma)\equiv \underset{x \in\RR}{\operatorname{min \   }} \left\{b\rho_H(x,\gamma)+\frac{1}{2}(x-z)^2 \right\}.
\end{equation}
Thus, the effective score function  depends on  new parameter $\gamma$, so we use $\Phi(z;b,\gamma)$ to denote its value. Moreover,
we notice that whenever $| Prox(z,b)| \leq \gamma$, the proximal mapping operator takes on the same form as in the least squares case, i.e., it is equal to $z/(b+1)$.
In more general form, we conclude
\[
\rho'_H(Prox(z,b,\gamma)) = \text{ min } ( \text{max } (-\gamma,Prox (z,b,\gamma)),\gamma),
\]
 and with it that 
 \[
 Prox (z,b)= \left\{
 \begin{array}{cc}
 \frac{z}{1+b}, & |z| \leq (1+b)\gamma\\
 z-b\gamma, & z>(1+b)\gamma \\
 z+b\gamma, & z < - (1+b)\gamma
 \end{array}
 \right. .
 \]
Hence, the Huber effective score function
\begin{align}
\Phi(z;b,\gamma)&= 
 \left\{\begin{array}{cc}
\frac{bz}{1+b},&   |z| \leq (1+b)\gamma\\
b \gamma, &  z > (1+b)\gamma\\ 
-b \gamma & z<-(1+b)\gamma
\end{array}
\right..
\end{align}
 
\end{example}


\begin{example} \rm\rm
[Absolute Deviation Loss] 
The Absolute Deviation  loss function is defined as 
$\rho(x)=|x| $.
According to \eqref{eq:prox}, we observe that proximal mapping operator satisfies
$
b\rho'(Prox(z,b))+Prox(z,b)-z \in 0
$. We consider $Prox(z,b) \neq 0$ first. We observe that  $Prox(z,b)<0 $, when $z<-b$ and  $Prox(z,b)>0 $ when $z>b$.
This indicates that $\mbox{sign}(Prox(z,b))=\mbox{sign}(z)$. Substituting it in the  previous equation, we get $Prox(z,b)=z-b\mbox{ sign}(z).$
Next, we observe that when $Prox(z,b)=0$ we have ${\partial(b|x|)}/{\partial x}=b\xi$, where $\xi\in(-1,1).$ Substituting it in the proximal mapping equation, we get $z\in (-b,b)$.
Above all, we obtain
\begin{equation}
Prox(z,b)=\left\{\begin{array}{cc}
0,&z\in(-b,b)\\
z-b\mbox{ sign}(z),& \mbox{otherwise}
\end{array}
\right..
\end{equation}
Observe that the form above is equivalent to the soft thresholding operator.  Moreover, the Absolute Deviation effective  score function becomes, 
\begin{align*}
\Phi(z;b)&=
             \left\{\begin{array}{cc}
             z, &z\in(-b,b)\\
             b\mbox{{ sign}}(z),& \mbox{otherwise}
             \end{array}
             \right. .
\end{align*}
\end{example}

\begin{example} \rm
[Quantile Loss]
Let $\tau$ be a fixed quantile value and such that $\tau \in (0,1)$.
The quantile loss function is defined as 
$$\rho_\tau (x) = |x| \biggl( (1-\tau) 1\{ x<0\} + \tau 1\{x>0\} \biggl) = \tau x_+ +(1-\tau) x_-,$$
 for  $x_+ = \max\{x,0\}$ and $x_-=\min\{x,0\}$.  
  The family of $\min$ regularized loss function is  then defined as follows
 \[
 \rho(z,b,\tau)\equiv \underset{ x \in\RR}{\operatorname{min \   }} \left\{b \rho_\tau(x)+\frac{1}{2}(x-z)^2 \right\}.
 \]
Similarly, as before,
$
b\rho'(Prox(z,b))+Prox(z,b)-z \in 0
$.
Now, we first consider $Prox(z,b) \neq  0$, in which case we obtain 
\[
  \rho'(Prox(z,b))=  \mbox{sign} \left( Prox(z,b)  \right)   \biggl( (1-\tau) 1\{ Prox(z,b) <0\} + \tau 1\{Prox(z,b) >0\} \biggl).
\]
Next, we observe that when $Prox(z,b)=0$ we have ${\partial( \rho_\tau (x))}/{\partial x}=b\xi ((1-\tau) 1\{ x<0\} + \tau 1\{x>0\} )$, where $\xi\in(-1,1).$ Analyzing the positive and negative parts separately, we see that ${\partial( \rho_\tau (x))}/{\partial x} = b \tau \xi$ and ${\partial( \rho_\tau (x))}/{\partial x} = b (1- \tau) \xi$, respectively.
 Hence,  
\[
Prox(z,b)=  \left\{ 
\begin{array}{cc}
z-b\tau,& z>b\tau  \\
z-b( \tau-1),&z<b(\tau-1) \\
0,& \mbox{otherwise}
\end{array}
\right.  ,
\]
%
and with it that the Quantile score function becomes, 
\begin{align*}
\Phi(z; b,\tau)&=
             \left\{\begin{array}{cc}
             z,&z\in(b\tau-b,b\tau)\\
             b\tau ,& z > b\tau \\
             b(\tau-1) ,& z < b\tau-b 
             \end{array}
             \right. .
\end{align*}
 
\end{example}

In order to establish   theoretical properties, we will impose a number of conditions on the  density of the error term $W$, a class of robust loss functions $\rho$ and a design matrix $A$.   More precisely we impose the following conditions.\\

 {\bf Condition (R)}: Let $i=1,\dots, n$. The loss function $\rho$ is convex with sub-differential $\rho'$. It satisfies:
 \begin{itemize}
 \item[(i)] For all $u \in \RR$, $ \rho'(u)$ is an absolutely continuous function which can be decomposed as  
 \[
 \rho'(u)=\upsilon_1(u) + \upsilon_2(u) + \upsilon_3(u)
 \]
 where $\upsilon_1$ has an absolutely continuous derivative $\upsilon_1'$, $\upsilon_2$ is a continuous, piecewise linear continuous function, constant outside a bounded interval and $\upsilon_3$ is a nondecreasing step function. In more details, 
\begin{itemize}
\item 
$
 \upsilon_2'(u) = \kappa_\nu,  \qquad  \mbox{for } q_\nu < u \leq q_{\nu+1}, \qquad \nu =1,\cdots,k,
$

 for  $\kappa_0,\dots,\kappa_k  \in \RR$ with $\kappa_0=\kappa_k =0$ and $-\infty =q_0 < q_1 < \cdots<q_k <q_{k+1}=\infty,$ and $-\infty =\kappa_0 < \kappa_1 < \cdots<\kappa_k <\kappa_{k+1}=\infty.$
 \item
 $
 \upsilon_3(u) = \alpha_\nu,   \qquad  \mbox{for } r_\nu < u \leq r_{\nu+1}, \qquad \nu =1,\cdots,k,
$

 for  $\alpha_0,\dots,\alpha_k,  \in \RR$ with $\alpha_0=\alpha_k = 0$ and $-\infty =r_0 < r_1 < \cdots<r_k <r_{k+1}=\infty,$ and $-\infty =\alpha_0 < \alpha_1 < \cdots<\alpha_k <\alpha_{k+1}=\infty.$
 \end{itemize}
 \item[(ii)]  For all $u \in \RR$, $|\rho'(u)| \leq k_0$, where $k_0$ is   positive and bounded constant.

 \item[(iii)] The functional $h(t)=\int \rho(z-t)dF(z)$ has unique minimum at $t=0$.
 
 \item[(iv)] For some $\delta>0$ and $\eta >1$, $ \EE \left[ \sup_{|u|\leq \delta} | v_1^{''} (z+u)|  \right]^\eta  $ is finite; where, $v_1^{'}(z)=(d/dz) v_1(z)$ and $v_1^{''}(z)=(d^2/dz^2) v_1(z)$.

%
\end{itemize}
 The first Condition (i) depict explicitly the trade--off between the smoothness of $\rho$ and smoothness of $F$. This assumption covers the classical  Huber's and Hampel's  loss functions.
 Although we allow for not necessarily differentiable loss functions, we consider  a class of loss functions for which the sub-differential $\rho'$ is bounded. This lessens the effect of gross outliers and  in turn leads to many good robust properties of the resulting estimator. Least squares loss does not  satisfy this property but the AMP iteration with least squares loss has been studied in \cite{BM12} and its asymptotic mean squared error derived therein. For all other losses discussed above, this property holds.
The third Condition (iii), is to assure uniqueness of the population parameter that we wish to estimate.
The fourth Condition (iv)  is essentially a moment condition that holds, for example, if $v_1^{''}$ is bounded  and either $v_1^{''}(z)=0$ for $z<a$ or $z>b$ with $- \infty <a <b <\infty$, or $\EE |W|^{2 +\epsilon} < \infty$ for some $\epsilon >0$.\\

 {\bf Condition (D)}:   Let $W_1,\dots,W_n$ be i.i.d. random variables with the distribution function $F$.  
Let $F$ have two bounded derivatives $f$ and $f'$ and $f>0$ in a neighborhood of either $q_1,\cdots, q_k$ or  $r_1,\cdots, r_k$  appearing in Condition ({\bf R})(i) above. 
 \\
 
Although we assume  that the error terms $W_i$'s have bounded density, we allow for densities with possibly unbounded moments and we do not assume any a-priori knowledge of the density $f$.\\

 {\bf Condition (A)}: The design matrix $A$ is such that  $A_{ij}$ are i.i.d and follow Normal distribution $\mathcal{N}(0,1/n)$ for all $1 \leq i \leq n$ and $1 \leq j \leq p$. Moreover, the vector $x_0$ is  such that  its  empirical cumulative distribution function  converges weakly to a distribution $\PP_{x_0}$ as $p \to \infty$. Additionally $\PP_{x_0} \in \mathcal{F}_{\omega}$,    where $\mathcal{F}_{\omega}$ denotes the set of distributions whose mass at zero is greater than or  equal to $1-\omega$, for $\omega \in (0,1)$. \\

The class of distributions $\mathcal{F}_{\omega}$ has been studied in many papers \citep{BM12,DMM09,Z15}, it implies $\PP(x_0 \neq 0) \leq \omega$ and is considered a good model for exactly sparse signals.
 While
this setting is admittedly specific, the careful study of such matrix ensembles has a long tradition
both in statistics and communications theory and    is borrowed from the AMP formulation   \citep{BM12}. It simplifies the analysis significantly and can be relaxed if needed.  In particular,  it implies the Restricted Eigenvalue condition  \citep{B09}; that is, the design matrix $A$ is such that 
 \[
\kappa(s,c) = \min_{J \subset \{1,\dots p\}, |J| \leq s} \min_{v \neq 0, \|v_{J^c}\|_1 \leq c \|v_J\|_1} \frac{ \| A v\|_2}{ \sqrt{n} \|v_j \|_2} >0
 \]
with high probability, as long as the sample size $n$ satisfies $n > c' (1+ 8c)^2 s \log p/ \kappa(s,c)^2  $, for some universal constant $c'$.
 The integer $s$ here plays the role of an upper bound on the sparsity   of a
vector of coefficients $x_0$. Note that,  with $c \geq 1$, 
   the square submatrices of size $\leq 2s$ of the  matrix  $\frac{1}{n} \sum_{i=1}^n A_i A_i^T$ are necessarily
positive definite.  

\section{Robust Sparse Approximate Message Passing}\label{sec:RAMP}

We propose an algorithm called RAMP, for ``robust approximate message passing."  Our proposed algorithm is iterative and starts  from the  initial estimate $x^0=0\in\RR^p$ and guarantees a sparse estimator at its final iteration.  During iterations $t=1,2,3,\dots$ our algorithm applies a three-step procedure to update its estimate $  x^t\in\RR^p$, resulting in a new estimate $x^{t+1}\in\RR^p$.  We name the iteration steps as the Adjusted Residuals, the Effective Score and the Estimation Step.

We use the following notation  $\delta = n/p <1$ and  $\omega = \EE \| x_0 \|_0$ with $x_0\sim \PP_{x_0}$.  
We set  $\omega = s/p$.  We use
$G(z;b)$ to denote the rescaled, min regularized effective score function, i.e., $$ G(z;b)= \frac{\delta}{\omega} \Phi(z;b). $$
Let $\theta$ denote the nonnegative thresholding parameter  and let $\eta: \RR \times \RR_+ \to \RR $ be the soft thresholding function  
\[
 \eta(x,\theta) =  \left\{\begin{array}{ll}
                      x-\theta & \mbox{if $x>\theta$};\\
                       0          &  \mbox{if $-\theta \leq x \leq \theta$};\\
                    x+\theta &  \mbox{if $x<-\theta$}.
\end{array}\right.
\]

\vskip 5pt
\textbf{Adjusted Residuals}: Using the previous estimate $x^{t-1}$ and a current estimate $x^t$,  compute the adjusted residuals $z^t\in\mathbb{R}^n$
\begin{equation}\label{alg:alg3}
z^t =  Y-  A x^t+\frac{1}{\delta}G(z^{t-1};b_{t-1}) \left\langle \partial_1\eta \bigl(x^{t-1} +  
A^TG(z^{t-1};b_{t-1});\theta_{t-1}\bigl) \right\rangle.
\end{equation}
 We add a rescaled product to the ordinary residuals $Y-A x^t$, that explicitly depends on $n$, $p$ and $s$.   This step can be
recognized as proximal gradient descent \citep{BT09} in the   variable $x$ of the function $\rho$
using the step size  $\left\langle \partial_1\eta \bigl(x^{t-1} +  
A^TG(z^{t-1};b_{t-1});\theta_{t-1}\bigl) \right\rangle/\omega$.
\vskip 5pt
\textbf{Effective Score}: Choose the scalar $b_t$ from the following equation, such that the empirical average of the effective score $\Phi(z ;b )$ has the slope $s/n$,

\begin{equation}\label{alg:alg2}
\frac{\omega}{\delta}= \frac{1}{n}\sum_{i=1}^{n}\partial_1 \Phi(z^t_i;b_t).
\end{equation}

As $n/s >1$, for differentiable losses $\rho$ previous equation has at least one solution, as $ \Phi(z ;b )$ is continuous in $b$ 
and takes values of both $0$ and $\infty$. Whenever, $\partial_1 \Phi$ is not continuous, the solution can be 
  defined uniquely in the form
\[
b_t = \frac{1}{2} (b_t^+ + b_t^-)
\]
where 
$b_t^+ = \sup \{d >0:   \frac{1}{n}\sum_{i=1}^{n}\partial_1 \Phi(z^t_i;d) > \frac{\omega}{\delta} \}$
and 
$b_t^- = \sup \{d <0:   \frac{1}{n}\sum_{i=1}^{n}\partial_1 \Phi(z^t_i;d) < \frac{\omega}{\delta} \}$.
For non-differentiable losses $\rho$, we consider two adaptations. First, we allow parameter $b_t$, which controls  the amount of $\min$ regularization of the robust loss $\rho$ function, to be adaptive with each iteration $t$. Second, we consider a population equivalent of the \eqref{alg:alg2} first,   then  design an estimator of it and solve the fixed point equation.
In more details,

\begin{equation}\label{alg:alg2a}
\frac{\omega}{\delta}=  \hat \nu (b_t),
\end{equation}
for  a consistent estimator $\hat \nu = \hat \nu (b_t)$ of a population parameter $\nu$ defined as 
\[
 \nu (b_t)=
\partial_1\EE \left[ \Phi(z^t;b_t) \right].
\]
The advantage of this method is to avoid numerical challenges arising from solving a fixed point equation of a  non-continuous function. A particular form of $\hat \nu$ depends on the choice of the loss function $\rho$ and the density of the error term $f_W$. We discuss examples in the Section~\ref{sec:2}.


\textbf{Estimation}: Using the regularization parameter $b_t$ determined by the previous step,   update the estimate $x^t$ as follows,
\begin{equation}\label{alg:alg1}
x^{t+1}=\eta\left(x^t+A^TG(z^t;b_t);\theta_t \right),
\end{equation}
with the soft thresholding function $\eta$.

The estimation step of the algorithm   introduces the necessary thresholding step needed for inducing sparsity in the estimator \citep{BM12}. However, in contrast to the existing methods it is adjusted with the appropriately scaled, min regularized robust score function $ \delta \Phi/ \omega$. The three--step estimation method of RAMP is no longer a simple  proxy for the one-step M estimation. Due to high dimensionality with $p \geq n$, such a step is no longer adequate. Instead, we work with its soft thresholded alternative to ensure approximate sparsity of each iterate. Furthermore,  the residuals require additional scaling, i.e., we multiply the  scaling  of \cite{DM13} with a factor proportional to the fraction of sparse elements of the current iterate, in other words,   $\left\langle \partial_1\eta \bigl(x^{t-1} + A^T \frac{\delta}{\omega} \Phi (z^{t-1};b_{t-1});\theta_{t-1}\bigl) \right\rangle$ (see Lemma \ref{lemma:lemma1} below). 
Rescaling of $\delta/\omega$  in the above term  is absolutely necessary, is an effect of the regularization $b_{t-1}$ and will be absorbed by $\theta_{t-1}$.
This rescaling  is needed to prove connection with the general AMP algorithms of \cite{BM11}.
In  existing AMP algorithms,  this scaling  does not appear as a special case  of least squares loss for which it gets canceled with a constant in $\Phi$.  
 
 In the following we present a few examples of RAMP algorithm for different choices of the loss function $\rho$.
 
\begin{example}
[Least squares Loss (continued)]
Notice that  $\Phi'(z;b)=\frac{b}{1+b}$. Then, \eqref{alg:alg2} returns  the value of the $\min$-regularizer  $b$
of  $\frac{s}{n-s}$. For the particualar choice of $b$, we see that the rescaled effective score $G(z;b) =z$, in which case the RAMP algorithm is equivalent to the AMP of \cite{BM12}.
\end{example}

 \begin{example}
[Huber Loss (continued)] 

Following the definition of $\Phi(z;b,\tau)$ we obtain
$$ \EE  \Phi(z;b,\tau) = \frac{b}{1+b}  \EE \left[ z \mathbbm{1}(|z| \leq (1+b)\gamma  )\right]  + b \gamma  \PP(z > (1+b) \gamma)  -b\gamma \PP(z<  -(1+b)\gamma).$$  Additionally, Condition $(\Rb)$, guarantees that  $\nu = \partial_1 \EE  \Phi(z;b,\gamma)$. 
Therefore, 
\[
\nu
=  \frac{b}{1+b} F_z((1+b)\gamma ) - \frac{b}{1+b} F_z(-(1+b)\gamma )
-b \gamma f_z((1+b)\gamma ) -b\gamma f_z(-(1+b)\gamma ), 
\]
for $F_z, f_z$ denoting the distribution and density functions of the residuals $z$. Given a  sample of adjusted residuals $z_1^t,\dots, z_n^t$, provided by \eqref{alg:alg3}  at any iteration $t$, we can easily formulate an empirical distribution function $ \hat F _z^t$ and a density estimator $\hat f_z^t$, using any of the standard non-parametric tools.  Then, for any fixed $ \gamma >0$, $b_t$ is a solution to an implicit function equation \eqref{alg:alg2a}
\[
s/n =   \frac{b}{1+b}  \hat F _z^t((1+b)\gamma ) - \frac{b}{1+b}  \hat F _z^t(-(1+b)\gamma )
-b \gamma \hat f_z^t((1+b)\gamma ) -b\gamma \hat f_z^t(-(1+b)\gamma ).
\]

%
%
%

\end{example}
 
\begin{example}
[Absolute Deviation Loss (continued)] 

Since $ \EE  \Phi(z;b) =   \EE \left[ z \mathbbm{1}(|z| \leq b)\right] + b \PP(|z| >b)  $, Condition $(\Rb)$, guarantees that     $\nu = \partial_1 \EE  \Phi(z;b ) $
\[
\nu
=  F_z(b) - F_z(-b)
-bf_z(b) +bf_z(-b),
\]
for $F_z, f_z$ denoting the distribution and density functions of $z$. Given  a set of adjusted residuals $z_1^t,\dots, z_n^t$, provided by \eqref{alg:alg3}  at any iteration $t$, 
$b_t$ is a solution to an implicit function equation \eqref{alg:alg2a}
\[
s/n =  \hat F_z^t(b) - \hat F_z^t(-b)
-b \hat f_z^t(b) +b \hat f_z^t(-b).
\]

\end{example}

\begin{example}
[Quantile Loss (continued)] 

For the case of the quantile loss $\nu =  \EE\partial_1  \Phi(z;b,\tau)$.  Adding Condition {\rm ({\bf R})} to the setup, we obtain $\nu = \partial_1 \EE  \Phi(z;b,\tau)$. Narrowing the focus to $\EE  \Phi(z;b,\tau) $ we obtain
 $ \EE  \Phi(z;b,\tau) =   \EE \left[ z \mathbbm{1}(z \leq b \tau)\right] + \EE \left[ z \mathbbm{1}(z \geq b (\tau-1))\right]  + b \tau \PP(z >b\tau)   +b (\tau-1) \PP(z< b(\tau-1))$. Now, refining the equation for $\nu$ we obtain
\[
\nu
=  F_z(b \tau) - F_z(b(\tau-1))
-b\tau f_z(b \tau) +b(\tau-1) f_z(b(\tau-1)), 
\]
for $F_z, f_z$ denoting the distribution and density functions of $z$. Given  a set of adjusted residuals $z_1^t,\dots, z_n^t$, provided by \eqref{alg:alg3}  at any iteration $t$, 
and  a fixed $\tau \in(0,1)$, $b_t$ is a solution to an implicit function equation \eqref{alg:alg2a}
\[
s/n =  \hat F_z^t(b\tau ) - \hat F_z^t\bigl(b(\tau-1)\bigl)
-b\tau \hat f_z^t(b\tau) +b(\tau-1) \hat f_z^t\bigl(b(\tau-1)\bigl).
\]
\end{example}

In practice,
$\hat F_z^t(b\tau )$
typically takes the form of  an empirical cumulative distribution function
$
\hat F_z^t(b\tau )=\frac{1}{n} \sum_{i=1}^n \mathbbm{1}\{z_i^t \leq b \tau\}.
$
In contrast, there are numerous consistent estimators of $f_z(b \tau)$. For instance, by the asymptotic linearity results of Lemma \ref{lem:uniform-rho}, we consider
\begin{equation}\label{eq:nuhat}
\frac{1}{h\sqrt{n}} \sum_{i=1}^n \left[ \Phi(z_i^t+n^{-1/2}h;b\tau) - \Phi(z_i^t-n^{-1/2}h;b\tau) \right],
\end{equation}
for a bandwidth parameter $h>0$.
In practice, it is difficult to obtain estimators $\hat F_z^t(b\tau )$ and $\hat f_z^t(b\tau )$ that are continuous functions of $b$.
Hence, to solve the fixed point equations we implement a simple grid search and set $b$ to be the average of the  the first value of $b$ on the grid for which the estimated function  is bellow $s/n$ and the the last value of $b$ on the grid for which the estimated function is above $s/n$.

\section{Theoretical Considerations}\label{sec:theory}

In this section, we offer   theoretical analysis and  prove how is RAMP related to the $l_1$ penalized M-estimators and show the convergence property of the RAMP estimator.

\subsection{Relationship to penalized M-estimation}\label{sec:2}
The last term $\langle \partial_1\eta(A^TG(z^{t-1};b_{t-1})+x^{t-1};\theta)\rangle$
 in step 1 of RAMP iteration (equation (\ref{alg:alg3})) is  a correction of the residual, called Onsager reaction term. This term is generated from the theory of belief propagation in factor graphical models and the procedure of generation is shown in \cite{DMM11}. Adding the Onsager reaction term in each iteration is the main difference from AMP iteration and soft thresholding iteration. The intuition of this term in each step is considering undersampling and sparsity simultaneously. The following Lemma \ref{lemma:lemma1} shows the relationship between the Onsager reaction term and in Donoho's \citep{DMM09} term the undersampling--sparsity.

 \begin{lemma}\label{lemma:lemma1}
 
Let  $(z_*,b_*,\hat x_*)$ be a fixed point of the RAMP equations \eqref{alg:alg3}, \eqref{alg:alg2} and \eqref{alg:alg1}   having $b_*>0$.
According to the definition of $\eta(x)$, the   correction term $\langle \partial_1\eta(A^TG(z;b)+x;\theta)\rangle$ evaluated at the fixed point $(z_*,b_*,\hat x_*)$ is equal to ${\| \hat x_*\|_0}/{p}$, i.e.,to $\omega$.
\end{lemma}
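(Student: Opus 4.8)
The plan is to read the identity off the estimation step \eqref{alg:alg1} of RAMP evaluated at the fixed point, using only the explicit form of the soft-thresholding map $\eta$. At a fixed point, \eqref{alg:alg1} reads $\hat x_* = \eta\big(\hat x_* + A^T G(z_*;b_*);\theta_*\big)$ coordinatewise, where $\theta_*$ is the stationary value of the threshold parameter. Writing $u_* := A^T G(z_*;b_*) + \hat x_*$, the vector $u_*$ is exactly the argument fed to $\eta$ here, and it is also exactly the argument appearing inside $\partial_1\eta$ in the Onsager correction term $\big\langle \partial_1\eta(A^TG(z;b)+x;\theta)\big\rangle$ once $(z,b,x,\theta)$ are replaced by their fixed-point values. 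Hence it suffices to evaluate $\big\langle \partial_1\eta(u_*;\theta_*)\big\rangle$ and relate it to the support of $\hat x_*$.

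The key elementary fact is the coordinatewise identity, valid for every $\theta \ge 0$ and $v \in \RR$,
\[
\partial_1\eta(v;\theta) \;=\; \mathbbm{1}\{|v|>\theta\} \;=\; \mathbbm{1}\{\eta(v;\theta)\neq 0\},
\]
where at the two kinks $v = \pm\theta$ we use the subgradient selection $\partial_1\eta(\pm\theta;\theta):=0$; this lies in the subdifferential $[0,1]$ and is consistent with $\eta(\pm\theta;\theta)=0$, and in any case under Condition ({\bf A}) the Gaussianity of $A$ makes the law of each coordinate of $A^TG(z_*;b_*)$ absolutely continuous, so $|(u_*)_j| = \theta_*$ occurs with probability zero. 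Applying this with $v = (u_*)_j$ and invoking $\hat x_* = \eta(u_*;\theta_*)$ gives $\partial_1\eta\big((u_*)_j;\theta_*\big) = \mathbbm{1}\{(\hat x_*)_j \neq 0\}$ for every $j$. Averaging over $j = 1,\dots,p$ then yields
\[
\big\langle \partial_1\eta(A^TG(z_*;b_*)+\hat x_*;\theta_*)\big\rangle \;=\; \frac{1}{p}\sum_{j=1}^p \mathbbm{1}\{(\hat x_*)_j \neq 0\} \;=\; \frac{\|\hat x_*\|_0}{p},
\]
which is the asserted identity; the final equality with $\omega$ is then the algorithm's normalization $\omega = s/p$ together with the fixed point carrying the target sparsity level $\|\hat x_*\|_0 = s$. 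The hypothesis $b_* > 0$ serves only to discard the degenerate fixed point: if $b_* = 0$ then $G(z_*;0)\equiv 0$, the estimation step collapses to $\hat x_* = \eta(\hat x_*;\theta_*)$, forcing $\hat x_* = 0$ whenever $\theta_* > 0$, a case in which the statement is vacuous.

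I do not anticipate a genuine obstacle: the whole argument is the one-line identity $\partial_1\eta(v;\theta) = \mathbbm{1}\{\eta(v;\theta)\neq 0\}$ composed with the fixed-point equation. The only points deserving care are (i) fixing the convention for $\partial_1\eta$ at its kinks, handled by the subgradient selection or the absolute-continuity remark above, and (ii) being explicit about the sense in which $\|\hat x_*\|_0/p$ equals $\omega$ --- exact when the thresholds are tuned to the target support size, and otherwise an asymptotic identification consistent with the state-evolution analysis developed in the following subsections.
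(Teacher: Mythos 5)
Your proposal is correct and follows essentially the same route as the paper: both evaluate the fixed-point equation $\hat x_* = \eta(A^TG(z_*;b_*)+\hat x_*;\theta_*)$ coordinatewise and observe that the derivative of the soft-thresholding map is the indicator of the support, so the average equals $\|\hat x_*\|_0/p$. Your formulation via the one-line identity $\partial_1\eta(v;\theta)=\mathbbm{1}\{\eta(v;\theta)\neq 0\}$ is in fact slightly cleaner than the paper's intermediate step, and your explicit handling of the kinks $|v|=\theta$ is a welcome addition.
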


The following lemma shows the reason behind the  use of the effective score $\Phi(z;b)$ in the RAMP algorithm -- it connects the RAMP iteration with the penalized M-estimation. The penalized M-estimator, which is the optimal solution $\hat{x}(\lambda)$ of problem (\ref{eqDef}), satisfies the KKT condition:
\begin{equation*}\label{kkt}
  -\sum_{i=1}^n\rho'(Y_i-A_i^T\hat x(\lambda)) A_i - \lambda v = -X^T\rho'(Y-A \hat x (\lambda)) -\lambda v = 0
\end{equation*}
where $\rho'$ is applied component-wise. We will show in the following lemma that the estimator in the RAMP iteration with proper thresholding level  $\theta$ also satisfies the KKT condition  above  with the help of the rescaled, effective score function $G(z;b)$.

\begin{lemma}\label{the:thethe}

Let  $(z_*,b_*,\hat x_*)$ be a fixed point of the RAMP equations \eqref{alg:alg3}, \eqref{alg:alg2} and \eqref{alg:alg1},   having $b_*>0$. Then, $\hat x_*$ is a solution to the penalized M-estimator problem \eqref{eqDef} with  $ \lambda  =\frac{ \theta_* \omega }{b_*\delta}$. Vice versa, any minimizer $\hat x(\lambda)$ of the problem \eqref{eqDef} corresponds to one (or more) RAMP fixed points of the form $\left(z_*,\frac{ \theta_* \omega }{\lambda \delta},\hat x_* \right)$.
\end{lemma}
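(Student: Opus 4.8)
The plan is to eliminate the auxiliary variables $z_*$ and $b_*$ from the fixed-point system \eqref{alg:alg3}, \eqref{alg:alg2} (equivalently \eqref{alg:alg2a}) and \eqref{alg:alg1} by two elementary reparametrizations, and to show that what remains is exactly the KKT stationarity condition for \eqref{eqDef} with $\lambda = \theta_*\omega/(b_*\delta)$. Since $\mathcal{L}$ is convex, that condition is necessary and sufficient for global optimality, so both directions of the lemma follow. The first reparametrization is the subgradient characterization recorded after \eqref{eq:prox}: for every $z$ and every $b>0$, $z - Prox(z,b)\in b\,\rho'(Prox(z,b))$, and since $\Phi(z;b) = b\,\rho'(Prox(z,b))$ is defined through this very subgradient, $z = Prox(z,b) + \Phi(z;b)$. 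The second is that $\eta(\cdot,\theta)$ is the proximal operator of $\theta|\cdot|$, so $u = \eta(w,\theta)$ if and only if $w-u\in\theta\,\partial|u|$ componentwise. Throughout, recall $G(z;b) = (\delta/\omega)\Phi(z;b)$.

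\emph{Forward direction.} Let $(z_*,b_*,\hat x_*)$ be a fixed point with $b_*>0$. Evaluating \eqref{alg:alg3} at the fixed point and using Lemma~\ref{lemma:lemma1} to replace the Onsager term by $\omega$ gives $z_* = Y - A\hat x_* + (\omega/\delta)G(z_*;b_*) = Y - A\hat x_* + \Phi(z_*;b_*)$; together with $z_* = Prox(z_*,b_*) + \Phi(z_*;b_*)$ this forces $Prox(z_*,b_*) = Y - A\hat x_*$ and hence $\Phi(z_*;b_*) = b_*\,\rho'(Y - A\hat x_*)$ for the subgradient selection realized by the fixed point. Plugging this into \eqref{alg:alg1} via the soft-thresholding characterization yields $A^T G(z_*;b_*) = \frac{\delta b_*}{\omega}A^T\rho'(Y - A\hat x_*) \in \theta_*\,\partial\|\hat x_*\|_1$; dividing by $\delta b_*/\omega$ and using symmetry of $\partial\|\cdot\|_1$, this is precisely $-A^T\rho'(Y - A\hat x_*) - \lambda v = 0$ for some $v\in\partial\|\hat x_*\|_1$ with $\lambda = \theta_*\omega/(b_*\delta)$, i.e., the KKT condition for \eqref{eqDef}. (The Effective Score equation \eqref{alg:alg2} is not needed here.)

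\emph{Backward direction.} Let $\hat x_* = \hat x(\lambda)$ minimize \eqref{eqDef}; KKT provides a subgradient selection $\psi_*$, with $\psi_{*,i}\in\rho'(Y_i - A_i^T\hat x_*)$, such that $-A^T\psi_* = \lambda v$ for some $v\in\partial\|\hat x_*\|_1$. For a scalar $b>0$ set $z_*(b) := (Y - A\hat x_*) + b\,\psi_*$; then $z_*(b) - (Y - A\hat x_*) = b\psi_*\in b\,\rho'(Y - A\hat x_*)$, so $Prox(z_*(b),b) = Y - A\hat x_*$ and $\Phi(z_*(b);b) = b\psi_*$. The Effective Score equation \eqref{alg:alg2a} thereby becomes a scalar equation in $b$ alone, which by the continuity/range argument following \eqref{alg:alg2a} admits a solution $b_*$ (for non-differentiable $\rho$, understood through $b_* = \frac{1}{2}(b_*^+ + b_*^-)$ or through the population surrogate $\nu(b_*) = \partial_1\EE[\Phi(z^t;b_*)]$). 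Put $\theta_* := b_*\lambda\delta/\omega$, so $b_* = \theta_*\omega/(\lambda\delta)$. One then checks that $(z_*(b_*), b_*, \hat x_*)$ satisfies all three RAMP equations: \eqref{alg:alg2a} by construction; \eqref{alg:alg1} because $A^T G(z_*(b_*);b_*) = \frac{\delta b_*}{\omega}A^T\psi_* = -\theta_* v\in\theta_*\,\partial\|\hat x_*\|_1$, whence $\eta(\hat x_* + A^T G(z_*(b_*);b_*),\theta_*) = \hat x_*$; and \eqref{alg:alg3} because, using Lemma~\ref{lemma:lemma1} again, its right-hand side equals $(Y - A\hat x_*) + (\omega/\delta)\Phi(z_*(b_*);b_*) = (Y - A\hat x_*) + b_*\psi_* = z_*(b_*)$.

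\emph{Main obstacle.} The delicate part is the non-differentiable regime, where $\rho'$ is set-valued at finitely many points: one must (i) pin down a single subgradient selection that is consistent across \eqref{alg:alg3}, \eqref{alg:alg2} and \eqref{alg:alg1} --- handled above by propagating the selection $\psi_*$ coming from KKT --- and (ii) give meaning to the Effective Score step, where $\partial_1\Phi$ is discontinuous, which is why Conditions~({\bf R}) and ({\bf D}) are invoked to pass to the continuous population quantity $\nu$. A subsidiary bookkeeping point is that Lemma~\ref{lemma:lemma1} presupposes $\|\hat x_*\|_0/p = \omega$, so in the backward direction the reconstruction is to be read at the sparsity level $\|\hat x(\lambda)\|_0 = s = \omega p$; the per-loss computations of Sections~\ref{sec:loss}--\ref{sec:RAMP} show that the factor $\delta/\omega$ built into $G$ is exactly what makes this consistent.
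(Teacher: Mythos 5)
Your forward direction is, in substance, the paper's own argument: evaluate the adjusted-residual equation at the fixed point, invoke Lemma~\ref{lemma:lemma1} to replace the Onsager term by $\omega$, obtain $Y-A\hat x_*=z_*-\Phi(z_*;b_*)=Prox(z_*,b_*)$, and convert the estimation step's condition $A^TG(z_*;b_*)=\theta_* v$ into the KKT identity $A^T\rho'(Y-A\hat x_*)=\lambda v$ with $\lambda=\theta_*\omega/(b_*\delta)$. Where you go beyond the paper is the converse: the paper's written proof stops after the forward implication and never actually constructs a RAMP fixed point from a given minimizer $\hat x(\lambda)$, whereas your explicit reconstruction $z_*(b)=(Y-A\hat x_*)+b\psi_*$ with the KKT subgradient selection $\psi_*$, followed by the verification that $Prox(z_*(b),b)=Y-A\hat x_*$ and that all three RAMP equations hold, is the natural way to supply the missing half and is correct. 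Two small bookkeeping caveats, both of which you already flag: (i) the sign of $v$ in the KKT display of the paper is inconsistent with what its own proof derives (the proof concludes $A^T\rho'(Y-Ax)=\lambda v$, not $-\lambda v$), and your phrase ``using symmetry of $\partial\|\cdot\|_1$'' papers over this rather than resolving it --- on the support of $\hat x_*$ the subdifferential is a single sign, not symmetric, so you should fix one sign convention and carry it through; (ii) your use of Lemma~\ref{lemma:lemma1} in the backward direction implicitly requires $\|\hat x(\lambda)\|_0/p=\omega$, which is an assumption inherited from the paper rather than something either proof establishes. Neither point is a substantive gap relative to the paper's own level of rigor.
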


From the lemma above, we offer a relationship of tuning parameter $\lambda$ in penalized M-estimation with the threshold  parameter $\theta$ of  the RAMP iteration:
\[
\lambda = \frac{\theta\omega}{b\delta}.
\]

\subsection{State Evolution of RAMP}

State Evolution (SE) formalism introduced in  \cite{DMM10} and \cite{DMM09} is used to predict the dynamical behavior of numerous observables of the approximate message passing algorithms. In SE formalism, the asymptotic distribution of the residual and the asymptotic performance of the estimator can be measured while allowing $p \to \infty$. The parameter $\bar{\tau}_t^2$  can be considered as the state of the algorithm and it predicts whether the algorithm  converges or not. In more details,  the asymptotic mean squared error (AMSE), defined as 
\[
\mbox{AMSE} = \underset{p\to\infty}{\operatorname{lim }}\frac{1}{p}\sum_{i=1}^p(x_i^t-x_{0,i})^2,\
\]
 is a function of a state evolution parameter $\bar{\tau}_t^2. $
 We will show  that the proposed RAMP algorithm, which contains three steps,  belongs to a very general class of message passing algorithms. We will offer how to compute  $\bar{\tau}_t$, through a novel iteration scheme  that is adjusted for $p \geq n$
 and robust,  not necessarily differentiable losses $\rho$.

\begin{lemma}\label{lem:lem22}
Let Conditions $(${\bf R}$)$, $(${\bf D}$)$ and $(${\bf A}$)$ hold. Then,
the RAMP algorithm defined by the equations (\ref{alg:alg3}), (\ref{alg:alg2}) and (\ref{alg:alg1}) belongs to the general recursion of \cite{BM11}. 
Let  $\bar{\sigma}^2_0 = \frac{1}{\delta}\EE X^2_0$ and let  $x_0$ and  $W$ follow   density $p_{x_0}$ and $f_W$ respectively, where $\EE W^2=\sigma^2$. Let $Z$ be a standard normal random variable. Then, for all $t\geq0$ the state evolution sequence $\{\bar{\tau}^2_t\}_{t\geq0}$ of the RAMP algorithm is obtained by the following   iterative system of equations:
\begin{equation}\label{tau}
\bar{\tau}_{t}^2=\mathbb{E} \left[G(W+\bar{\sigma}_tZ;b_t)\right]^2,
\end{equation}
where
\begin{equation}\label{sigma}
\bar{\sigma}_t^2 = \frac{1}{\delta} \mathbb{E}\left[\eta(x_0+\bar{\tau}_{t-1}Z,\theta)-x_0)\right]^2.
\end{equation}
\end{lemma}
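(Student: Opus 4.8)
The plan is to recognize RAMP as a particular instance of the general first--order recursion of \cite{BM11} through an explicit change of variables, to verify the Lipschitz/moment hypotheses of that framework --- the only genuinely new work, since the loss may be non--differentiable and the regularizer $b_t$ is data--driven --- and then to read off \eqref{tau}--\eqref{sigma} from the state--evolution conclusion of \cite{BM11}, translated back through the change of variables. Write $Y=Ax_0+W$ and set
\[
q^t=x^t-x_0\in\RR^p,\qquad m^t=G(z^t;b_t)\in\RR^n,\qquad h^t=q^{t-1}+A^Tm^{t-1},\qquad \beta^t=W-z^t\in\RR^n,
\]
with side--information vectors $x_0$ and $W$, and coordinatewise nonlinearities $f_t(h;x_0)=\eta(x_0+h;\theta)-x_0$ and $g_t(\beta;w)=\tfrac{\delta}{\omega}\Phi(w-\beta;b_t)=G(w-\beta;b_t)$. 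Then the Estimation step \eqref{alg:alg1} is exactly $q^{t+1}=f_{t+1}(h^{t+1};x_0)$, $m^t=g_t(\beta^t;W)$, and the Adjusted Residuals step \eqref{alg:alg3} becomes $\beta^t=Aq^t-\lambda_t m^{t-1}$ with $\lambda_t=\tfrac1\delta\big\langle\partial_1\eta(x^{t-1}+A^TG(z^{t-1};b_{t-1});\theta)\big\rangle=\tfrac1\delta\langle f_t'(h^t;x_0)\rangle$, precisely the Onsager coefficient prescribed by \cite{BM11}. It remains to match the second line $h^{t+1}=A^Tm^t-\xi_tq^t$ with $\xi_t=\langle\partial_1 g_t(\beta^t;W)\rangle$: our definitions give $h^{t+1}=A^Tm^t+q^t$, so we need $\xi_t=-1$, and indeed $\xi_t=-\tfrac{\delta}{\omega}\langle\partial_1\Phi(z^t_i;b_t)\rangle=-1$ by the Effective Score equation \eqref{alg:alg2} (respectively \eqref{alg:alg2a} with the deterministic $b_t$ of the next paragraph). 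Thus the Effective Score step is exactly the calibration that lets the Estimation step carry \emph{no} explicit correction while remaining a legitimate \cite{BM11} update, and the three--step RAMP collapses onto the two--line recursion of \cite{BM11} with the natural initialization $x^0=0$, $z^0=Y$, i.e.\ $q^0=-x_0$, $m^{-1}=0$, $\beta^0=-Ax_0$.

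\emph{Regularity and the adaptive regularizer.} The maps $f_t(\cdot;x_0)$ are $1$--Lipschitz since $\eta(\cdot;\theta)$ is. For $g_t$ we use the identity $\Phi(z;b)=z-Prox(z,b)$, forced by the subgradient characterization $z-Prox(z,b)\in b\rho'(Prox(z,b))$ of Section~\ref{sec:loss}; since $z\mapsto Prox(z,b)$ is nondecreasing with a.e.\ slope in $[0,1]$ (firm nonexpansiveness of the proximal map), so is $z\mapsto z-Prox(z,b)$, hence $\Phi(\cdot;b)$ and $g_t(\cdot;w)$ are Lipschitz. Lipschitz continuity of $f_t,g_t$ and finiteness of the second moments required by \cite{BM11} then follow together with Conditions (\textbf{R})(ii) (boundedness of $\rho'$, hence of $\Phi$), (\textbf{D}) and (\textbf{R})(iv); Condition (\textbf{R})(i), which decomposes $\rho'$ into a smooth, a piecewise--linear and a step part, controls $\partial_1\Phi=1-\partial_1 Prox$ at its finitely many points of discontinuity so that the empirical slopes defining $\lambda_t,\xi_t$ converge along the state evolution. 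The empirical law of $(q^0,x_0)=(-x_0,x_0)$ converges by Condition (\textbf{A}) and $\EE W^2=\sigma^2<\infty$; these are the initialization requirements. Finally, \cite{BM11} needs deterministic nonlinearities, whereas $b_t$ in \eqref{alg:alg2}--\eqref{alg:alg2a} is data--dependent; we therefore introduce the deterministic sequence $\{b_t^\ast\}$ defined jointly with the state evolution by $\nu(b_t^\ast)=\omega/\delta$, $\nu(b)=\partial_1\EE[\Phi(W+\bar{\sigma}_tZ;b)]$ (existence from the monotonicity/range argument after \eqref{alg:alg2}, uniqueness by the midpoint convention), for which the above applies verbatim, and then show that the data--driven $b_t$ concentrates on $b_t^\ast$ using consistency of $\hat F^t_z,\hat f^t_z$ (equivalently of $\hat\nu$) for the state--evolution law of $z^t$ --- itself a consequence of the \cite{BM11} conclusion for the deterministic--$b$ recursion --- and continuity of $\nu$, an induction on $t$ propagating the equivalence; this parallels the adaptive--threshold treatment in \cite{BM12}.

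\emph{Reading off the state evolution.} Applying the main theorem of \cite{BM11} to the recursion identified above, and accounting for the orientation and scaling of $A$ (entries i.i.d.\ $\mathcal N(0,1/n)$, $A\in\RR^{n\times p}$, $n/p\to\delta$), the empirical laws of the coordinates of $\beta^t$ and of $h^{t+1}$ converge, jointly with $W$ and $x_0$, to $\bar{\sigma}_tZ$ and $\bar{\tau}_tZ$, where $\bar{\sigma}_t^2=\tfrac1\delta\,\EE[f_t(\bar{\tau}_{t-1}Z;X_0)^2]$ and $\bar{\tau}_t^2=\EE[g_t(\bar{\sigma}_tZ;W)^2]$, with $\bar{\sigma}_0^2=\tfrac1\delta\EE X_0^2$ from the initialization $q^0=-x_0$. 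Substituting $f_t(h;x_0)=\eta(x_0+h;\theta)-x_0$ gives \eqref{sigma}, and substituting $g_t(\beta;w)=G(w-\beta;b_t)$ together with $Z\stackrel{d}{=}-Z$ (independent of $W$) gives \eqref{tau}; the displayed AMSE formula follows since $\tfrac1p\|q^t\|^2\to\EE[f_t(\bar{\tau}_{t-1}Z;X_0)^2]$.

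\textbf{Main obstacle.} The substantive difficulty is the combination of regularity and adaptivity: \cite{BM11} is built for recursions with Lipschitz, a.e.\ differentiable nonlinearities and \emph{fixed} parameters, whereas here $\Phi=z-Prox$ has only piecewise--defined derivatives --- which is exactly where Conditions (\textbf{R})(i) and (\textbf{R})(iv) are essential, guaranteeing that the Onsager coefficients still converge to the population slopes $\langle f_t'\rangle,\langle g_t'\rangle$ --- and the regularizer $b_t$ is chosen from the data, forcing the separate concentration--plus--continuity argument above. Everything else is bookkeeping through the change of variables, the one clean structural fact being that the Effective Score calibration \eqref{alg:alg2} is precisely what makes $\xi_t=-1$, so that the additional Estimation step of RAMP is absorbed into the standard two--line message--passing recursion without spoiling membership in the class of \cite{BM11}.
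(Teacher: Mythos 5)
Your proposal is correct and follows essentially the same route as the paper: the same embedding of RAMP into the Bayati--Montanari recursion via $q^t=x^t-x_0$, $m^t=\pm G(z^t;b_t)$, $f_t(h)=\eta(x_0+h;\theta)-x_0$, $g_t(\beta;w)=\pm G(w-\beta;b_t)$ (your signs are the mirror image of the paper's, which takes $m^t=-G$ and gets $\xi_t=+1$ where you get $\xi_t=-1$), with the identical key observation that the Effective Score calibration \eqref{alg:alg2} forces the Onsager coefficient $\xi_t$ to be $\pm1$, followed by reading off \eqref{tau}--\eqref{sigma} from Theorem 2 of \cite{BM11} using $Z\stackrel{d}{=}-Z$. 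You are in fact somewhat more careful than the paper's own proof on two points it glosses over --- the Lipschitz continuity of $\Phi(\cdot;b)=\mathrm{id}-Prox(\cdot,b)$ via firm nonexpansiveness, and the replacement of the data-driven $b_t$ by a deterministic sequence $b_t^\ast$ plus a concentration argument --- both of which are legitimate gaps in the paper's one-line appeal to \cite{BM11}.
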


Notice that the function of $\bar{\sigma}_t$ and $\bar{\tau}_t$ depends on the distribution of true signal $p_{x_0}$, error distribution $F_W$ and a loss function; however, $\bar{\tau}_t$ and  $\bar{\sigma}_t$  do  not  depend on the design matrix $\Ab$. Therefore,   we believe that  the assumptions of the Gaussian design can be released.  

In more details, define the sequence $\bar \tau_t^2 $ by setting $\bar \sigma_0^2 = \frac{1}{\delta} \EE[x_0^2]$ for $x_0 \sim p_{x_0}$ and with it $\bar \tau_0^2 =  \frac{\delta^2}{\omega^2} \EE [\Phi(W - \bar \sigma_0 Z; b(\bar \tau_0^2))]$;   let $\bar \tau_t^2$ be defined as the solution to the iterative equations \eqref{tau} and \eqref{sigma}, i.e.
\[
\bar \tau_{t+1}^2 = \mathbb{V}(\bar \tau_t^2, b(\bar \tau_t^2),\theta (\bar \tau_t^2))
\]
for 
\[
\mathbb{V}(  \tau ^2, b ,\theta  ) = \frac{\delta^2}{\omega^2} \EE [\Phi(W +\sigma  Z; b )], \qquad 
  {\sigma}^2  = \frac{1}{\delta} \mathbb{E}\left[\eta(x_0+\tau Z,\theta)-x_0)\right]^2.
\]

\begin{lemma}\label{lem:uniquetau}
Let $\rho$ be a convex function and let  Conditions $(${\bf R}$)$, $(${\bf D}$)$ and $(${\bf A}$)$ hold. 
For any $\sigma^2 >0$ and $\alpha > \alpha_{\min}$, the fixed point equation 
\[
  \tau ^2 = \mathbb{V}\left(  \tau ^2, b(\tau^2),\alpha \tau\right)
\]
admits a unique solution $\tau^*=\tau^*(\alpha)$ for all smooth loss functions $\rho$. Moreover, 
$\lim_{t \to \infty} \tau_t = \tau^*(\alpha)$. Further, the convergence takes   place at any initial solution and is monotone.
Additionaly, for all non-smooth loss functions the fixed point equation above, admits multiple solutions $\tau^*=\tau^*(\alpha)$.  In such cases,  the convergence take place but it  depends on the initial solution and   is monotone for each initialization.
\end{lemma}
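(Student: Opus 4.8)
The plan is to collapse the coupled state--evolution recursion \eqref{tau}--\eqref{sigma} to the iteration of a single scalar map and then to study that map through monotonicity and concavity. By Lemma~\ref{lem:lem22} the recursion can be written as $\tau_{t+1}^2=\Psi(\tau_t^2)$ with
\[
\Psi(\tau^2)=\mathbb V(\tau^2,b(\tau^2),\alpha\tau)=\frac{\delta^2}{\omega^2}\,\EE\bigl[\Phi(W+\sigma(\tau^2)\,Z;\,b(\tau^2))^2\bigr],\qquad
\sigma^2(\tau^2)=\frac1\delta\,\EE\bigl[(\eta(x_0+\tau Z,\alpha\tau)-x_0)^2\bigr],
\]
where $Z\sim\mathcal N(0,1)$ and $b(\tau^2)$ is the reweighting returned by the effective--score step \eqref{alg:alg2}--\eqref{alg:alg2a}, i.e.\ the solution of the population identity $\omega/\delta=\partial_1\EE[\Phi(W+\sigma(\tau^2)Z;b)]$. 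I would first record that $\Psi$ is well defined and finite: Condition ({\bf R})(ii) gives $|\Phi(\cdot\,;b)|\le b k_0$, so every expectation above is finite for fixed $b$, and Conditions ({\bf R})(i,iv) and ({\bf D}) justify differentiation under the integral, so the slope--matching equation has a solution $b(\tau^2)$, which for smooth $\rho$ is continuously differentiable in $\tau^2$ by the implicit function theorem.

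Next I would establish the two structural facts that drive the proof. \emph{Monotonicity}: $\tau^2\mapsto\sigma^2(\tau^2)$ is nondecreasing because the soft--threshold denoiser risk increases with the noise level, and $\varsigma^2\mapsto\frac{\delta^2}{\omega^2}\EE[\Phi(W+\varsigma Z;b(\varsigma^2))^2]$ is nondecreasing as well; to see the latter one differentiates the energy $\EE[\Phi^2]$ using Gaussian integration by parts (Stein's identity) and controls $db/d\varsigma^2$ by implicitly differentiating the constraint $\EE[\partial_1\Phi(W+\varsigma Z;b)]=\omega/\delta$, so that $\Psi$ is nondecreasing in $\tau^2$. \emph{Boundary behaviour}: $\Psi(0)=\frac{\delta^2}{\omega^2}\EE[\Phi(W;b(0))^2]>0$ since $W$ is nondegenerate, while for $\alpha>\alpha_{\min}$ one has $\limsup_{\tau^2\to\infty}\Psi(\tau^2)/\tau^2<1$ --- this is exactly what the threshold $\alpha_{\min}$ encodes (for $\alpha$ below it the thresholding risk grows too fast and no bounded fixed point exists), and it reduces to the corresponding statement for the LASSO state evolution in \cite{BM12} when $\rho$ is the squared loss, where $b(\cdot)$ is constant and $G(z;b)=z$ collapses $\Psi$ to $\sigma^2(\tau^2)+\EE W^2$.

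For smooth $\rho$ I would then prove the extra property that $\Psi$ is \emph{concave} in $\tau^2$: $\sigma^2(\tau^2)$ is concave (concavity of the soft--threshold risk in $\tau^2$, as in \cite{BM12}, using $\PP_{x_0}\in\mathcal F_\omega$), and $\varsigma^2\mapsto\EE[\Phi(W+\varsigma Z;b(\varsigma^2))^2]$ is concave and nondecreasing, so $\Psi$ is a composition of concave nondecreasing maps. Concavity together with $\Psi(0)>0$ forces $\Psi(\tau^2)/\tau^2$ to be strictly decreasing, equal to $+\infty$ at $0^+$ and strictly below $1$ at $\infty$; hence there is a unique $\tau^*(\alpha)^2$ with $\Psi(\tau^{*2})=\tau^{*2}$, and $\Psi'(\tau^{*2})<1$, with $\Psi(\tau^2)>\tau^2$ for $\tau^2<\tau^{*2}$ and $\Psi(\tau^2)<\tau^2$ for $\tau^2>\tau^{*2}$. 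Finally, since $\Psi$ is nondecreasing, from any $\tau_0^2$ the sequence $\tau_t^2$ is monotone --- increasing if $\tau_0^2<\tau^{*2}$ (then $\tau_0^2<\Psi(\tau_0^2)\le\Psi(\tau^{*2})=\tau^{*2}$, and induction) and decreasing if $\tau_0^2>\tau^{*2}$ --- bounded, hence convergent, and its limit, being a fixed point, equals $\tau^*(\alpha)^2$.

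For non-smooth $\rho$ the only thing that fails is the smoothness, and hence the concavity, of $\Psi$: now $b(\varsigma^2)$ is defined by the midpoint selection rule $b=\tfrac12(b^++b^-)$ (equivalently via the consistent estimator $\hat\nu$), which is continuous but need not be monotone or concave because $b\mapsto\partial_1\EE[\Phi(\cdot\,;b)]$ is non-monotone --- e.g.\ the LAD/quantile $\nu(b)=F_z(b)-F_z(-b)-bf_z(b)+bf_z(-b)$ of Section~\ref{sec:2} --- and this non-concavity propagates to $\Psi$. Monotonicity and the boundary behaviour survive, so $h(\tau^2):=\Psi(\tau^2)-\tau^2$ is positive at $0$ and tends to $-\infty$, hence has an odd number of sign changes but, lacking concavity, generically more than one, which I would exhibit explicitly (for instance for the LAD loss with a bimodal effective--residual density); monotonicity of $\Psi$ still forces the iterates to be monotone and bounded, hence convergent to a fixed point, but which one is reached is dictated by the position of $\tau_0^2$ relative to the sign changes of $h$, precisely the asserted initialization dependence. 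The step I expect to be the main obstacle is the monotonicity and (for smooth $\rho$) concavity of $\Psi$ in full generality: one must transfer these properties from the classical soft--threshold risk through the implicitly defined nonlinear reweighting $b(\varsigma^2)$ by differentiating the slope--matching identity $\EE[\partial_1\Phi(W+\varsigma Z;b)]=\omega/\delta$ and combining it with Gaussian integration by parts for $\EE[\Phi^2]$ --- a calculation with no analogue in the squared--loss case of \cite{BM12} (where $b$ is constant), and the very calculation that pinpoints why non-differentiable $\rho$ destroys concavity and thus uniqueness.
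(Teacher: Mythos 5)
Your proposal follows essentially the same route as the paper: both arguments rest on (i) monotonicity of the scalar state-evolution map in $\tau^2$, obtained by differentiating through the implicitly defined regularization $b(\tau^2)$ via the slope-matching identity $\EE[\partial_1\Phi(W+\sigma Z;b)]=\omega/\delta$ together with Gaussian integration by parts, (ii) concavity of the map for smooth $\rho$ and its failure for non-smooth $\rho$ traced to the step-function component of $\rho'$ (the paper's terms $T_1$ and $T_2$), and (iii) control of the slope at infinity through $\alpha$ and the results of \cite{BM12}. Your write-up of the final step --- how monotonicity, concavity, $\Psi(0)>0$ and sub-linear growth combine to give a unique fixed point and monotone convergence from any initialization --- is actually more explicit than the paper's, which asserts that conclusion from its parts (a)--(c) without spelling out the fixed-point argument; the substance is the same.
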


 The display above offers an explicit expression of how the additional Gaussian variable $Z$ effects  the fixed points $\tau^*$ and $\sigma^*$ and the  sequence $\{x^t\}$. In the case of a simple Lasso estimator, with $G$ being a rescaled least squares loss, $\bar \tau_t^2$ becomes $\sigma^2 + \delta^{-1}\mathbb{E} \left[\eta(x_0+\bar{\tau}_{t-1}Z,\theta)-x_0\right]^2$, similar to the result of \cite{BM11}. The paper \cite{DM13} considers non-penalized $M$ estimates with strongly convex loss functions -- this excludes   Least Absolute Deviation and Quantile loss, in particular. We provide  further details of the behavior of the fixed point $\tau^*$ in Section \ref{sec:examples} for the cases of non-differentiable loss functions. Moreover, we  relate  the properties of $\sigma^*$ to the  relative efficiency of $l_1$-penalized M-estimators in Section \ref{sec:RE}.

Next, we show  that at each iteration $t$, $x^t+A^TG(z^t;b_t)$ has the same distribution as $x_0+\bar{\tau}_{t-1}Z$. This enables us to provide the characterization of the {\it effective slope} of the algorithm. It measures the value of the $\min$-regularization parameter $b$, which satisfies the population analog of the Step 2 of the RAMP algorithm.

\begin{lemma}\label{lem:delta}
Let Conditions Let Conditions $(${\bf R}$)$, $(${\bf D}$)$ and $(${\bf A}$)$ hold. Let   $\bar \sigma$ be   a  stationary point  of the recursion \eqref{tau}-\eqref{sigma}  of the RAMP algorithm defined by equations (\ref{alg:alg3}), (\ref{alg:alg2}) and (\ref{alg:alg1}).  For all twice differentiable losses $\rho$,
\[
\frac{\omega}{\delta} = \EE \left[ \partial_1 \Phi \left( W+ \bar \sigma Z; b\right) \right],
\]
where $W$ and $Z$ have $F_W$ and $\mathcal{N}(0,1)$ distributions, respectively. 
Let $f_{C -\Phi(C;b)}$ denote the density of the random variable $C -\Phi(C;b)$ for $C=W - \bar \sigma Z$.  
Let the bandwidth, $h$, for the consistent  estimator $\hat \nu$, \eqref{eq:nuhat}, be such that $h \to 0$ and $n h \to \infty$. Then, for the non-necessarily differentiable losses $\rho$, 
\[
\frac{\omega}{\delta} = \EE \left[ \partial_1 v_1\left( W+ \bar \sigma Z; b\right) \right] + \EE \left[ \partial_1 v_2\left( W+ \bar \sigma Z; b\right) \right] + \sum_{\nu=1}^{k-1} \alpha_\nu b \left(f_{C -\Phi(C;b)} (r_{\nu+1})  - f_{C -\Phi(C;b)} (r_{\nu}) \right),
\]
where $v_1,v_2$ are defined in Condition~$\mathbf{(R)}$.
\end{lemma}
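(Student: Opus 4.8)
The plan is to read Lemma~\ref{lem:delta} as the population limit of Step~2 of RAMP (the Effective Score step), evaluated at a stationary point of the state evolution, and then to make that limiting identity explicit through the decomposition $\rho'=v_1+v_2+v_3$ of Condition~(R). I would begin by recording two preliminary facts. By Lemma~\ref{lem:lem22} the RAMP recursion is an instance of the general recursion of \cite{BM11}, so at a stationary point $\bar\sigma$ of \eqref{tau}--\eqref{sigma} the empirical distribution of the adjusted residuals $(z_i^t)_{i\le n}$ of \eqref{alg:alg3} converges, in the pseudo-Lipschitz sense delivered by that recursion, to the law of $C:=W+\bar\sigma Z$ with $W\sim F_W$ and $Z\sim\mathcal N(0,1)$ independent; and since $Z$ is Gaussian, $C$ has a density $g$ that is smooth and strictly positive, which is essentially the only place Condition~(A) is used. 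Second, writing $\Phi(z;b)=v_1(z;b)+v_2(z;b)+v_3(z;b)$ with $v_j(z;b):=b\,v_j(Prox(z,b))$, the subgradient characterisation of $Prox$ recorded after \eqref{eq:prox} gives $Prox(z,b)=z-\Phi(z;b)$, hence $C-\Phi(C;b)=Prox(C,b)$; this is what identifies the density $f_{C-\Phi(C;b)}$ appearing in the statement with the density of $Prox(C,b)$.

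For the smooth branch, when $\rho$ is twice differentiable the Moreau envelope is $C^2$, $Prox(\cdot,b)$ is $C^1$ with $\partial_1 Prox(z,b)=1/(1+b\rho''(Prox(z,b)))$, and therefore $\partial_1\Phi(z;b)=b\rho''(Prox(z,b))/(1+b\rho''(Prox(z,b)))$ is continuous and bounded in $[0,1)$. Step~2 in the form \eqref{alg:alg2} reads $\omega/\delta=\tfrac1n\sum_{i}\partial_1\Phi(z_i^t;b_t)$, so letting $p,n\to\infty$ at the stationary point ($b_t\to b$, the empirical law of the $z_i^t$ tending to $\mathcal L(C)$, and $\partial_1\Phi(\cdot;b)$ bounded and continuous so the limit passes through) yields the first displayed identity $\omega/\delta=\EE[\partial_1\Phi(W+\bar\sigma Z;b)]$.

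For the non-smooth branch Step~2 is used in the form \eqref{alg:alg2a}, $\omega/\delta=\hat\nu(b_t)$, with $\hat\nu$ the finite-difference statistic \eqref{eq:nuhat}. I would first show $\hat\nu(b_t)\to\nu(b):=\partial_1\EE[\Phi(C;b)]$, where, $\Phi(\cdot;b)$ no longer being differentiable, the right-hand side is understood via the smoothness of $g$, namely $\nu(b)=-\int\Phi(v;b)\,g'(v)\,dv$ after an integration by parts. This combines the uniform asymptotic linearity of $z\mapsto\sum_i\Phi(z_i^t+z;b)$ from Lemma~\ref{lem:uniform-rho}, which controls the increment in \eqref{eq:nuhat} uniformly over the shrinking $O(n^{-1/2})$ window and isolates its slope, with the state-evolution convergence of the empirical law of the $z_i^t$ to $\mathcal L(C)$; the bandwidth conditions $h\to0$ and $nh\to\infty$ are exactly what remove the finite-difference bias and the sampling variance of $\hat\nu$, respectively. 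This again gives $\omega/\delta=\partial_1\EE[\Phi(W+\bar\sigma Z;b)]$, now in the generalised sense.

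It then remains to make $\nu(b)$ explicit: split $\Phi(C;b)=v_1(C;b)+v_2(C;b)+v_3(C;b)$ and apply $\partial_1\EE[\cdot]$ termwise, the finite-difference limit being linear. For the $v_1$- and $v_2$-parts, $v_j(\cdot;b)$ is Lipschitz in $z$ (absolutely continuous derivative for $v_1$, piecewise linear and eventually constant for $v_2$, with Condition~(R)(i),(iv) supplying the domination), so $\partial_1\EE[v_j(C;b)]=\EE[\partial_1 v_j(C;b)]$, giving the first two terms. For the $v_3$-part, $v_3$ is a step function equal to $\alpha_\nu$ on $(r_\nu,r_{\nu+1}]$, so $\EE[v_3(Prox(C,b))]=\sum_\nu\alpha_\nu\,\PP(r_\nu<C-\Phi(C;b)\le r_{\nu+1})$ by the identity above; differentiating in a location shift of $C$ and evaluating the boundary terms against the (smooth) density of $C-\Phi(C;b)$ produces $\sum_\nu\alpha_\nu b\big(f_{C-\Phi(C;b)}(r_{\nu+1})-f_{C-\Phi(C;b)}(r_\nu)\big)$, which collapses to the stated range $\nu=1,\dots,k-1$ by the endpoint conventions on the $\alpha_\nu$ in Condition~(R)(i). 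Collecting the three contributions gives the second displayed identity. I expect the genuine difficulty to lie in this non-smooth branch: establishing $\hat\nu(b_t)\to\nu(b)$ while the perturbation in \eqref{eq:nuhat} shrinks at rate $n^{-1/2}$ and the residuals themselves are only asymptotically $W_i+\bar\sigma Z_i$ (this is why the \emph{uniform} version of Lemma~\ref{lem:uniform-rho} is indispensable), and then justifying the interchange of the location-shift derivative with the expectation for the step-function piece and matching its boundary terms with $f_{C-\Phi(C;b)}$ at the $r_\nu$; the smooth identity is by comparison little more than the continuous-mapping limit of \eqref{alg:alg2}.
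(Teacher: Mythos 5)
Your proposal is correct and follows essentially the same route as the paper: the smooth branch is the bounded pseudo-Lipschitz limit of Step 2 via the state evolution of Lemma~\ref{lem:lem22} (Theorem 2 of \cite{BM11}), and the non-smooth branch analyzes the finite-difference estimator \eqref{eq:nuhat} under $h\to 0$, $nh\to\infty$ after splitting $\rho'=v_1+v_2+v_3$, with the step-function part contributing through the density of $C-\Phi(C;b)=Prox(C,b)$ at the jump points $r_\nu$. The only differences are presentational (the paper routes the $v_3$ contribution through a CLT for the centered increment process $S_n^o(h)$ and a tightness argument, where you differentiate the shifted probabilities directly), and your sign convention $C=W+\bar\sigma Z$ versus the paper's $W-\bar\sigma Z$ is immaterial by symmetry of $Z$.
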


\subsection{Asymptotic mean squared error } \label{sec:amse}

In this section, we relate the state evolution properties of $\bar \tau_t$ and  $\bar \sigma_t$
with a    distance measure of $x_i^t$ and $x_0$.  Similarly to the existing literature on approximate message passing, the measure of distance is done through  a  pseudo-Lipschitz function $\psi$.   We say a function  $\psi:\RR^2 \to \RR$ is pseudo-Lipschitz if there exist a constant $L >0$ such that for
all $x,y \in \RR^2$: $|\psi(x) - \psi(y)|\leq L (1+\|x\|_2 + \| y \|_2) \| x-y\|_2$.

\begin{theorem}\label{the:the2}
Let   Conditions $(${\bf R}$)$, $(${\bf D}$)$ and $(${\bf A}$)$ hold and let $\psi: R \times R \rightarrow R$  be a pseudo-Lipschitz function.  Let $\{x^t \}_{t \geq 0}$ be a sequence of RAMP estimates, indexed by the iteration number $t$. Then, almost surely
\[
\underset{p\to\infty}{\operatorname{lim }}\frac{1}{p}\sum_{i=1}^{p}\psi(x^{t+1}_i,x_{0,i})= \EE [\psi(\eta(x_0+\bar{\tau}_tZ;\theta),x_0)],
\]
for all $\bar \tau _t$ and $\bar \sigma _t$ defined by the recursion \eqref{tau}-\eqref{sigma}.
\end{theorem}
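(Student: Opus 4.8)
The plan is to derive Theorem~\ref{the:the2} from the master state‑evolution theorem for general first‑order approximate message passing recursions of \cite{BM11}, once RAMP has been placed in that framework. By Lemma~\ref{lem:lem22} the RAMP iteration \eqref{alg:alg3}--\eqref{alg:alg1} is an instance of the abstract recursion of \cite{BM11}, driven by the two families of nonlinearities $z\mapsto G(z;b_t)$ acting on the residual/score coordinates and $x\mapsto\eta(x;\theta_t)$ acting on the signal coordinates, with the Onsager correction coefficient dictated by $\langle\partial_1\eta(\,\cdot\,;\theta_{t-1})\rangle$, whose role is pinned down by Lemma~\ref{lemma:lemma1}. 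The associated state‑evolution parameters are exactly $(\bar\sigma_t^2,\bar\tau_t^2)$ of \eqref{tau}--\eqref{sigma}, and under Condition~(A) the design has i.i.d.\ $\mathcal N(0,1/n)$ entries, which is precisely the ensemble to which the \cite{BM11} theorem applies; finite second moments of $x_0$ and $W$ needed there are supplied by Conditions~(A) and by $\EE W^2=\sigma^2<\infty$.

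First I would verify that both nonlinearities meet the Lipschitz hypotheses of \cite{BM11}. The soft‑threshold $\eta(\,\cdot\,;\theta)$ is globally $1$‑Lipschitz and piecewise linear. For the effective score, the subgradient characterization $z-Prox(z,b)\in b\rho'(Prox(z,b))$ yields the identity $\Phi(z;b)=z-Prox(z,b)$, and since $Prox(\,\cdot\,,b)$ is the proximal operator of a convex function it is non‑expansive; hence $G(z;b)=(\delta/\omega)\Phi(z;b)$ is Lipschitz in $z$ with a constant independent of $b$, and bounded by $(\delta/\omega)bk_0$ by Condition~(R)(ii). Condition~(R)(i)--(ii) further gives the absolutely‑continuous‑plus‑step decomposition of $\rho'$ and boundedness, which is what the Onsager term and the concentration of $\langle\partial_1\eta\rangle$ require. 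This step is where the extension past the smooth losses of \cite{DM13} is made explicit: nothing above uses differentiability of $\rho$.

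The main obstacle is that the per‑iteration parameters $b_t$ and $\theta_t$ are not fixed but are selected from the current iterate --- $b_t$ through the empirical equation \eqref{alg:alg2a} with the nonparametric estimators $\hat F_z^t,\hat f_z^t$, and $\theta_t$ through the chosen tuning rule --- whereas the abstract theorem assumes deterministic nonlinearities. I would resolve this by a decoupling argument carried out by induction on $t$, run simultaneously with the \cite{BM11} induction that produces the joint Gaussian description of $(z^1,\dots,z^t)$. At step $t$, the already‑established SE statement for $z^t$ (its empirical law converges to that of $W+\bar\sigma_tZ$), the consistency of $\hat\nu$ as in Lemma~\ref{lem:delta}, and the smoothness and strict monotonicity near the relevant root of the population map $b\mapsto\nu(b)$ guaranteed by Conditions~(R) and~(D), together give $b_t\to b(\bar\tau_t^2)$ and $\theta_t\to\theta$ almost surely; a standard stability/continuity estimate, using that $\eta$ is also $1$‑Lipschitz in its threshold argument and that $G(z;\cdot)$ is continuous uniformly on compacts, then shows that replacing the random $(b_t,\theta_t)$ by their deterministic limits perturbs the iterate by a vanishing amount in normalized $\ell_2$. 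Feeding the limiting parameters back into the \cite{BM11} conclusion closes the induction.

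Finally I would read off the displayed identity. By the \cite{BM11} conclusion at iteration $t$, for any pseudo‑Lipschitz $\phi:\RR^2\to\RR$, almost surely $\frac1p\sum_{i=1}^p\phi\bigl((x^t+A^TG(z^t;b_t))_i,x_{0,i}\bigr)\to\EE\,\phi(x_0+\bar\tau_tZ,x_0)$; that is, the pre‑threshold quantity $x^t+A^TG(z^t;b_t)$ is asymptotically distributed as $x_0+\bar\tau_tZ$ jointly with $x_0$. Taking $\phi(u,v)=\psi(\eta(u;\theta),v)$, which is pseudo‑Lipschitz because $\psi$ is pseudo‑Lipschitz and $\eta(\,\cdot\,;\theta)$ is $1$‑Lipschitz, and invoking the estimation step \eqref{alg:alg1} (with the passage from $\theta_t$ to $\theta$ absorbed into the decoupling step), the left side becomes $\frac1p\sum_{i=1}^p\psi(x^{t+1}_i,x_{0,i})$ and the right side becomes $\EE[\psi(\eta(x_0+\bar\tau_tZ;\theta),x_0)]$, which is the assertion of the theorem.
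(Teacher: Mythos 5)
Your proposal is correct and follows essentially the same route as the paper: invoke Lemma \ref{lem:lem22} to embed RAMP in the general recursion of \cite{BM11}, identify the pre-threshold quantity $x^t+A^TG(z^t;b_t)=x_0-h^{t+1}$ whose empirical law converges to that of $x_0+\bar\tau_tZ$, and then apply the \cite{BM11} state-evolution theorem to the composed test function $\psi(\eta(\cdot;\theta),\cdot)$, which remains pseudo-Lipschitz because $\eta$ is $1$-Lipschitz. Your additional care about the Lipschitz hypotheses and the data-dependent choice of $(b_t,\theta_t)$ addresses points the paper's own proof passes over silently, but it does not change the underlying argument.
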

Choosing $\psi(x,y) =(x-y)^2,$ we have the AMSE map, which can predict the success of recovering signals: 
\begin{equation}
\label{AMSE}
\mbox{AMSE}(x^t,x_0)=\underset{p\to\infty}{\operatorname{lim }}\frac{1}{p}\sum_{i=1}^{p}(x_i^t-x_{0,i})^2=\EE [\eta(x_0-\bar{\tau}^*_t Z;\theta)-x_0]^2.
\end{equation}
The display above presents the asymptotic mean squared error of the sequence of solutions to the RAMP algorithm. 
Next we connect this sequence of the RAMP algorithm    to the $l_1$-penalized M-estimator \eqref{eqDef}.  
We demonstrated that the estimator of RAMP is one of the optimal solution in  Lemma \ref{the:thethe}. In turn, we  measure the distance between the RAMP iteration and the penalized estimator. We use $L_2$ norm as the measurement of distance. 
\begin{theorem}\label{the:the1}
Let Conditions   $(${\bf R}$)$, $(${\bf D}$)$ and $(${\bf A}$)$ hold.
Let $\hat{x}$ be the penalized M-estimator  and  let $\{x^t\}$  be the sequence of estimates produced by the RAMP algorithm. Then, 
\[
\underset{t\to\infty}{\operatorname{lim }}\underset{p\to\infty}{\operatorname{lim }}\frac{1}{p}\|x^{t}-\hat{x}(\lambda)\|_2^2= 0,
\]
for all $\lambda>0$   for which $\|\hat{x}(\lambda)\|_2^2 /p$ is finite.
\end{theorem}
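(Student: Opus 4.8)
The plan is to follow the two-stage strategy that has become standard in the AMP literature \citep{BM12,DM13}: first show that the RAMP iterates settle (as $t\to\infty$, after $p\to\infty$) onto a configuration that approximately satisfies the KKT stationarity condition of \eqref{eqDef}, and then upgrade this approximate stationarity to a genuine $L_2$ bound using the restricted eigenvalue property guaranteed by Condition (A).

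First I would invoke Lemma \ref{lem:lem22}, which embeds RAMP in the general recursion of \cite{BM11}; this makes available the full state-evolution description, and in particular the joint asymptotic law of consecutive iterates $(x^t,x^{t+1})$ and $(z^t,z^{t+1})$. Applying Theorem \ref{the:the2} with the pseudo-Lipschitz test function $\psi(u,v)=(u-v)^2$ along the pair $(x^{t+1},x^t)$, together with Lemma \ref{lem:uniquetau} (monotone convergence $\bar\tau_t\to\bar\tau^*$ and the consequent convergence of $\theta_t\to\theta^*$ and $b_t\to b^*$), gives
\[
\lim_{t\to\infty}\lim_{p\to\infty}\frac{1}{p}\|x^{t+1}-x^t\|_2^2=0,
\qquad
\lim_{t\to\infty}\lim_{p\to\infty}\frac{1}{n}\|z^{t+1}-z^t\|_2^2=0.
\]
Hence the iterates are asymptotically a fixed point of \eqref{alg:alg3}--\eqref{alg:alg1}; Lemma \ref{lemma:lemma1} identifies the Onsager correction there with $\omega$, and Lemma \ref{the:thethe} then identifies that fixed point with a minimizer of \eqref{eqDef} at $\lambda=\theta^*\omega/(b^*\delta)$. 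Since by hypothesis $\|\hat x(\lambda)\|_2^2/p$ is finite, the calibrated state-evolution fixed point $\bar\tau^*$ is finite, so the thresholds can be chosen to hit the prescribed $\lambda$, and the limiting object is $\hat x(\lambda)$.

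The second, harder stage is to make this quantitative. I would show that the subgradient of the penalized objective $\mathcal{L}(\cdot)$ in \eqref{eqDef} evaluated at $x^t$ is small: rewriting the estimation step \eqref{alg:alg1} through the subgradient characterization of the soft-threshold map and combining it with the adjusted-residuals and effective-score identities, the KKT residual $-A^{T}\rho'(Y-Ax^t)-\lambda v^t$ has normalized $\ell_2$ norm controlled by a constant times $\frac{1}{\sqrt p}\|x^{t+1}-x^t\|_2+\frac{1}{\sqrt n}\|z^{t+1}-z^t\|_2$, which vanishes in the double limit by the previous display. To turn ``small subgradient'' into ``small distance'' without strong convexity of $\rho$, I would note that $x^t$ is approximately $s$-sparse (soft thresholding, with off-support mass shown to vanish as $t,p\to\infty$ from the state-evolution expression for $\bar\sigma^*$), that $\hat x(\lambda)$ is approximately sparse from the $\ell_1$ penalty, and that $x_0$ is $s$-sparse; therefore $x^t-\hat x(\lambda)$ lies, up to a negligible remainder, in the cone on which $\kappa(s,c)>0$, so $\frac{1}{n}\|A(x^t-\hat x(\lambda))\|_2^2\ \ge\ \kappa(s,c)^2\,\frac{1}{p}\|x^t-\hat x(\lambda)\|_2^2$. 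Feeding this restricted-strong-convexity inequality for the quadratic $\frac1n\|A\cdot\|_2^2$ into the first-order optimality inequality $\langle A^{T}\rho'(Y-A\hat x(\lambda))+\lambda\hat v,\ x^t-\hat x(\lambda)\rangle\le 0$ against the small-gradient bound at $x^t$, and using convexity (monotonicity of $\rho'$), closes the estimate and yields $\frac1p\|x^t-\hat x(\lambda)\|_2^2\to0$.

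The main obstacle is exactly this last conversion, since $\rho$ is permitted to be non-differentiable and not strongly convex (LAD and quantile losses), so one cannot invoke strong convexity of $\mathcal{L}$ directly; the remedy is to lean on the restricted eigenvalue property of Condition (A), which forces the needed curvature \emph{on the relevant cone}, after carefully bounding the off-support leakage of $x^t-\hat x(\lambda)$ and handling the set-valued step-function component $\upsilon_3$ of $\rho'$ (Condition (R)(i)) uniformly in $t$ and $p$. A secondary, more bookkeeping difficulty is the legitimacy of exchanging the order of the $p\to\infty$ and $t\to\infty$ limits, which I would justify from the uniformity in $t$ of the state-evolution convergence in Theorem \ref{the:the2} together with the monotone convergence of $\bar\tau_t$ in Lemma \ref{lem:uniquetau}.
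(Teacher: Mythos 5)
Your overall architecture matches the paper's at a high level: the actual proof also proceeds by (a) showing the subgradient of $\mathcal{L}$ at $x^t$ has $\ell_2$ norm $o(\sqrt p)$ and (b) converting approximate stationarity plus $\mathcal{L}(\hat x)\le\mathcal{L}(x^t)$ into an $\ell_2$ bound on $\hat x-x^t$ via a structural lemma (Lemma \ref{mainthem}, conditions (C1)--(C6)); your first stage is essentially the verification of (C3). However, your second stage has two genuine gaps. The first is the cone argument: you assert that $x^t-\hat x(\lambda)$ lies, up to a negligible remainder, in the restricted-eigenvalue cone $\|v_{J^c}\|_1\le c\|v_J\|_1$ because both vectors are approximately sparse. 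Approximate sparsity of each vector does not place their \emph{difference} in the RE cone relative to any fixed small index set, and the standard cone decomposition applies to $\hat x-x_0$, not to the difference of two approximate minimizers. The paper avoids this entirely: it uses condition (C4) (Lemma \ref{lem:cond4}, built from Lemmas \ref{lem:temp1} and \ref{lem:temp2}), a minimum-singular-value bound over submatrices indexed by the near-active set $S(c_2)=\{i:|v_i|\ge 1-c_2\}$ of the subgradient vector $v$, combined with the inequality $\mathcal{L}(x^t+r)\le\mathcal{L}(x^t)$ which forces the $\ell_1$ mass of $r$ off $S(c_2)$ to be small. That mechanism is different from RE and you would need to supply it, or an equivalent, to close the argument.

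The second gap is curvature. You propose to finish using the RE inequality for $\tfrac1n\|A\cdot\|_2^2$ together with ``convexity (monotonicity of $\rho'$).'' Monotonicity of $\rho'$ only gives a nonnegative inner product; for LAD and quantile losses it holds with equality on sets of positive measure, so it yields no quadratic lower bound on the objective increment in the direction $Ar$, and a lower bound on $\|Ar\|_2^2$ is then of no use because nothing in your chain links $\|Ar\|_2^2$ to $\mathcal{L}(x^t+r)-\mathcal{L}(x^t)$. The paper's remedy is Lemma \ref{lem:uniform-rho} together with condition (C6): in expectation over $W$ the increment $\sum_i[\rho(W_i-A_i^Tr)-\rho(W_i)]$ admits a uniform quadratic expansion with curvature constant $2\gamma=\EE v_1'(W)+\sum_\nu(\alpha_\nu-\alpha_{\nu-1})f_W(r_\nu)>0$; that is, for non-smooth losses the coercivity is supplied by the error density at the jump points of $\rho'$ (Condition (D)), not by the loss itself, and is then transferred to the sample level by the concentration bounds of Lemmas \ref{lemma:bound1} and \ref{lemma:bound2}. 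Without this expectation-level quadratic lower bound your argument cannot close precisely for the non-differentiable, non-strongly-convex losses that the theorem is designed to cover.
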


Based on Theorem $\ref{the:the1}$, we can further prove the following theorem to show the distance of penalized M-estimator $\hat{x}$ and the true parameter $x_0$. 
\begin{theorem}\label{thm:thm10}
Let Conditions $(${\bf R}$)$, $(${\bf D}$)$ and $(${\bf A}$)$ hold. Let $\hat{x}$ be the penalized M-estimator. Let $\psi: \mathbb{R} \times \mathbb{R} \to \mathbb{R}$ be a pseudo-Lipschitz function. Then,  
\[
\underset{p\to\infty}{\operatorname{lim }}\frac{1}{p}\sum_{i=1}^{p}\psi(\hat{x}_i(\lambda),x_{0,i})= \EE [\psi(\eta(x_0+\bar{\tau}^*Z;\theta),x_0)],
\]
for all $\lambda$ for which   $\| \hat{x}(\lambda)\|_2^2/p$ is finite and for $\tau^*$   a stationary point of the recursion \eqref{tau}-\eqref{sigma}.
\end{theorem}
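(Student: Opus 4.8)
The plan is to couple the penalized $M$-estimator $\hat x(\lambda)$ to the RAMP trajectory $\{x^t\}$ through the $L_2$-bound of Theorem~\ref{the:the1}, to evaluate the pseudo-Lipschitz average along the trajectory via Theorem~\ref{the:the2}, and then to send the iteration index $t\to\infty$ using the convergence of the state-evolution sequence from Lemma~\ref{lem:uniquetau}. Fix $\lambda>0$ with $\|\hat x(\lambda)\|_2^2/p$ finite, let $\theta$ be the threshold associated to $\lambda$ as in Lemma~\ref{the:thethe}, and let $\bar\tau^*$ be the stationary point of \eqref{tau}--\eqref{sigma} reached from the RAMP initialization $x^0=0$. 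For each fixed $t$, Theorem~\ref{the:the2} gives, almost surely,
\[
\lim_{p\to\infty}\frac1p\sum_{i=1}^p\psi(x_i^{t+1},x_{0,i})=\EE\bigl[\psi(\eta(x_0+\bar\tau_tZ;\theta),x_0)\bigr].
\]
By the triangle inequality it therefore suffices to show that $\frac1p\sum_{i=1}^p|\psi(\hat x_i(\lambda),x_{0,i})-\psi(x_i^{t+1},x_{0,i})|$ vanishes in the iterated limit $\lim_{t\to\infty}\lim_{p\to\infty}$, and that the right-hand side above converges as $t\to\infty$.

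For the first point, apply the pseudo-Lipschitz property of $\psi$ coordinatewise and then Cauchy--Schwarz:
\[
\frac1p\sum_{i=1}^p\bigl|\psi(\hat x_i(\lambda),x_{0,i})-\psi(x_i^{t+1},x_{0,i})\bigr|\le L\Bigl(\tfrac1p\sum_{i=1}^p\bigl(1+\|(\hat x_i(\lambda),x_{0,i})\|_2+\|(x_i^{t+1},x_{0,i})\|_2\bigr)^2\Bigr)^{1/2}\Bigl(\tfrac1p\|\hat x(\lambda)-x^{t+1}\|_2^2\Bigr)^{1/2}.
\]
The last factor tends to $0$ as $p\to\infty$ and then $t\to\infty$ by Theorem~\ref{the:the1}. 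The first factor is $O(1)$ uniformly in $p$ and $t$: $\frac1p\|x_0\|_2^2$ converges by Condition~(A), $\frac1p\|\hat x(\lambda)\|_2^2$ is finite by hypothesis, and applying Theorem~\ref{the:the2} with the pseudo-Lipschitz choice $\psi(x,y)=x^2$ gives $\lim_p\frac1p\|x^{t+1}\|_2^2=\EE[\eta(x_0+\bar\tau_tZ;\theta)^2]\le 2\EE[x_0^2]+2\bar\tau_t^2$, which is bounded uniformly in $t$ since $\{\bar\tau_t^2\}$ converges monotonically to $\bar\tau^{*2}$ by Lemma~\ref{lem:uniquetau}. Hence the whole right-hand side vanishes in the iterated limit.

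For the second point, Lemma~\ref{lem:uniquetau} gives $\bar\tau_t\to\bar\tau^*$; since $\eta(\cdot,\theta)$ is $1$-Lipschitz and $\psi$ is pseudo-Lipschitz, $|\psi(\eta(x_0+\bar\tau_tZ;\theta),x_0)|$ is bounded by an integrable envelope of the form $C(1+x_0^2+Z^2)$, uniformly in $t$ (using the monotone bound on $\bar\tau_t^2$), so dominated convergence yields $\EE[\psi(\eta(x_0+\bar\tau_tZ;\theta),x_0)]\to\EE[\psi(\eta(x_0+\bar\tau^*Z;\theta),x_0)]$. Combining the three estimates and taking $p\to\infty$ first, $t\to\infty$ second, proves the claim.

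The main obstacle is precisely the interchange of the limits $p\to\infty$ and $t\to\infty$: Theorem~\ref{the:the2} holds for each fixed $t$, whereas Theorem~\ref{the:the1} is an iterated limit, so one must guarantee that truncating the RAMP recursion at level $t$ commits an error that is small \emph{uniformly in $p$} for all large $p$. The Cauchy--Schwarz splitting reduces this to the second-moment bound $\frac1p\|x^{t+1}\|_2^2=O(1)$ holding uniformly in $t$, and securing this \emph{uniform} (rather than $t$-dependent) bound is the genuinely delicate step; it rests on the monotonicity of $\bar\tau_t$ from Lemma~\ref{lem:uniquetau}. A minor additional point is that the almost-sure exceptional sets in Theorems~\ref{the:the2} and~\ref{the:the1} can be chosen common across the countable family of indices $t$, which is routine.
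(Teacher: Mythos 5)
Your proposal is correct and follows essentially the same route as the paper: the paper's proof simply invokes the argument of Theorem 1.5 of \cite{BM12}, which is exactly this coupling of $\hat x(\lambda)$ to the RAMP trajectory via Theorem \ref{the:the1}, the fixed-$t$ limit from Theorem \ref{the:the2}, and the boundedness of $\|x^{t+1}\|_2^2/p$ and $\|\hat x\|_2^2/p$ to justify the interchange of limits. Your write-up just makes explicit the Cauchy--Schwarz splitting and the dominated-convergence step that the paper leaves implicit.
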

The right hand sides of Theorem \ref{thm:thm10} and Theorem \ref{the:the1} are equal. This guarantees that the AMSE$(x^t,x_0)$ and the AMSE$(\hat{x},x_0)$ are asymptotically the same, even if expressions for most of the loss functions are too complex to simplify. This  offers   not only an upper bound on AMSE$(\hat{x},x_0)$, but also an exact expression of it. 

\section{Relative Efficiency} \label{sec:RE}

The robustness properties of sparse, high-dimensional  estimators are difficult to quantify due to  shrinkage effects and subsequent  bias in estimation.  Whenever efficiency is defined though asymptotic variance, shrinkage is known to lead to super-efficiency phenomena. Relative efficiency can capture both the size of the bias and the variance together leading to a relevant robustness evaluation. We can say that one estimator    dominates  the the other, if its asymptotic mean squared error is smaller.
 
 State evolution of the RAMP algorithm provides a useful iterative scheme for computing the value of the Asymptotic Mean Squared Error.  According to Theorem  \ref{thm:thm10}, the asymptotic mean squared error of penalized $M$-estimators is 
\begin{equation*}\label{sigmaa}
 \mbox{AMSE} ({\bar \tau}_{t+1}^2,b({\bar \tau}_{t+1}^2),\theta_{t+1})=\delta \mathbb{E}\biggl[\eta\left(x_0+\bar{\tau}_{t}Z, \lambda\frac{\delta}{\omega}b_t \right)-x_0\biggl]^2,
\end{equation*}
  with the expectation taken with respect to $x_0$ and $Z$ and 
 \begin{align}\label{eq:tau-tau}
 \bar{\tau}_{t} ^2
=
 \frac{   {\mathbb{E} \biggl[\Phi ^2\left(W +\bar \sigma_{t } Z ;b_t\right)\biggl]}}{ \biggl[ \EE \left[ \partial_1  v \left( W+ \bar \sigma_t Z; b_t\right) \right] + \sum_{\nu=1}^{k-1} \alpha_\nu b_t \left(f_{C_t -\Phi(C_t;b_t)} (r_{\nu+1})  - f_{C_t -\Phi(C_t;b_t)} (r_{\nu}) \right) \biggl]^2},
\end{align}
where  $v$ denotes the continuous part of  $\rho'$, i.e., $v=v_1+v_2$; moreover, $f_{C_t -\Phi(C_t;b_t)}$ denotes the density of the random variable $C_t -\Phi(C_t;b_t)$ with $C_t=W - \bar \sigma_t Z$.  
Hence the high dimensional  asymptotic mean squared error mapping, allowing $p \geq n$,  is a sequence  $\{  \mathbb{AMSE} ({\bar \tau}_t^2,b({\bar \tau}_t^2),\theta_t)\}_{t \geq 0}$  produced by the above iterative scheme.

 Observe that $1/\delta = O(1/n)$ for all $p \leq n$ and $p$   does not grow  with $n$. In this setting, $\eta$ is the identity function and $\bar \sigma_t^2 = \bar \tau^2_{t-1}$ for all twice-differentiable losses $\rho$. Also,  in this setting,  the  asymptotic mean squared error  mapping above takes the form of variance mapping presented in  \cite{DM13}, after observing that the bias in estimation disappears. Specifically, when $p=o(n)$, we recover the result of  the above mentioned paper and identify the additional Gaussian component in the variance mapping.
 
 Cases  of $p \geq n$, are significantly  more complicated. We see that $Z$ component will never disappear as $1/\delta = p/n \geq 1$.  Moreover, bias in estimation will not disappear asymptotically. This indicates that studies of efficiency in high dimensions with $p \geq n$  never converge to the low-dimensional case, as was previously believed.  
 Even when the true model is truly sparse with $s \ll n$, the additional $Z$ component does not disappear; it has a substantial role in both the size of the asymptotic variance and the asymptotic bias.

\begin{theorem} \label{thm:information}
Suppose that $W$ has a well-defined Fisher information matrix $I(F_W)$.  
Let $\tau_t$ and $\sigma_t$ be the state evolution parameters  following equations \eqref{tau} and \eqref{sigma}, respectively.
  Then,  under conditions $(${\bf R}$)$, $(${\bf D}$)$ and $(${\bf A}$)$ 
\begin{enumerate}
\item[(i)] for every iteration $t$ of the RAMP algorithm  \eqref{alg:alg3}, \eqref{alg:alg2}, \eqref{alg:alg1}, state variable $\tau_t$ satisfies
\[
\tau_t^2\geq \frac{\omega}{\delta}  \frac{1 + \sigma_t^2 I(F_W) }{I( F_W ) },
\]
\item[(ii)] for the stationary solution $(\tau^*,\sigma^*)$ of the RAMP algorithm with all $\alpha \geq \alpha_{\min} >0$
\[
{\tau^*}^2\geq   \frac{s}{n-s}\frac{ 1}{I( F_W ) },
\]
\item[(iii)]   for fixed values of $\alpha$ and $x_0$, with $\theta=\alpha \tau$, there exist  functions  $\nu_1$, $\nu_2$ that are convex and increasing, respectively, and are such that the asymptotic mean squared error  mapping for high dimensional problems satisfies:
\[
\mbox{\rm AMSE} ({\tau^*}^2,b({\tau^*}^2),\alpha \tau^*)=  \nu_1(\tau) \tau^2  + \nu_2(\tau),
\]
with
\begin{align*}
\nu_1(\tau) = 1+ \alpha^2 &- \EE_{x_0}\left[\alpha^2 \left( \Phi(\alpha -   \frac{x_0}{\tau})  - \Phi(-\alpha -   \frac{x_0}{\tau})\right) \right.\\
 & \left. -   \left( \alpha +  \frac{x_0}{\tau}\right) \phi(\alpha -   \frac{x_0}{\tau})   - \left( \alpha -  \frac{x_0}{\tau}\right) \phi(-\alpha -   \frac{x_0}{\tau}) \right]
 \end{align*}
and
$\nu_2(\tau) =  \EE_{x_0} \left[ x_0^2 \left(  \Phi(\alpha -   \frac{x_0}{\tau})  - \Phi(-\alpha -   \frac{x_0}{\tau}) \right) \right]
$.

\end{enumerate}
\end{theorem}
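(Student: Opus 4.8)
The plan is to handle the three assertions separately: parts (i) and (ii) rest on a Cram\'er--Rao / information inequality applied to the state--evolution recursion, while part (iii) is a direct Gaussian computation.

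\emph{Part (i).} The two inputs are the state--evolution identity of Lemma \ref{lem:lem22}, namely $\bar\tau_t^2=\mathbb{E}\big[G(W+\bar\sigma_tZ;b_t)\big]^2=(\delta/\omega)^2\,\mathbb{E}\big[\Phi(C_t;b_t)^2\big]$ with $C_t:=W+\bar\sigma_tZ$, together with the effective--slope relation $\omega/\delta=\mathbb{E}\big[\partial_1\Phi(C_t;b_t)\big]$ --- the population form \eqref{alg:alg2a} of Step~2 evaluated along the state evolution (the residuals $z^t$ being asymptotically distributed as $C_t$), which is the iteration-$t$ analogue of Lemma \ref{lem:delta}. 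Since $C_t$ is the convolution of $W$ with a centered Gaussian, its density $f_{C_t}$ is smooth and strictly positive, and $\Phi(\cdot;b_t)$ is bounded by Condition~(R)(ii); I would integrate by parts to obtain $\omega/\delta=\mathbb{E}\big[\Phi(C_t;b_t)\,\ell_{C_t}(C_t)\big]$, where $\ell_{C_t}:=-f_{C_t}'/f_{C_t}$ is the score of $C_t$, so $\mathbb{E}[\ell_{C_t}(C_t)]=0$ and $\mathbb{E}[\ell_{C_t}(C_t)^2]=I(f_{C_t})$. Cauchy--Schwarz then gives $(\omega/\delta)^2\le\mathbb{E}[\Phi(C_t;b_t)^2]\,I(f_{C_t})$, hence $\mathbb{E}[\Phi(C_t;b_t)^2]\ge(\omega/\delta)^2/I(f_{C_t})$ and $\bar\tau_t^2=(\delta/\omega)^2\mathbb{E}[\Phi(C_t;b_t)^2]\ge 1/I(f_{C_t})$. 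Finally, Stam's convolution inequality for Fisher information, applied to $C_t=W+\bar\sigma_tZ$, yields $1/I(f_{C_t})\ge 1/I(F_W)+1/I(\mathcal{N}(0,\bar\sigma_t^2))=\big(1+\bar\sigma_t^2 I(F_W)\big)/I(F_W)$, which is already stronger than the stated bound since $\omega/\delta=s/n\in(0,1)$. For non--differentiable losses I would use the general form of Lemma \ref{lem:delta}, integrating by parts on the pieces $v_1,v_2$ of the decomposition in Condition~(R) and absorbing the jump terms $\sum_\nu\alpha_\nu b\big(f_{C-\Phi}(r_{\nu+1})-f_{C-\Phi}(r_\nu)\big)$; the monotonicity conditions on the $\alpha_\nu$ make those terms harmless.

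\emph{Part (ii).} Specializing part~(i) to a stationary point $(\tau^*,\sigma^*)$ gives $\tau^{*2}\ge 1/I(F_W)+\sigma^{*2}$. I would then close the recursion: by Lemma \ref{lem:lem22}, $\delta\,\sigma^{*2}=\mathbb{E}\big[(\eta(x_0+\tau^*Z,\theta)-x_0)^2\big]=\mathrm{AMSE}({\tau^*}^2,b({\tau^*}^2),\alpha\tau^*)$, and by part~(iii) this equals $\nu_1(\tau^*)\tau^{*2}+\nu_2(\tau^*)$ with $\nu_2\ge0$. The hypothesis $\alpha\ge\alpha_{\min}$ is exactly what renders the fixed point stable, i.e.\ $\nu_1(\tau^*)/\delta<1$, so the self--consistent inequality $\tau^{*2}\ge 1/I(F_W)+\delta^{-1}\big(\nu_1(\tau^*)\tau^{*2}+\nu_2(\tau^*)\big)$ can be solved for $\tau^{*2}$, giving $\tau^{*2}\big(1-\nu_1(\tau^*)/\delta\big)\ge 1/I(F_W)$ and hence $\tau^{*2}\ge\frac{\delta}{\delta-\nu_1(\tau^*)}\cdot\frac{1}{I(F_W)}$. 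The claim ${\tau^*}^2\ge\frac{s}{n-s}\frac{1}{I(F_W)}=\frac{\omega/\delta}{1-\omega/\delta}\frac{1}{I(F_W)}$ then reduces to the elementary inequality $\delta^2-2\delta\omega+\omega\,\nu_1(\tau^*)\ge0$, which I would verify using the lower bound on the noise--amplification coefficient $\nu_1(\tau^*)$ forced by the mass of $\mathbb{P}_{x_0}\in\mathcal F_\omega$ at zero (and, when needed, the extra nonnegative $\nu_2(\tau^*)$ term).

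\emph{Part (iii).} This is a computation. Put $u=x_0/\tau$ and $\theta=\alpha\tau$, and split $\mathbb{E}_Z\big[(\eta(x_0+\tau Z,\alpha\tau)-x_0)^2\big]$ over the three regions $\{x_0+\tau Z>\alpha\tau\}$, $\{|x_0+\tau Z|\le\alpha\tau\}$, $\{x_0+\tau Z<-\alpha\tau\}$; each resulting Gaussian integral $\int_a^\infty(z-\alpha)^2\phi(z)\,dz$ is elementary via $\int_a^\infty z^2\phi(z)\,dz=a\phi(a)+\bar\Phi(a)$ and $\int_a^\infty z\phi(z)\,dz=\phi(a)$, and produces the quantities $\Phi(\alpha\pm u)$, $\phi(\alpha\pm u)$. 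Multiplying through by $\tau^2$ and peeling off the $u^2$--term (which becomes $x_0^2$--weighted) from the $\tau^2$--homogeneous remainder yields the split $\mathrm{AMSE}=\nu_1(\tau)\tau^2+\nu_2(\tau)$ with the announced expressions, in particular $\nu_2(\tau)=\mathbb{E}_{x_0}\big[x_0^2\big(\Phi(\alpha-x_0/\tau)-\Phi(-\alpha-x_0/\tau)\big)\big]$. Monotonicity of $\nu_2$ and convexity of $\nu_1$ I would get by recognizing them, after the change of variables, as $\mathbb{E}_{x_0}$ of one--dimensional functions of $\tau$ with the stated shape: $\nu_2$ is increasing because $r\mapsto\Phi(\alpha-r)-\Phi(-\alpha-r)$ is monotone along $r=x_0/\tau$ on each sign of $x_0$, while the convexity of $\nu_1$ follows from the classical fact that the soft--thresholding risk is concave and increasing in the noise variance.

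\emph{Main obstacle.} I expect part~(ii) to be the hard step: fixing the exact constant $\frac{s}{n-s}$ requires pinning down the lower bound on the noise--amplification coefficient $\nu_1(\tau^*)$ for signals in $\mathcal F_\omega$ together with a precise reading of the stability condition hidden in $\alpha\ge\alpha_{\min}$ --- this is where the Cram\'er--Rao bound, the sparsity class, and the self--consistency of the state evolution all have to be reconciled. Part~(i) in the non--differentiable case also needs care to justify the integration by parts across the kinks of $\rho'$.
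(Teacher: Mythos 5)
Your parts (i) and (iii) track the paper's own proof closely: part (i) is the same Cram\'er--Rao/Cauchy--Schwarz argument applied to the score of the convolution $D=F_W\circ N(0,\sigma_t^2)$ followed by the Fisher-information convolution inequality $1/I(D)\geq 1/I(F_W)+\sigma_t^2$ (the paper routes this through Lemma 3.5 of Donoho--Montanari where you invoke Stam directly), and part (iii) is the same truncated-Gaussian/Stein computation with essentially the same derivative arguments for the shape of $\nu_1$ and $\nu_2$. Your remark that the Cauchy--Schwarz step actually yields $\tau_t^2\geq 1/I(D)$ without the $\omega/\delta$ prefactor is consistent with the normalization $\tau_t^2=(\delta/\omega)^2\,\EE[\Phi^2]$ of Lemma \ref{lem:lem22}, and it implies the stated bound since $\omega/\delta<1$.

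The genuine gap is in part (ii), where you depart from the paper and the argument does not close. You reduce the claim to $\delta^2-2\delta\omega+\omega\,\nu_1(\tau^*)\geq 0$, i.e.\ to the lower bound $\nu_1(\tau^*)\geq 2\delta-\delta^2/\omega$, and defer its verification to an unspecified consequence of $\PP_{x_0}\in\mathcal F_\omega$. No such bound is forthcoming: on the atom at zero, $\nu_1$ reduces to $\EE[(|Z|-\alpha)^2\mathbbm{1}\{|Z|\geq\alpha\}]$, which tends to $0$ as $\alpha$ grows, whereas the required lower bound tends to $\delta$ as $\omega\to\delta$ --- precisely the ``dense'' regime $s\approx n$ the theorem is meant to address. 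Your preliminary step $\nu_1(\tau^*)/\delta<1$ (``stability'') is likewise asserted, not derived from $\alpha\geq\alpha_{\min}$. The paper avoids $\nu_1,\nu_2$ entirely in part (ii): it substitutes $\sigma_t^2\geq\tau_{t-1}^2$ (justified near the relevant fixed point via the concavity/monotonicity of the soft-thresholding MSE map, Proposition 1.3 of Bayati--Montanari) into the bound of part (i), iterates $\tau_t^2\geq\frac{\omega}{\delta I(F_W)}+\frac{\omega}{\delta}\tau_{t-1}^2$, and sums the geometric series $\frac{\omega}{\delta}\bigl(1+\omega/\delta+(\omega/\delta)^2+\cdots\bigr)\to\frac{\omega/\delta}{1-\omega/\delta}=\frac{s}{n-s}$. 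Note that the $\omega/\delta$ prefactor you discarded in part (i) as slack is exactly the ratio of that geometric series, so the two parts are coupled: with your sharper version of (i), the same substitution would give the divergent recursion $\tau_t^2\geq 1/I(F_W)+\tau_{t-1}^2$, which is why you cannot simply splice your (i) into the paper's (ii) either; you would need to supply the missing quantitative lower bound on $\nu_1(\tau^*)$, tied explicitly to $\alpha_{\min}$ and $\omega/\delta$, for your self-consistency route to work.
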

  
 Recall that traditional lower bound of $M$-estimators with $p \leq n$ is $1/I(F_W)$ and is such that asymptotic mean squares error is equal to the variance and   is achievable for fixed $p$ and $n\to \infty$ asymptotics. From the display above, we observe that under diverging $p$ and $s$ and  $n$,  such that $p \gg n \geq s$, traditional lower  bound is not achievable for all $s \geq n/2$, i.e., for all   "dense" high dimensional problems. Hence, we observe a new phase transition regarding robustness in high dimensional and sparse problems. In the inequality above, the effect of sparsity is extremely clear. If the problem is significantly sparse, with $n /s < \infty$, then the traditional information  bound may be achieved, whereas for all  other problems  the traditional  information bound cannot be achieved, as there is inflation in the  variance.\\

{ \bf{Relative Efficiency of Penalized Least Squares

 and Penalized Absolute Deviations}}\\
 
 Next, we study the relative efficiency of the penalized least squares (P-LS from hereon)  estimator, with respect to the penalized least absolute deviation (P-LAD from hereon) estimator. From the results above, we can clearly compute the asymptotic mean squared error of the penalized methods as the recursive equations 
 \begin{align}
 \tau_{\mbox{\tiny P-LS}}^2 &= \sigma_W^2 + \sigma_{\mbox{\tiny P-LS}}^2,\\
  \tau^2_{\mbox{\tiny P-LAD}} &= \frac{\EE \left[ \left( W + \sigma_{\mbox{\tiny P-LAD}} Z\right)^2 \mathbbm{1} \left\{ |W + \sigma_{\mbox{\tiny P-LAD}} Z| \leq  b\right\}\right] + b^2 \PP \left( |W + \sigma_{\mbox{\tiny P-LAD}} Z| > b\right) }{\PP^2 \left( |W + \sigma_{\mbox{\tiny P-LAD}} Z| \leq  b\right)}.
 \end{align}
 In the above display, both $\sigma_{\mbox{\tiny P-LAD}}$ and $\sigma_{\mbox{\tiny P-LS}}$ satisfy the equation of \eqref{sigma} with $\tau_{\mbox{\tiny P-LAD}}$ and $\tau_{\mbox{\tiny P-LS}}$, respectively.

Notice that in, sparse, high dimensional setting,  the distribution of the $x_0$ can be represented as a convex combination of the Dirac measure at 0 and a measure that doesn't have mass at zero. Let us denote with $\Delta$ and $U$ two random variables, each having the two measures above. Then, the asymptotic mean squared error satisfies 
\[
\delta \sigma^2 = \delta \tau^2 \left( (1- {\omega} ) (\EE_{Z} \eta(Z,\alpha) )^2 + \omega \EE_{(U,Z)} \left[ \eta\left(\frac{U}{\tau}+Z;\alpha\right)- \frac{U}{\tau}\right]^2\right).
\]
 We will explore this representation to study the relative efficiency of P-LS and P-LAD estimators. 
The relative efficiency of  P-LS w.r.t. P-LAD is defined as the quotient of their asymptotic mean squared errors. 
By results of previous sections, this amounts to the quotient of $\sigma_{\mbox{\tiny P-LS}}^2/\sigma_{\mbox{\tiny P-LAD}}^2$. To evaluate this quotient, we study the behavior of $\sigma_{\mbox{\tiny P-LS}}^2/ \sigma_{\mbox{\tiny $W$}}^2 $ and $\sigma_{\mbox{\tiny P-LAD}}^2/ \sigma_{\mbox{\tiny $W$}}^2 $ independently. In order to do so, we need a preparatory lemma below.

\begin{lemma}\label{lem:prep1}
Let  Conditions $(${\bf R}$)$, $(${\bf D}$)$ and $(${\bf A}$)$  hold.
Let $\bar\sigma_{\mbox{\tiny P-LAD}}^2$   be a fixed point solution to the state-evolution system of equations \eqref{tau} and \eqref{sigma}, with a loss  $\rho(x)=|x|$ .
Let $\sigma_{\mbox{\tiny $W$}}^2 $ be a variance of the error term $W$ \eqref{w}.    Then, $\tau_{\mbox{\tiny P-LAD}}^2 \to 0$ and $\sigma_{\mbox{\tiny P-LAD}}^2 \to 0$, whenever $\sigma_{\mbox{\tiny $W$}}^2 \to 0$ and $\tau_{\mbox{\tiny P-LAD}}^2 \to \infty$ and $\sigma_{\mbox{\tiny P-LAD}}^2 \to \infty$, whenever $\sigma_{\mbox{\tiny $W$}}^2 \to \infty$.
\end{lemma}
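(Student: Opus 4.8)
The plan is to reduce the statement to the three coupled identities that a stationary point $(\bar\tau,\bar\sigma,b)$ of the P-LAD state evolution must satisfy, and then to bound $\bar\tau$. For $\rho(x)=|x|$ the effective score is $\Phi(z;b)=z\,\mathbbm{1}\{|z|\le b\}+b\,\mathrm{sign}(z)\,\mathbbm{1}\{|z|>b\}$, so $\Phi^{2}(z;b)=\min(z^{2},b^{2})$, and, with $Z\sim\mathcal N(0,1)$ and the tuned threshold $\theta=\alpha\bar\tau$, equations \eqref{tau}, \eqref{sigma} and the effective‑slope identity of Lemma~\ref{lem:delta} read
\[
\bar\tau^{2}=\frac{\delta^{2}}{\omega^{2}}\,\EE\big[\min\big((W+\bar\sigma Z)^{2},b^{2}\big)\big],\qquad
\bar\sigma^{2}=\frac{1}{\delta}\,\EE\big[\big(\eta(x_{0}+\bar\tau Z,\alpha\bar\tau)-x_{0}\big)^{2}\big],
\]
\[
\frac{\omega}{\delta}=\PP\big(|W+\bar\sigma Z|<b\big)-b\,f_{W+\bar\sigma Z}(b)+b\,f_{W+\bar\sigma Z}(-b).
\]
By positive homogeneity of $\eta$ the middle identity becomes $\bar\sigma^{2}=\bar\tau^{2}H(\bar\tau)/\delta$ with $H(t)=\EE\big[(\eta(x_{0}/t+Z,\alpha)-x_{0}/t)^{2}\big]$. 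The first step is to show $H$ is continuous on $(0,\infty)$ with finite strictly positive limits at $0^{+}$ and $\infty$ (the latter $=\EE[\eta(Z,\alpha)^{2}]$), hence $0<c_{H}\le H(t)\le C_{H}<\infty$ uniformly with $c_{H},C_{H}$ depending only on $\alpha$ and $\PP_{x_{0}}$. This gives the equivalence $\sqrt{c_{H}/\delta}\,\bar\tau\le\bar\sigma\le\sqrt{C_{H}/\delta}\,\bar\tau$, so it suffices to establish each limit for $\bar\tau$ alone.

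For $\sigma_{W}^{2}\to\infty$ I would argue by contradiction. If $\bar\tau$ stays bounded on a subsequence then so does $\bar\sigma$; writing $W=\sigma_{W}W_{0}$ for the unit‑scale error, Condition~(D) gives $\|f_{W+\bar\sigma Z}\|_{\infty}\le\|f_{W}\|_{\infty}=\sigma_{W}^{-1}\|f_{W_{0}}\|_{\infty}\to0$, and $\PP(|W+\bar\sigma Z|\le K)\to0$ for every fixed $K$ by dominated convergence (the law of $W_{0}$ has no atom). Plugging these into the slope identity first forces $b\to\infty$ (otherwise $\PP(|W+\bar\sigma Z|<b)$ would have to converge both to $\omega/\delta$ and to $0$), and then $\PP(|W+\bar\sigma Z|>b)\to1-\omega/\delta>0$ (using $bf_{W+\bar\sigma Z}(\pm b)\to0$, valid since $W+\bar\sigma Z$ has finite variance and a Gaussian‑smoothed density); but then the first identity yields $\bar\tau^{2}\ge\frac{\delta^{2}}{\omega^{2}}b^{2}\,\PP(|W+\bar\sigma Z|>b)\to\infty$, a contradiction. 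Hence $\bar\tau\to\infty$, and $\bar\sigma^{2}=\bar\tau^{2}H(\bar\tau)/\delta\ge c_{H}\bar\tau^{2}/\delta\to\infty$.

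For $\sigma_{W}^{2}\to0$ I would keep the clipping, which is where the LAD case genuinely differs from least squares. From $|\Phi(z;b)|\le b$ the first identity gives $\bar\tau^{2}\le\frac{\delta^{2}}{\omega^{2}}b^{2}$; the slope identity shows $b$ is, up to a correction that vanishes with $b$, the $(\omega/\delta)$‑quantile of $|W+\bar\sigma Z|$, hence $b\le C(\sigma_{W}^{2}+\bar\sigma^{2})^{1/2}$ with a constant $C$ depending only on $\omega/\delta$ and the error shape, tending to $\Phi^{-1}\!\big(\tfrac12+\tfrac{\omega}{2\delta}\big)$ as $\sigma_{W}/\bar\sigma\to0$. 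Substituting $\bar\sigma^{2}=\bar\tau^{2}H(\bar\tau)/\delta$ and the uniform bounds on $H$, these combine to
\[
\bar\tau^{2}\ \le\ C_{1}\,\frac{\delta^{2}}{\omega^{2}}\,\sigma_{W}^{2}\ +\ \kappa(\bar\tau)\,\bar\tau^{2},\qquad \kappa(\bar\tau)=C_{2}\,\frac{\delta\,H(\bar\tau)}{\omega^{2}},
\]
for constants $C_{1},C_{2}$ depending only on $\omega/\delta$ and the error shape. On the branch of fixed points selected by the monotone state‑evolution recursion of Lemma~\ref{lem:uniquetau} — which lies in the admissible (sub‑phase‑transition) regime, the P‑LAD analogue of the condition $H(\bar\tau)<\delta$ that governs the Lasso state evolution — one has $\kappa(\bar\tau)\le1-\varepsilon_{0}$ for a fixed $\varepsilon_{0}>0$, so $\bar\tau^{2}\le C_{1}\delta^{2}\sigma_{W}^{2}/(\varepsilon_{0}\omega^{2})\to0$, and then $\bar\sigma^{2}=\bar\tau^{2}H(\bar\tau)/\delta\le C_{H}\bar\tau^{2}/\delta\to0$.

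The step I expect to be the main obstacle is precisely this last one: for least squares the analogous identity is affine in $\sigma_{W}^{2}$ and the statement is a one‑line computation, whereas for LAD one must control the feedback through the self‑consistent scale $b$ and certify the contraction constant $\kappa(\bar\tau)<1$, i.e. rule out a spurious branch of stationary points surviving as $\sigma_{W}\to0$. The cleanest way I see to do this is to exploit that at $\sigma_{W}=0$ the effective noise $W+\bar\sigma Z=\bar\sigma Z$ is exactly Gaussian, so the stationarity equations force $b=\bar\sigma\,\Phi^{-1}(\tfrac12+\tfrac{\omega}{2\delta})$ and hence $H(\bar\tau)=\omega^{2}\big/\big(\delta\,\EE[\min(Z^{2},b^{2}/\bar\sigma^{2})]\big)$, an equation whose only solution below the phase transition is $\bar\tau=0$; combining this with continuity of the fixed point in $\sigma_{W}$ then yields $\bar\tau^{*}(\sigma_{W})\to0$, and $\bar\sigma^{*}(\sigma_{W})\to0$ follows as above.
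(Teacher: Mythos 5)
Your route is genuinely different from the paper's, and considerably more ambitious. The paper's own proof of this lemma treats only the light-tail direction: it notes that $\lim_{\tau\to 0}\Psi_\alpha(\tau)=\omega/\delta\neq 0$ by Theorem 4 of \cite{Z15}, passes to the limiting fixed-point equation as $\sigma_{\mbox{\tiny $W$}}^2\to 0$, and then simply checks that $\tau_{\mbox{\tiny P-LAD}}=0$ ``satisfies both sides of the equation''; the heavy-tail direction $\sigma_{\mbox{\tiny $W$}}^2\to\infty$ is not argued at all in the written proof. Your contradiction argument for that direction --- bounded $\bar\tau$ forces bounded $\bar\sigma$ via the two-sided bounds on $H$, the slope identity then forces $b\to\infty$, and the clipping identity $\bar\tau^2=(\delta/\omega)^2\,\EE[\min((W+\bar\sigma Z)^2,b^2)]$ then diverges --- is essentially sound and is a genuine addition. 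Its one soft spot, the claim $b\,f_{W+\bar\sigma Z}(\pm b)\to 0$, is not actually needed: once $b>K$ for a fixed $K$, $\EE[\min((W+\bar\sigma Z)^2,b^2)]\geq K^2\,\PP(|W+\bar\sigma Z|>K)\to K^2$, which already contradicts boundedness of $\bar\tau$ since $K$ is arbitrary.

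The genuine gap is exactly where you flag it: the contraction constant in the light-tail case. With the bounds as you state them ($b\leq C(\sigma_{\mbox{\tiny $W$}}^2+\bar\sigma^2)^{1/2}$ with $C\to\Phi^{-1}(\tfrac12+\tfrac{\omega}{2\delta})$, and $\bar\sigma^2=\bar\tau^2H(\bar\tau)/\delta$), one gets $\kappa(\bar\tau)=C^2\delta H(\bar\tau)/\omega^2$, and $\kappa<1$ is never verified: $H$ is bounded below by a positive constant uniformly in $\bar\tau$, so $\kappa$ in fact diverges as $\omega/\delta\to 1$, which is precisely the ``$s\approx n$'' regime the paper emphasizes. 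So ``$\kappa(\bar\tau)\leq 1-\varepsilon_0$ on the admissible branch'' is an assertion, not a proof, and the lemma is stated without any such restriction. Your fallback --- solve the degenerate $\sigma_{\mbox{\tiny $W$}}=0$ system exactly and invoke continuity of the fixed point --- is closer in spirit to what the paper actually does, but it inherits the paper's weakness: showing that $\bar\tau=0$ solves the limiting equation does not rule out a second branch of fixed points staying bounded away from zero, and uniqueness/continuity of the selected fixed point is exactly what Lemma \ref{lem:uniquetau} declines to guarantee for the non-smooth LAD loss. To close the argument one would need either to impose and use a condition of the type $M(\omega)<\delta$ (as Lemma \ref{lem:Plad} does for P-LS) to exclude the spurious branch, or to prove monotonicity of $\tau^*$ in $\sigma_{\mbox{\tiny $W$}}^2$ directly.
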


 Next, we consider a class of distributions $f_W$ such that $\sigma_W^2$ exists  and consider state variable $\sigma_{\mbox{\tiny P-LAD}}^2$ as a function of $\sigma_W^2$. We provide limiting behavior of both P-LS and P-LAD in cases where $\sigma_W^2 \to 0$, that is the case of ``light tailed distributions."

\begin{lemma}\label{lem:Plad}
Let  Conditions $(${\bf R}$)$, $(${\bf D}$)$ and $(${\bf A}$)$  hold.
Let $\bar\sigma_{\mbox{\tiny P-LAD}}^2$ and $\bar\sigma_{\mbox{\tiny P-LS}}^2$  be a fixed point solution to the state-evolution system of equations \eqref{tau} and \eqref{sigma} with a loss  $\rho(x)=|x|$ and a loss  $\rho(x)=(x)^2$, respectively. 
Let $\sigma_{\mbox{\tiny $W$}}^2 $ be a variance of the error term $W$ \eqref{w}. In turn,  if $M(\omega)<\delta$,
\[
\lim _{ \sigma_{\mbox{\tiny $W$}}^2 \to 0} \frac{\bar\sigma_{\mbox{\tiny P-LS}}^2}{\sigma_{\mbox{\tiny $W$}}^2}  \to \frac{1}{1-M(\omega)/\delta}, \qquad \lim _{ \sigma_{\mbox{\tiny $W$}}^2 \to 0} \frac{\bar\sigma_{\mbox{\tiny P-LAD}}^2}{\sigma_{\mbox{\tiny $W$}}^2}  \to \infty,
\]
with
$
M(\omega) = \inf_\tau \left\{ (1-\omega) \EE \eta^2(Z;\tau) + \omega \sup_{\mu \geq 0} \EE \left( \eta(\mu +Z;\tau) - \mu\right)^2 \right\}
$, where $\EE$ is with respect to $Z$.
\end{lemma}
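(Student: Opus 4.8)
The plan is to reduce each coupled state-evolution fixed point to a scalar recursion in the noise level and then read off the limiting ratio; the two estimators differ only through the way the effective score rescales the residual.

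\textbf{Penalized least squares.} For $\rho(x)=x^2$ the rescaled effective score is the identity $z\mapsto z$, so a fixed point $(\bar\tau_{\mbox{\tiny P-LS}}^2,\bar\sigma_{\mbox{\tiny P-LS}}^2)$ of \eqref{tau}--\eqref{sigma} obeys $\bar\tau^2=\sigma_{\mbox{\tiny $W$}}^2+\bar\sigma^2$ and $\bar\sigma^2=\frac1\delta\,\EE[\eta(x_0+\bar\tau Z,\theta)-x_0]^2$. With $\theta=\alpha\bar\tau$, the scale equivariance $\eta(\bar\tau u,\alpha\bar\tau)=\bar\tau\,\eta(u,\alpha)$ turns the second equation into $\delta\bar\sigma^2=\bar\tau^2\,\EE[\eta(x_0/\bar\tau+Z,\alpha)-x_0/\bar\tau]^2$. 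Decomposing $\PP_{x_0}\in\mathcal F_\omega$ as a point mass at $0$ of weight $1-\pi$ (with $\pi\le\omega$) plus $\pi$ times a law for the nonzero part, and using that $\mu\mapsto\EE[(\eta(\mu+Z;\alpha)-\mu)^2]$ is nondecreasing on $[0,\infty)$ with limit $1+\alpha^2$, the supremum over $\mathcal F_\omega$ of $\EE[\eta(x_0/\bar\tau+Z,\alpha)-x_0/\bar\tau]^2$ equals $(1-\omega)\EE\eta^2(Z;\alpha)+\omega(1+\alpha^2)$, attained in the limit of a signal whose nonzero part escapes to $\infty$; minimizing over the threshold $\alpha$ gives exactly $M(\omega)$. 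At this least favorable configuration with optimal threshold the right side is $\bar\tau$-free, so $\delta\bar\sigma_{\mbox{\tiny P-LS}}^2=M(\omega)\,\bar\tau_{\mbox{\tiny P-LS}}^2$; substituting into $\bar\tau^2=\sigma_{\mbox{\tiny $W$}}^2+\bar\sigma^2$ gives $\bar\tau_{\mbox{\tiny P-LS}}^2(1-M(\omega)/\delta)=\sigma_{\mbox{\tiny $W$}}^2$, which under $M(\omega)<\delta$ determines $\bar\tau_{\mbox{\tiny P-LS}}^2$ and hence $\bar\sigma_{\mbox{\tiny P-LS}}^2=(M(\omega)/\delta)\,\bar\tau_{\mbox{\tiny P-LS}}^2$ as finite explicit multiples of $\sigma_{\mbox{\tiny $W$}}^2$; these ratios being constant in $\sigma_{\mbox{\tiny $W$}}^2$, the stated limit follows at once.

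\textbf{Penalized least absolute deviations.} By Lemma~\ref{lem:prep1}, $\bar\sigma_{\mbox{\tiny P-LAD}}^2\to0$ and $\bar\tau_{\mbox{\tiny P-LAD}}^2\to0$ as $\sigma_{\mbox{\tiny $W$}}^2\to0$, hence $z:=W+\bar\sigma_{\mbox{\tiny P-LAD}}Z\Rightarrow0$ with $\sigma_z^2:=\sigma_{\mbox{\tiny $W$}}^2+\bar\sigma_{\mbox{\tiny P-LAD}}^2\to0$. First I would control the Moreau parameter: the effective-slope equation of Lemma~\ref{lem:delta}, specialized to $\rho(x)=|x|$, reads $\PP(|z|<b)-b(f_z(b)-f_z(-b))=\omega/\delta$, and since $z$ has a density (a Gaussian convolution of $f_W$) bounded by $\|f_W\|_\infty<\infty$ under Condition (D), this forces $b\asymp\sigma_z$; along a subsequence, $b/\sigma_z\to c_0$ with $\PP(|\mathcal D|\le c_0)=\omega/\delta$, where $\mathcal D$ is the subsequential weak limit of the standardized residual $z/\sigma_z$, a mean zero, unit variance law. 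Since $\Phi^2(z;b)=z^2\mathbbm{1}\{|z|<b\}+b^2\mathbbm{1}\{|z|>b\}$ and $\nu(b)=\omega/\delta$,
\[
\bar\tau_{\mbox{\tiny P-LAD}}^2=\frac{\EE[\Phi^2(z;b)]}{\nu(b)^2}=\sigma_z^2\,\frac{\EE[\mathcal D^2\mathbbm{1}\{|\mathcal D|<c_0\}]+c_0^2\,\PP(|\mathcal D|>c_0)}{(\omega/\delta)^2}\,(1+o(1))=:C_{\mbox{\tiny LAD}}\,\sigma_z^2\,(1+o(1)),
\]
and, using that the integrand is at most $c_0^2$ on $\{|\mathcal D|<c_0\}$ and that $c_0$ is bounded below by virtue of the bounded density of $\mathcal D$, one checks $C_{\mbox{\tiny LAD}}\ge1$, with $C_{\mbox{\tiny LAD}}\to\pi/2$ in the Gaussian-residual, highly sparse limit --- the classical efficiency deficit of LAD relative to LS resurfacing as a scaling constant.

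It remains to close the loop through the estimation equation. The soft-thresholding step being loss-independent, the same least favorable signal / optimal threshold reduction gives $\delta\bar\sigma_{\mbox{\tiny P-LAD}}^2=M(\omega)\,\bar\tau_{\mbox{\tiny P-LAD}}^2(1+o(1))$ with the same $M(\omega)$; combined with the previous display, $\bar\sigma_{\mbox{\tiny P-LAD}}^2=(M(\omega)C_{\mbox{\tiny LAD}}/\delta)\,(\sigma_{\mbox{\tiny $W$}}^2+\bar\sigma_{\mbox{\tiny P-LAD}}^2)(1+o(1))$. When the multiplier $M(\omega)C_{\mbox{\tiny LAD}}/\delta$ is at least $1$ this recursion has no solution with bounded $\bar\sigma_{\mbox{\tiny P-LAD}}^2/\sigma_{\mbox{\tiny $W$}}^2$, which forces $\bar\sigma_{\mbox{\tiny P-LAD}}^2/\sigma_{\mbox{\tiny $W$}}^2\to\infty$. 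The main obstacle is exactly this quantitative step --- proving $M(\omega)C_{\mbox{\tiny LAD}}\ge\delta$ in the limit, that is, that the inflated effective variance $C_{\mbox{\tiny LAD}}\bar\tau^2$ makes the map $\bar\sigma^2\mapsto(M(\omega)C_{\mbox{\tiny LAD}}/\delta)(\sigma_{\mbox{\tiny $W$}}^2+\bar\sigma^2)$ non-contractive: one has to pin the subsequential residual $\mathcal D$ (arguing that the only scenario consistent with both fixed-point equations has $\bar\sigma_{\mbox{\tiny P-LAD}}\gg\sigma_{\mbox{\tiny $W$}}$, so $\mathcal D=Z$ and $C_{\mbox{\tiny LAD}}=\pi/2$) and then read off the inequality from the explicit forms of $C_{\mbox{\tiny LAD}}$ and $M(\omega)$ using $M(\omega)<\delta$. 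The contrast with P-LS, whose multiplier $M(\omega)/\delta$ stays strictly below $1$, is what makes the relative efficiency degenerate.
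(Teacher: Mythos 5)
Your P-LS half takes a genuinely different route from the paper: the paper obtains the limit $1/(1-M(\omega)/\delta)$ by citing Theorem 7 of Zheng et al.\ (2015), whereas you re-derive it from the two fixed-point equations via scale equivariance of $\eta$ and the least-favorable-signal computation. That is the standard noise-sensitivity argument and is acceptable as a sketch, provided you make explicit that the identity $\delta\bar\sigma^2_{\mbox{\tiny P-LS}}=M(\omega)\bar\tau^2_{\mbox{\tiny P-LS}}$ holds only at the minimax configuration (least favorable signal in $\mathcal F_\omega$, optimal threshold), which is how the constant $M(\omega)$ must be read in the statement.

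The P-LAD half has a genuine gap, and it is the one you yourself flag --- but it is worse than a missing estimate. Your reduction yields $\bar\sigma^2_{\mbox{\tiny P-LAD}}=(M(\omega)C_{\mbox{\tiny LAD}}/\delta)(\sigma_{\mbox{\tiny $W$}}^2+\bar\sigma^2_{\mbox{\tiny P-LAD}})(1+o(1))$ with $C_{\mbox{\tiny LAD}}$ bounded (tending to $\pi/2$), so divergence of the ratio requires $M(\omega)C_{\mbox{\tiny LAD}}\geq\delta$. That inequality does not follow from $M(\omega)<\delta$ and fails in the sparse regime: as $\omega\to 0$ one has $M(\omega)\to 0$ while $C_{\mbox{\tiny LAD}}$ stays bounded, so $M(\omega)C_{\mbox{\tiny LAD}}<\delta$ and your recursion returns the finite value $\frac{MC/\delta}{1-MC/\delta}$, contradicting the claimed divergence rather than establishing it. The source of the loss is the step $\bar\tau^2_{\mbox{\tiny P-LAD}}=C_{\mbox{\tiny LAD}}\sigma_z^2(1+o(1))$, which absorbs the boundary contribution $b^2\PP(|Y|>b)/\PP^2(|Y|\leq b)$ into a bounded multiple of $\sigma_z^2$. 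The paper keeps it separate: it derives the exact identity
\[
\frac{\sigma_{\mbox{\tiny P-LAD}}^2}{\sigma_{\mbox{\tiny $W$}}^2} = \frac{\Psi_{\alpha}}{1- g(\tau^2_{\mbox{\tiny P-LAD}}) \Psi_{\alpha}} \left[ \frac{f (\mbox{\footnotesize $W$};  \tau^2_{\mbox{\tiny P-LAD}}) }{\sigma_{\mbox{\tiny $W$}}^2}  + \frac{\xi(b) }{\sigma_{\mbox{\tiny $W$}}^2} \right], \qquad \xi(b)=b^2\,\frac{\PP(|Y|>b)}{\PP^2(|Y|\leq b)},
\]
uses Lemma \ref{lem:prep1} to send $\tau_{\mbox{\tiny P-LAD}},\sigma_{\mbox{\tiny P-LAD}}\to 0$, and then shows that the single term $\xi(b)/\sigma_{\mbox{\tiny $W$}}^2$ diverges by L'H\^opital applied to the Gaussian tails. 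To rescue your route you would have to track the exact relative rates at which $b$, $\sigma_z$, and that boundary term vanish compared with $\sigma_{\mbox{\tiny $W$}}$, rather than the subsequential scaling $b/\sigma_z\to c_0$; the divergence lives precisely in the discrepancy your normalization erases.
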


A recent work \cite{Z15} proved that $M(\omega)/\delta \geq \omega/\delta$. Together with  the results of Lemma \ref{lem:Plad}, we can see that the P-LAD method is less efficient than the P-LS method for all of  $\omega<\delta$. In other situations where $\omega \to \delta$, both limits  on the right hand side of Lemma \ref{lem:Plad} are infinity and the two methods are inseparable. 
Classical Huber's results state that  the LS method is more efficient than the LAD method  only for the class of Normal distributions. However, with high dimensional asymptotic, where $s \to n$ we do not see this pattern. The result above identifies new the breakdown point, where $M(\omega)=\delta$, that is,
\[
\sup \left\{\omega:  \inf_\tau \Bigl [ (1-\omega) \EE \eta^2(Z;\tau) + \omega \sup_{\mu \geq 0} \EE \left( \eta(\mu +Z;\tau) - \mu\right)^2 \Bigl] < \delta\right\}.
\]
The implication is that when the sparsity $s$ approaches $n$ the P-LAD and P-LS method have efficiency of the same order.
Next, we provide limiting behavior of both P-LS and P-LAD in cases where $\sigma_W^2 \to \infty$; that is, in the case of ``heavy tailed distributions."

\begin{lemma}\label{lem:Plad2}
Let  Conditions $(${\bf R}$)$, $(${\bf D}$)$ and $(${\bf A}$)$  hold.
Let $\bar\sigma_{\mbox{\tiny P-LAD}}^2$ and $\bar\sigma_{\mbox{\tiny P-LS}}^2$  be a fixed point solution to the state-evolution system of equations \eqref{tau} and \eqref{sigma} with a loss  $\rho(x)=|x|$ and a loss  $\rho(x)=(x)^2$, respectively. 
Let $\sigma_{\mbox{\tiny $W$}}^2 $ be a variance of the error term $W$ \eqref{w}. Then,  if $\Gamma <\delta$,
\[
\lim _{ \sigma_{\mbox{\tiny $W$}}^2 \to \infty} \frac{\bar\sigma_{\mbox{\tiny P-LS}}^2}{\sigma_{\mbox{\tiny $W$}}^2}  \to \frac{1}{1-\Gamma /\delta}, \qquad \lim _{ \sigma_{\mbox{\tiny $W$}}^2 \to \infty} \frac{\bar\sigma_{\mbox{\tiny P-LAD}}^2}{\sigma_{\mbox{\tiny $W$}}^2}  \to \frac{\Gamma}{\delta},
\]
with
$
\Gamma  =  \EE \eta^2(Z;\alpha)   
$, where $\EE$ is with respect to $Z$.
\end{lemma}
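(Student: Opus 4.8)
The plan is to reduce both state-evolution fixed-point systems \eqref{tau}--\eqref{sigma} to a single scalar identity that becomes explicit as $\sigma_{\mbox{\tiny $W$}}^2\to\infty$, and then to read off the two limits. First I would specialize the system. For P-LS, the Example on the least squares loss gives $G(z;b)=z$ at the slope-matching value of $b$, so \eqref{tau} collapses to the exact identity $\bar\tau_{\mbox{\tiny P-LS}}^2=\sigma_{\mbox{\tiny $W$}}^2+\bar\sigma_{\mbox{\tiny P-LS}}^2$; for P-LAD the Example on the absolute deviation loss together with Lemma~\ref{lem:delta} gives $G(z;b)=\frac{\delta}{\omega}\bigl(z-\eta(z,b)\bigr)$ and the slope condition that pins down $b$ through the law of $W+\bar\sigma_{\mbox{\tiny P-LAD}}Z$, turning \eqref{tau} into the explicit recursion for $\tau^2_{\mbox{\tiny P-LAD}}$ displayed in Section~\ref{sec:RE}. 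Lemma~\ref{lem:prep1} then guarantees that along $\sigma_{\mbox{\tiny $W$}}^2\to\infty$ all the state variables $\bar\tau_\bullet,\bar\sigma_\bullet$ diverge, which is the key input that lets me linearize \eqref{sigma}.

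Next I would analyze \eqref{sigma} in the large-$\bar\tau$ regime. Using the scale equivariance $\eta(x_0+\bar\tau Z,\alpha\bar\tau)=\bar\tau\,\eta(x_0/\bar\tau+Z,\alpha)$ and the uniform bound $|\eta(a+Z,\alpha)-a|\le|Z|+\alpha$, dominated convergence gives $\bar\sigma_t^2/\bar\tau_t^2\to\frac1\delta\,\EE\,\eta^2(Z;\alpha)=\Gamma/\delta$ as $\bar\tau\to\infty$, since $x_0/\bar\tau\to0$ almost surely. For P-LS this already suffices: substituting $\bar\sigma_{\mbox{\tiny P-LS}}^2=(\Gamma/\delta+o(1))\bar\tau_{\mbox{\tiny P-LS}}^2$ into $\bar\tau_{\mbox{\tiny P-LS}}^2=\sigma_{\mbox{\tiny $W$}}^2+\bar\sigma_{\mbox{\tiny P-LS}}^2$ and using $\Gamma<\delta$ yields $\bar\tau_{\mbox{\tiny P-LS}}^2(1-\Gamma/\delta+o(1))=\sigma_{\mbox{\tiny $W$}}^2$, hence the stated limit $1/(1-\Gamma/\delta)$. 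For P-LAD I would rescale by $\sigma_{\mbox{\tiny $W$}}$: writing $W=\sigma_{\mbox{\tiny $W$}}W_0$, set $\tilde\tau=\bar\tau_{\mbox{\tiny P-LAD}}/\sigma_{\mbox{\tiny $W$}}$, $\tilde\sigma=\bar\sigma_{\mbox{\tiny P-LAD}}/\sigma_{\mbox{\tiny $W$}}$, $\tilde b=b/\sigma_{\mbox{\tiny $W$}}$; homogeneity of the absolute-deviation score, $\Phi(cz;cb)=c\,\Phi(z;b)$, and of the slope condition reduce \eqref{tau} and the slope equation to a $\sigma_{\mbox{\tiny $W$}}$-free system in $(\tilde\tau,\tilde\sigma,\tilde b)$, into which I substitute $\tilde\sigma^2=(\Gamma/\delta)\tilde\tau^2$ from the previous step; solving this limiting system gives $\bar\sigma_{\mbox{\tiny P-LAD}}^2/\sigma_{\mbox{\tiny $W$}}^2\to\Gamma/\delta$. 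Throughout, $\Gamma<\delta$ is exactly the condition keeping the P-LS fixed point bounded relative to $\sigma_{\mbox{\tiny $W$}}^2$, so both quotients are finite.

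The main obstacle is the P-LAD step: unlike P-LS, the threshold $b$ is itself a moving quantile of $W+\bar\sigma_{\mbox{\tiny P-LAD}}Z$, whose law drifts with $\sigma_{\mbox{\tiny $W$}}$, so it cannot be frozen. The two delicate points are (i) showing the rescaled triple $(\tilde\tau,\tilde\sigma,\tilde b)$ actually converges rather than merely staying bounded, for which I would use monotonicity and continuity of the state-evolution map as in Lemma~\ref{lem:uniquetau} to isolate the relevant branch of fixed points and show it depends continuously on $\sigma_{\mbox{\tiny $W$}}^2$, and (ii) justifying the interchange of limit and expectation in \eqref{tau} uniformly over the drifting truncation level, which I would handle via the uniform bound $|\Phi(z;b)|\le|z|$ together with uniform integrability of $(W+\bar\sigma_{\mbox{\tiny P-LAD}}Z)^2/\sigma_{\mbox{\tiny $W$}}^2$. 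Once the two limits are established, combining them gives $\bar\sigma_{\mbox{\tiny P-LS}}^2/\bar\sigma_{\mbox{\tiny P-LAD}}^2\to\delta^2/\bigl(\Gamma(\delta-\Gamma)\bigr)\ge4$, so P-LAD is never dominated by P-LS for heavy-tailed errors, matching the claim in the introduction.
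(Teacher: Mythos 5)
Your overall architecture differs genuinely from the paper's. The paper does not rescale: it reuses the exact decomposition established in the proof of Lemma~\ref{lem:Plad},
\[
\frac{\sigma_{\mbox{\tiny P-LAD}}^2}{\sigma_{\mbox{\tiny $W$}}^2}=\frac{\Psi_{\alpha}(\tau)}{1-g(\tau)\Psi_{\alpha}(\tau)}\left[\frac{f(W;\tau)}{\sigma_{\mbox{\tiny $W$}}^2}+\frac{\xi(b)}{\sigma_{\mbox{\tiny $W$}}^2}\right],
\]
and then computes four separate limits: $\Psi_\alpha\to\Gamma/\delta$ (citing \cite{Z15}), $g(\tau)\to 0$ and $f(W;\tau)/\sigma_{\mbox{\tiny $W$}}^2\to 0$ by repeated L'H\^opital arguments, and $\xi(b)/\sigma_{\mbox{\tiny $W$}}^2\to 1$ by identifying $\xi(b)$ with the dominant part of $\EE\,\Phi^2(W+\sigma Z;b)$. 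The entire P-LAD mechanism in the paper is that the capped term $b^2\,\PP(|Y|>b)$ swallows the whole of $\tau^2_{\mbox{\tiny P-LAD}}$ while the interior term $f$ becomes negligible relative to $\sigma_{\mbox{\tiny $W$}}^2$. Your proposal replaces all of this with a homogeneity/rescaling argument, which is a legitimately different route; the P-LS half and the identification $\bar\sigma_t^2/\bar\tau_t^2\to\Gamma/\delta$ are fine (modulo the fact that your P-LS conclusion is really a statement about $\bar\tau^2_{\mbox{\tiny P-LS}}/\sigma_{\mbox{\tiny $W$}}^2$, not $\bar\sigma^2_{\mbox{\tiny P-LS}}/\sigma_{\mbox{\tiny $W$}}^2$; the two differ by a factor $\Gamma/\delta$ in the limit, an ambiguity admittedly present in the statement itself).

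The genuine gap is the sentence ``solving this limiting system gives $\bar\sigma_{\mbox{\tiny P-LAD}}^2/\sigma_{\mbox{\tiny $W$}}^2\to\Gamma/\delta$,'' which is asserted rather than carried out, and which does not follow from the system you set up. Writing $W=\sigma_{\mbox{\tiny $W$}}W_0$ and using the homogeneity of the LAD score, your limiting system is $\tilde\tau^2=(\delta/\omega)^2\,\EE\,\Phi^2(W_0+\tilde\sigma Z;\tilde b)$ subject to $\PP(|W_0+\tilde\sigma Z|\le\tilde b)=\omega/\delta$ and $\tilde\sigma^2=(\Gamma/\delta)\tilde\tau^2$. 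The claimed conclusion requires $\tilde\tau^2=1$, but the solution of this system retains dependence on the law of $W_0$: for instance, with Gaussian $W_0$ one gets $\EE\,\Phi^2(W_0+\tilde\sigma Z;\tilde b)=(1+\tilde\sigma^2)c_q$ with $c_q=\EE[Z^2\mathbbm{1}\{|Z|\le q\}]+q^2\PP(|Z|>q)$ and $\PP(|Z|\le q)=\omega/\delta$, whence $\tilde\sigma^2=(\Gamma/\delta)A/(1-A\Gamma/\delta)$ for $A=(\delta/\omega)^2c_q$, which is not $\Gamma/\delta$ unless $A=1/(1+\Gamma/\delta)$. In other words, the rescaling keeps the interior term $\EE[Y_0^2\mathbbm{1}\{|Y_0|\le\tilde b\}]$ and the capped term $\tilde b^2\PP(|Y_0|>\tilde b)$ at the same order, so it cannot isolate the dominance of the capped term that drives the paper's answer; the regime in which the paper's limits $f/\sigma_{\mbox{\tiny $W$}}^2\to0$ and $\xi(b)/\sigma_{\mbox{\tiny $W$}}^2\to1$ hold is not the pure scale family $W=\sigma_{\mbox{\tiny $W$}}W_0$ that your homogeneity step presupposes. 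To repair the argument you would need to either prove that the interior contribution to $\tau^2_{\mbox{\tiny P-LAD}}$ is $o(\sigma_{\mbox{\tiny $W$}}^2)$ in the intended heavy-tail regime, or abandon the rescaling and follow the paper's term-by-term analysis of $g$, $f$ and $\xi$.
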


We observe that the result above does not depend on the sparsity $s$. Moreover,  as $ {(1-\Gamma /\delta)^{-1}}$ is larger than or equal to $\Gamma/\delta$, it displays a universally better efficiency of P-LAD over P-LS for  all ``heavy-tailed distributions" $f_W$.  In \cite{DM13}, for the unpenalized LAD and LS, such universal guarantees do not exist  and  are also dimensionality dependent. However, in the presence of  model selection we obtain a new behavior, where P-LAD achieves better asymptotic efficiency for every  $s$ and $p$ and $n$ and $n,p \to \infty$ with $n/p \in(0,1)$.
  
\section{Numerical Simulation} \label{sec:examples}

Within this section, we'd like to show the finite sample performance of RAMP from the following five aspects. First, we   discuss how to select the tuning parameter and show the existence and uniqueness of the state evolution parameters  while allowing different loss functions. Second, we   show the limit behaviors of iterative parameters of RAMP with different loss functions. Third, we  compare the performance of RAMP algorithm with different error distribution settings, which includes light--tailed and heavy--tailed. Fourth, we   release the assumption of the Gaussian design matrix and show that the distribution of design matrix does not effect the asymptotic performance of the distribution. Finally, we   discuss the relative efficiency of the RAMP estimators with different undersampling and sparsity setting.

\subsection{Tuning Parameter Selection \& Implementation}\label{sec:tuning}
The policy to choose for thresholds $\theta_t$ is based on \cite{DMM09},  which sets $\theta_t = \alpha \bar{\tau}_t$, where $\alpha$ is taken to be fixed.  In \cite{DMM09},   authors choose a grid of $\alpha$ starting from $\alpha_{min},$ so as to get a grid of iterative parameters. We mimic the same approach as that which offers a set of $\alpha$ within an interval $[\alpha_{min}, \alpha_{max}]$. For each $\alpha$, we get the RAMP estimator $x^t$ and SE iterative parameters $\bar{\tau}_t$ and $\bar{\sigma}_t$. We use these parameters to evaluate the AMSE$(x^t,x_0)$ and then tune the optimal $\alpha$ by minimizing AMSE$(x^t,x_0)$. In other words, $\bar{\tau}_t$ is calculated by the recursion $\bar{\tau}_t^2 = \mathbb{V}(\bar{\sigma}^2, \alpha\bar{\tau}_t)$, where $\mathbb{V}$ is the right hand side of equation (\ref{tau}) and $\bar{\sigma}$ is calculated from equation (\ref{sigma}).  The following simulation sections substitute $\theta=\theta(\alpha)$ to be $\lambda$ as a tuning parameter based on Lemma \ref{the:thethe}, in order to do an easy comparison  between the huber loss,  the least squares loss and the quantile loss.
In our simulation examples, we implement equations  \eqref{tau},\eqref{sigma} and \eqref{AMSE} for different cases  of the loss functions $\rho$. When $p>n$, it is hard to simplify the expression of these equations, except when the error is normal (simplified  as an equation (1.7) in \cite{DMM09}).

\subsection{Existence and Uniqueness of State Evolution parameters}\label{sec:existence}

In this subsection, we offer plots of the recursion $\bar{\tau}_t^2 = \mathbb{V}(\bar{\sigma}^2, \alpha\bar{\tau}_t)$ to show that $\bar{\tau}^*$ exists and is unique as iteration goes with differentiable and non-differentiable loss functions. We  choose $\alpha = 2$  to illustrate the worst case behavior. We fix $\delta = 0.64$, with $p = 500$ and $x_0$ that follows $P(x_0 = 1) = P(x_0 = -1) = 0.064$ with t $P(x_0 = 0) = 0.872$.

\begin{figure}[H]
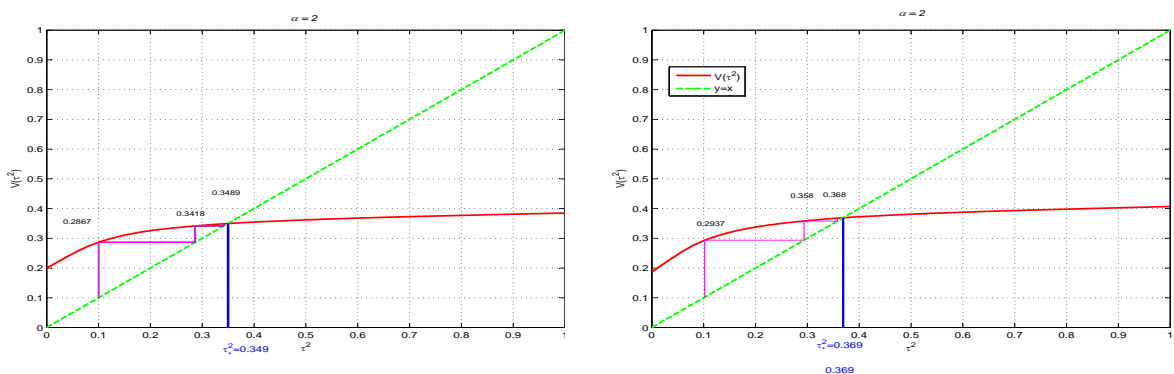

\centering
\vspace{-20pt}
\includegraphics[trim = 50mm 60mm 15mm 40mm, width=0.4\textwidth, height=0.25\textheight]{laso_tau_ftau.pdf}
\includegraphics[trim = 20mm 60mm 45mm 40mm, width=0.4\textwidth, height=0.25\textheight]{huber_tau_ftau2.pdf}
\caption{Existence and uniqueness of $\bar{\tau}^2$ with different loss function: the square loss and the huber loss} \label{fig:1}
\end{figure}

\begin{figure}[H]
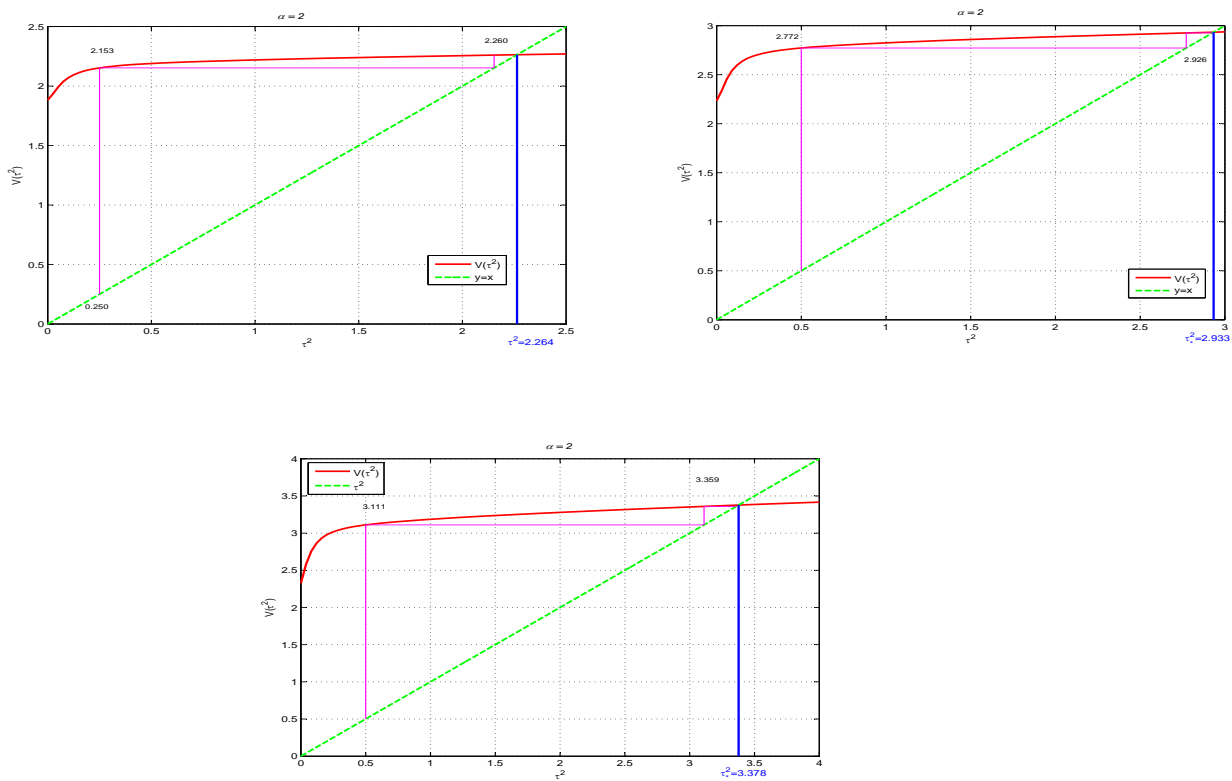

\centering
\vspace{-20pt}
\includegraphics[trim = 50mm 60mm 15mm 40mm, width=0.4\textwidth, height=0.25\textheight]{abs_tau_ftau1.pdf}
\includegraphics[trim = 0mm 58mm 62mm 40mm, width=0.4\textwidth, height=0.25\textheight]{Quan_tau_ftau_71.pdf}
\includegraphics[trim = 50mm 60mm 15mm 40mm, width=0.4\textwidth, height=0.25\textheight]{Quan_tau_ftau_31.pdf}
\caption{Existence and uniqueness of $\bar{\tau}^2$ with non-differentiable  loss functions: the absolute loss and the quantile losses with $\tau = 0.7$ and   $\tau = 0.3$, respectively.}\label{fig:2}
\end{figure}

We focus on Gaussian distribution $\mathcal N (0,0.2)$ for the errors $W$ and show loss of efficiency when other than least squares loss is considered (see Figure \ref{fig:efficiency} below).
Results of the state evolution equations are presented in  Figures \ref{fig:1}-\ref{fig:efficiency}  below, where in the Gaussian setting above, we consider the least squares loss, the huber loss with $\gamma=1$, the least absolute deviation loss and the quantile losses with $\tau=0.7$ and $\tau=0.3$. We observe that the unique value of the state-evolution recursions is easily found even for the non-differentiable losses, under the recommendations of Section \ref{sec:RAMP}.
Figures \ref{fig:1} and \ref{fig:2}, right panel, shows how $\bar \tau_t^2$ evolves to the fixed point near $0.349$ starting from  $0.1$ for the case of the least squares loss and to the fixed point near $0.369$, $2.264$, $2.933$, $3.378$ for the case of the huber, least absolute deviations and quantile losses, respectively. Simultaneously the mapping $\mathbb{V}(\bar \tau^2,b,\theta)$ evolves to the fixed points near $0.348$, $0.368$, $2.260$, $2.926$, $3.359$ for all five losses considered - including non-differentiable losses.
Moreover, Figure \ref{fig:efficiency} illustrate that the loss is not great, even when we start from the randomly chosen starting $\alpha$ value. We perform further efficiency study in the subsection \ref{sec:RE1}.

\begin{figure}[H]
\centering
\vspace{-20pt}
\includegraphics[trim = 50mm 62mm 30mm 40mm, width=0.4\textwidth, height=0.25\textheight]{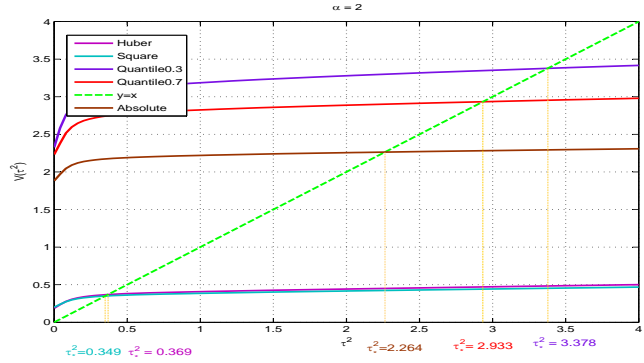}
\caption{Comparative plot of the mapping  $\mathbb{V}(\bar{\tau}^2)$ with different  loss functions with the  error $W$ following standard normal distribution and with fixed $\delta = 0.64$ and $p=500$.}\label{fig:efficiency}
\end{figure}

\subsection{Limit behavior of the parameters of  RAMP} \label{sec:con}
We assess the limit behaviors of parameters of different loss functions to express the iterations of the  RAMP algorithm. We are interested in the linear regression model
\[
   Y = A^Tx_0 + W,
\] 
where each element of $A$ is i.i.d. and  follows $N(0, 1/n)$. The error $W$ follows $N(0, 0.2)$ and the sample size is 320. We consider   fixed ratio $\delta = 0.64$. The distribution of the true parameter is set as $\mathbb{P}(x_0 = 1) = \mathbb{P}(x_0 = -1) = 0.064$ and $\mathbb{P}(x_0 = 0) = 0.872.$

The simulation step is as follows.
 We use $\omega = s/p = 0.128$ based on the setting of the $p_{x_0}$ into equation (\ref{alg:alg3}) to generate b.
We generate a series of $\alpha$, and regard the threshold $\theta_t = \alpha *\bar{\tau}_t$.
Then,
 we use the iteration of $\bar{\sigma}_t$, $\bar{\tau}_t$ from Lemma \ref{lem:lem22} to find the stable point $\bar{\tau}^*$ with stopping at $|\bar{\tau}_t-\bar{\tau}_{t-1}|<tol$, where $tol$ is a small positive number and is taken to be $10^{-6}$ here.
Lastly, we use the expression of $\lambda = \frac{\alpha \bar{\tau}^*\omega}{b\delta}$ and the expression of AMSE in Theorem \ref{the:the1} to find the AMSE$(x^t,x_0)$.
The penalized $M$-estimators theory suggest  cross-validation for the optimal values of $\lambda$. For such value we find 
 its corresponding AMSE$(x^t,x_0)$ and present it in Table \ref{tab:1} below.

\begin{table}[!ht]
\caption{Convergence of RAMP iteration with different loss function }
\label{tab:Table1}
\centering
\begin{tabular}{l l l l l l}
Loss Function &b&optimal $\lambda$& iteration steps&$\bar{\tau}^{*^2}$ & AMSE\\
\hline\hline
Square Loss& 0.2711864&0.6970546&8&0.3265822&0.0810126\\
Huber Loss&0.2714135&0.6261463&12& 0.3431436&0.09150527 \\
Absolute Loss& 0.4990769&1.91523&8&2.0276825 &0.0943257\\ 
Quantile Loss &0.7319994&1.402867&11&2.821827&0.1177329\\
\hline\hline
\end{tabular} \label{tab:1}
\end{table}

  Table  \ref{tab:1} compares several necessary parameters in the iteration of the RAMP algorithm. We contrast   four different loss functions: Least Squares loss, Huber loss with $\gamma=1$, Least Absolute Deviation loss and Quantile Loss with $\tau=0.7$.  The results presented in the table are averages over $100$ repetitions. We notice that within only twenty iteration steps, the RAMP algorithm becomes stable no matter of the loss function considered. Furthermore, we present values of a number of   parameters of the RAMP algorithm: $\min$-regularization $b$, regularization $\lambda$ and state evolution $\bar \tau^*$.  We observe that they all differ according to the loss function considered, illustrating that there is no universal choice of the above parameters that works uniformly well for all loss function.\\

Additionally, we present Figure \ref{fig:correct} and show the empirical convergence of AMSE$(x^t,x_0)$ with respect to the optimal tuning parameter $\lambda$ and  different loss functions. The plots illustrate  that when $\lambda$ becomes larger, the AMSE$(x^t, x_0)$ decreases dramatically and further   stabilizes around $0.12$. The reason AMSE$(x^t,x_0)$ becomes fixed on $0.12$ is  because the RAMP algorithm shrinks the estimator $x^t$ to be the zero vector;  hence,  the AMSE$(x^t, x_0) = ||x^t||_2^2 = 0.064 +0.064 = 0.128$, when $\lambda$ is large enough.  Moreover, we notice that for each  of the loss functions, the RAMP algorithm  chooses  the optimal $\lambda$, which will offer the minimum AMSE$(x^t, x_0)$. Therefore, the RAMP algorithm maintains the advantage of the AMP, which in turns, offers  an optimal  solution to the problem \eqref{eqDef}.

\begin{figure}[h]
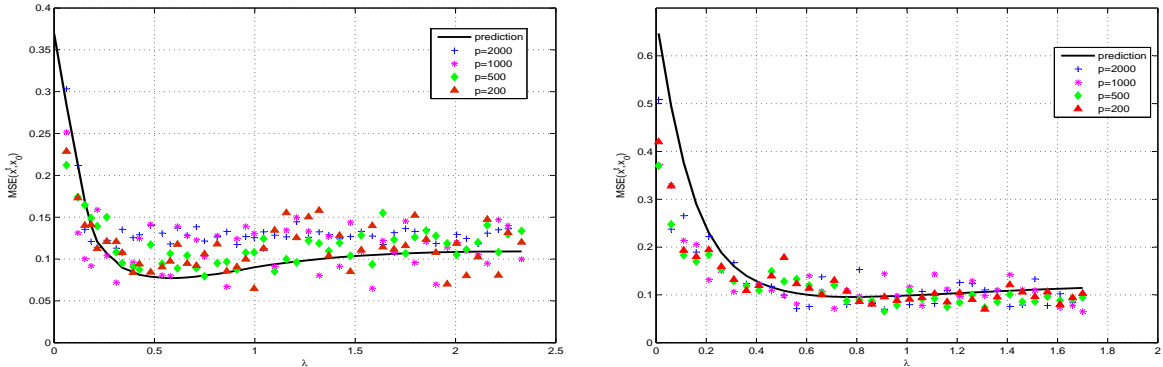

\hskip 65pt
\includegraphics[trim = 60mm 80mm 0mm 40mm, width=0.4\textwidth, height=0.25\textheight]{Lasso_mse_lambda.pdf}
\includegraphics[trim = 30mm 80mm 30mm 40mm, width=0.4\textwidth, height=0.25\textheight]{huber_mse_lambda.pdf}
\caption{AMSE$(x^t,x_0)$ compared to AMSE$(\hat{x},x_0)$ under various feature size and with different loss functions: least squares (left) and huber loss (right).}\label{fig:correct}
\end{figure}

\subsection{Robustness of RAMP with respect to the error distribution}\label{sec:error}
Further,   we know that using square loss to solve problem (\ref{eqDef}) is very sensitive with respect to the error distribution, which is the reason we release the loss function from the least squares loss to the general convex loss function satisfying Condition {\bf (R)}. We consider the robustness of the solution when the tail of error in model varies.

We assess the finite sample performance of RAMP through various models. We simulated data from the following model:
\[
Y = A^Tx_0 + W,
\]
where we generated $n = 640$ observations, $\delta = 0.64$, true parameter $x_0$ from $P(x_0 = 1) = 0.064, P(x_0 = -1) = 0.064$ and $P(x_0 = 0) = 0.872$, and each elements of $A$ satisfies $\mathcal{N} (0,1/n)$. We compare five scenarios for the error vector $w$. They are as follows: (a) light--tailed distribution: Normal $\mathcal{N}(0,0.2)$, Mixnormal $0.5\mathcal{N}(0,0.3)+0.5\mathcal{N}(0,1)$  and (b) heavy--tailed distribution: $t_8$, $t_4$, MixNormal $0.7\mathcal{N}(0,1) + 0.3 \mathcal{N}(0,3)$ and Cauchy$(0,1)$. The  Mixture of Normals distribution   generates samples from different normal distributions with corresponding probability and samples are centered to have mean zero.

\begin{figure}[h]
\centering
\includegraphics[trim = 55mm 60mm 10mm 30mm, width=0.4\textwidth, height=0.25\textheight]{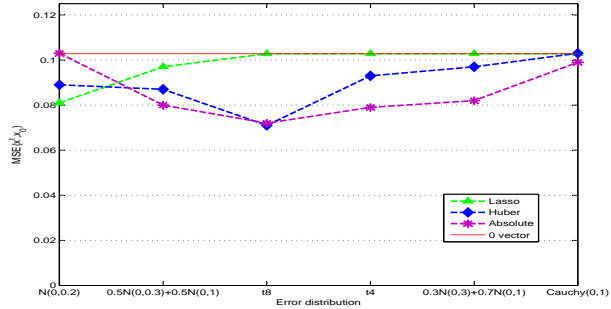}
\caption{AMSE$(x^t,x_0)$   under  various error distribution settings}
\label{fig:robust}
\end{figure}

Results of this experiment are presented in Figure \ref{fig:robust}. A few observations immediately follow.   The Lasso estimator is sensitive to the heavy tail error distribution whereas, the   Huber loss and the Least Absolute Deviation   loss perform better as the tail of the error distribution becomes heavier. Moreover, with larger tails the Least Absolute Deviation loss is clearly preferred over both the Huber and the Least Squares loss, whereas situation reverses when the tails are light. The Mixture of Normals errors are particularly difficult due to the bimodality of the error distribution. We see that  in both light and heavy tales cases of Mixture distribution, Huber Loss is preferred over the Least Squares loss.
  Lastly, as the tails becomes even heavier, all estimators face the problem  of  estimating the unknown parameter accurately. 

\subsection{Convergence property of RAMP with random design }\label{sec:design}
We proved that in case of the Gaussian  design matrix $A$  where each element has  mean 0 and  variance of $\frac{1}{n}$, the RAMP  algorithm recovers the penalized M-estimator in Theorem \ref{the:thethe}. We now release the restriction of the design matrix and generate $A$ from three different scenarios (we choose $\delta$ = 0.64 for all cases). First is the case of  $A_{ij}$ being i.i.d.  and following  $\mathcal{N}(0,1/n)$, whereas the last two are composed of the cases where $A$  is  random $\pm1$ matrix,  with each entry $A_{ij}$ being i.i.d  and such that $A_{ij}=\frac{1}{\sqrt{n}}$ or $A_{ij}=\frac{-1}{\sqrt{n}}$ with equal probability.

For each of the settings above, we plot the AMSE$(x^t,x_0)$ with respect to the $\lambda$ value. Average results  over $100$ repetitions are summarized in Figure \ref{fig:design}.
We find that the  two Binomial design  settings do  not change the line of AMSE$(x^t,x_0)$ and are very similar to the AMSE$(\hat{x}, x_0)$ with the Normal design. Even though we have not proved that the different design matrix does not effect the performance of the RAMP estimator,   because of  the central limit theorems effects we observe  the diminished influence in the results.  

\begin{figure}[h]
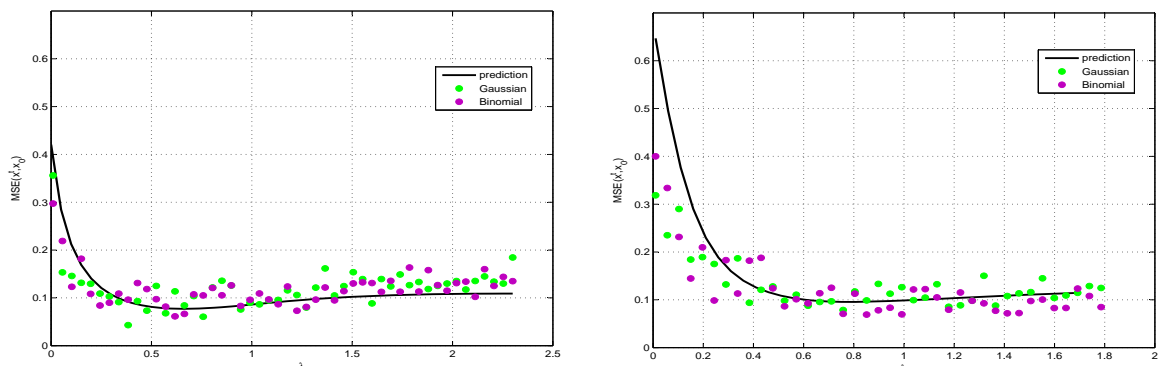

\hskip 65pt
\includegraphics[trim = 60mm 80mm 0mm 40mm, width=0.4\textwidth, height=0.25\textheight]{lasso_diff_design.pdf}
\includegraphics[trim = 30mm 80mm 30mm 40mm, width=0.4\textwidth, height=0.25\textheight]{hub_design.pdf}
\caption{AMSE$(x^t,x_0)$  under two design settings Gaussian design (green) and Binomial (purple) with least squares loss (left) and huber loss (right).}\label{fig:design}
\end{figure}

\subsection{Relative efficiency} \label{sec:RE1}
We use RAMP iteration to calculate the relative efficiency of the  Least square estimator versus the Least absolute estimator. It is known that the least square estimator is preferable in normal error assumption, but the least absolute estimator beats the least square estimator in double-exponential error assumption under classical low-dimensional setting.

In Table \ref{tab:Table1}, we fix $p = 50$ and discuss the comparison of relative efficiency between the low-dimensional case (where $p < n$) and the high-dimensional dense case (where $p \approx n$). We discuss the AMSE$(x^t, x_0)$ with  a different ratio of $\frac{p}{n}$ (10, 8, 3, 1.6, 1.4, 1.2) under two error settings (which are $N(0,0.2)$ and double exponential $(0,1)$). When we implement the equations \eqref{tau},\eqref{sigma} and \eqref{AMSE}, we consider $\eta$ function to be an identity function and $\omega$ is 1, because   neither the penalty nor the sparsity is needed.

\begin{table}[h]
\caption{Relative efficiency of Square Loss estimator w.r.t. Absolute Loss estimator under various low  dimensional  setting without sparsity}
\label{tab:Table1}
\begin{tabular*}{\linewidth}{ @{\extracolsep{\fill}} ll *{13}c @{}}
\toprule
  Relative Efficiency & \multicolumn{3}{c}{Least Squares} &
\multicolumn{3}{c}{Least Absolute Deviations}   \\
\midrule \midrule \addlinespace \\
\multicolumn{10}{c}{$p < n$  ,  with   fixed  $p =50$ and varying $n$}   \\
\addlinespace
   & $\delta =10$ & $\delta=8$  & $\delta =3$    & $\delta =10$ & $\delta=8$  & $\delta =3$      \\
\cmidrule{2-4} \cmidrule{5-7}  
\addlinespace \\
   Normal &   0.204 & 0.234 &0.308 & 0.395 & 0.439 & 0.568 \\ 
                           Laplace & 2.362&2.376   &  3.119 &  1.415&   1.792&1.578  \\ 
                                                                               \midrule \addlinespace \\
                                                    \multicolumn{10}{c}{$p \approx n$  ,  with   fixed  $p =50$ and varying $n$ }   \\
\addlinespace
                         & $\delta =1.6$ & $\delta =1.4$  & $\delta =1.2$    & $\delta =1.6$ & $\delta =1.4$  & $\delta =1.2$     \\
\cmidrule{2-4} \cmidrule{5-7} \cmidrule{8-10}  \addlinespace \\
  Normal &0.489 & 0. 643 & 1.102   & 0.946 & 0.962&1.192\\ 
                           Laplace & 5.544& 7.276  &  12.475 &  7.014 & 11.351  & 17.929 \\ 
                                                    \bottomrule
\end{tabular*}
\end{table}

\begin{table}[H]
\caption{Relative efficiency of penalized Square Loss estimator  w.r.t. penalized Absolute Loss estimator under various high dimensional     and sparsity setting}
\label{tab:Table2}
\begin{tabular*}{\linewidth}{ @{\extracolsep{\fill}} ll *{13}c @{}}
\toprule
  Relative Efficiency & \multicolumn{3}{c}{Least Squares} &
\multicolumn{3}{c}{Least Absolute Deviations}   \\
\midrule \midrule \addlinespace \\
\multicolumn{10}{c}{$p > n$ and  $s<n$,  with    fixed  $p=500$ and $\delta =0.64$ and varying $s/n$}   \\
\addlinespace
   & $\omega =0.05$ & $\omega =0.1$  & $\omega =0.2$    & $\omega =0.05$  & $\omega =0.1$  & $\omega =0.2$     \\
\cmidrule{2-4} \cmidrule{5-7}  
\addlinespace \\
   Normal & 0.042 & 0.0839 & 0.139 & 0.0458 & 0.113 & 0.183\\ 
                           Laplace &0.0437& 0.0914 & 0.192 & 0.0322 & 0.0745 & 0.177\\ 
                                                                               \midrule \addlinespace \\
                                                    \multicolumn{10}{c}{$p > n$ and  $s\approx n$,  with $p=500$ and fixed  $\delta =0.64$}   \\
\addlinespace
                         & $\omega =0.5$ & $\omega =0.55$  & $\omega =0.6$    & $\omega =0.5$  & $\omega =0.55$  & $\omega =0.6$   \\
\cmidrule{2-4} \cmidrule{5-7} \cmidrule{8-10}  \addlinespace \\
  Normal & 0.394 & 0.458 & 0.468  & 0.385 & 0.432 & 0.477\\ 
                           Laplace &0.522& 0.531 & 0.584 & 0.207 &0.245 & 0.289\\ 
                                                    \bottomrule
\end{tabular*}
\end{table}

 From the first two rows of Table \ref{tab:Table1}, we  see that in a Normal error setting, the Least Square estimator is preferable and the relative efficiency of the Least Square estimator w.r.t. the Least Deviation estimator is around $ {2}/{\pi}$. Further, we can see that in the Double exponential error setting, the Least Square estimator performs worse. This result matches the classical inference. From the last two rows, we can see that the Least Squares estimator is preferable no matter of the error distribution. This result is foreseen by \cite{DM13} and \cite{K13}.  
 
Remarkably, in Table \ref{tab:Table2}, we discuss a high-dimensional and sparse case ($p>n$). We fix $\delta = 0.64$ and $p=500$. This provides $n=320$.  For the  number of the non-zeros in true parameter, $s$,  we choose a variety  of options which  range from low-sparsity, $25$, to high sparsity, $300$.  From the first two rows of Table \ref{tab:Table2}, we see that in a Normal error setting, penalized Least Squares (P-LS) estimator is no longer preferred in all settings.  When the sparsity, $s$, is high and reaches $n$, penalized Least Absolute Deviations (P-LAD) estimator is preferred, whereas when the sparsity, $s$, is low,  P-LS estimator is preferred. However, from the last two rows, in the setting of the Laplace distribution, we see that P-LAD estimator is always preferred no matter of the size of $s$. This contradicts the findings of Table \ref{tab:Table1} and shows that model selection  affects the choice of the optimal loss function.


\section{Technical Proofs}

\subsection{Proofs for Section \ref{sec:2}}

\begin{proof}[Proof of Lemma \ref{lemma:lemma1}]
Let $(x,z)$ be a fixed point of the   RAMP algorithm iteration. Then the fixed point conditions at x read as
\[
  x=\eta(A^TG(z;b)+x;\theta)= \left\{\begin{array}{ll}
                      x+A^TG(z;b)-\theta, & \mbox{if $x+A^TG(z;b) >\theta$}\\
                       0,        &  \mbox{if $-\theta \leq x+A^TG(z;b) \leq \theta$}\\
                    x+A^TG(z;b)+\theta,&  \mbox{if $x+A^TG(z;b) < -\theta$}
                    \end{array}\right. .
\]
They imply that for all $x+A^TG(z;b) >\theta$, $x = x+A^TG(z;b)-\theta$, or in other terms that $A^TG(z;b)=\theta $. 
Similarly, $x+A^TG(z;b) <\theta$, $x = x+A^TG(z;b)+\theta$, or using different terms, that $A^TG(z;b)=-\theta $.
For the middle term, we observe that $x=0$, if and only if $-\theta < A^TG(z;b) < \theta $. 
 Hence,
\begin{align}\label{eq:eq12}
A^TG(z;b)=\theta v, 
\end{align}
where $v\in \mathbb{R}^p$ with each element 
$
v_i = \left\{\begin{array}{cc}
\mbox{sign} (x_i) & \text{if }  x_i \neq 0\\
(-1,1) & \text{if } x_i = 0
\end{array}
\right. .
$
Therefore, the correction term defined as  the average of the first derivative of $\eta(A^TG(z;b)+x;\theta)$, becomes:
\begin{align*}
\langle \partial_1\eta(A^TG(z;b)+x;\theta)\rangle =\langle\mathbbm{1}\{\lvert A^TG(z;b)\rvert\ \neq \theta\}\rangle 
                                                           =\langle\mathbbm{1}\{ x\neq 0\}\rangle 
                                                           =\frac{||x||_0}{p} = \omega.
\end{align*}
\end{proof}

\begin{proof}[Proof of Lemma \ref{the:thethe}]
The fixed point condition at $z$ reads
\begin{align}
z &= Y-Ax+\frac{1}{\delta}G(z;b)\langle \partial_1\eta(A^TG(z;b)+x;\theta)\rangle\nonumber.
  \end{align}
 Moreover, from Lemma \ref{lemma:lemma1} we conclude 
 $\langle \partial_1\eta(A^TG(z;b)+x;\theta)\rangle = \omega$, and hence  
  $
   z=Y-Ax+\frac{1}{\delta}\omega G(z;b)
   $. By definition of the rescaled effective score $G$, we conclude 
   $z=Y-Ax+\Phi(z;b),
$
which shows that $Y-Ax=z-\Phi(z;b)$.
Then, we have that the left hand side of the KKT condition becomes
\begin{align}\label{eq:eq20}
A^T\rho'(Y-Ax) &= A^T\rho'(z-\Phi(z;b)) \overset{(i)}{=}A^T\rho'(Prox(z,b))\nonumber\\
                                      &\overset{(ii)}{=}A^T\Phi(z;b)/{ b } 
                                       \overset{(iii)}{=} \frac{\theta v\omega}{\delta b}.
\end{align}
The equations (i) and (ii) are derived from the definition of $\Phi(z;b)$, equation (iii) is based upon the proof of Lemma \ref{lemma:lemma1}, equation (\ref{eq:eq12}). Hence,
\begin{align*}
A^TG(z;b)=A^T\Phi(z;b)\delta/\omega=\theta v
\end{align*}
so that $
A^T\Phi(z;b)/b=\frac{\theta v\omega}{\delta b}$.
Plugging $\theta= \frac{\lambda b\delta }{\omega}$ into equation (\ref{eq:eq20}), we have 
$
A^T\rho'(Y-Ax) = \lambda v.
$
\end{proof}

\begin{proof}[Proof of Lemma \ref{lem:lem22}]
This is an immediate application of state evolution  as defined in \cite{BM11}, which considers general recursions. Hence, it suffices to show  that the proposed algorithm is a special case of it.  
In the original notation of \cite{BM11}, the generalized recursions studied are
\begin{equation}\label{bb}
b^t = Aq^t -\lambda_tm^{t-1}
\end{equation}
\begin{equation}\label{hh}
h^{t+1}=A^Tm^t-\xi_tq^t
\end{equation}
where 
\begin{equation}\label{qf} 
q^t=f_t(h^t), \qquad 
m^t=g^t(b^t,w).
\end{equation}
The two scalars  $\xi_t$ and $\lambda_t$ are defined as 
\begin{equation}\label{xi}
\xi_t =\langle g'_t(b^t,w)\rangle 
\end{equation} and 
\begin{equation}\label{lambda}\lambda_t=\frac{1}{\delta}\langle f'_t(h^t)\rangle, \end{equation}where$\langle \cdot \rangle $ denotes an empirical mean over the entries in a vector and derivatives are with respect to the first argument. According to \cite{BM11}, the state evolution recursion involves two variables:
$
\bar{\tau}^2_t = \EE {g^2_t(\bar{\sigma}_tZ,W)} 
$ and
$
\bar{\sigma}_t=\frac{1}{\delta}\EE {f^2_t(\bar{\tau}_{t-1}Z)}.
$
To see that 
the RAMP algorithm in (\ref{alg:alg1}), (\ref{alg:alg2}) and (\ref{alg:alg3})   is a special case of this recursion,     we specify the above components of the general recursion to be
\begin{align}\label{h:h}
h^{t+1}&=x_0-A^TG(z^t;b_t)-x^t 
\\
\label{q:q}
q^t&=x^t-x_0
\\
\label{z:z}
z^t&=w-b^t
\\
\label{m}
m^t&=-G(z^{t};b_{t})
\\
\label{g}
g_t(s,w) &= -G(w-s;b_t)
\\
\label{f}
f_t(s)&=\eta(x_0-s;\theta)-x_0,
\end{align}
with the initial condition being $q^0=x_0$.
Now, we verify that  the simplification of the above series of equations \eqref{h:h}-\eqref{f} offer  the RAMP algorithm iterations. We discuss the first step of the algorithm and then the third, whereas we leave the discussion of the second step as the last. We observe
          \begin{align*}
          x^t&\overset{(\ref{q:q})}{=}q^t+x_0  \overset{(\ref{qf})}{=}f_t(h^t)+x_0 \overset{(\ref{f})}{=}\eta(x_0-h^t;\theta)-x_0+x_0  \\
                                                                  &\overset{(\ref{h:h})}{=}\eta(x_0-x_0+(A^TG(z^{t-1};b_{t-1}))+x^{t-1};\theta)  \\                                                                & =\eta(x^{t-1}+A^TG(z^{t-1};b_{t-1});\theta),
                                                                  \end{align*} which is the first step of our algorithm.
                                                                  Also, 
                                                                  \begin{align*}
z^t&\overset{(\ref{z:z})}{=}w-b^t
                                     \overset{(\ref{bb})}{=}w-Aq^t +\lambda_tm^{t-1}   
                                     \overset{(\ref{q:q})}{=}w-A(x^t-x_0)+\lambda_tm^{t-1}\\
                                     &\overset{(\ref{w})}{=}Y-Ax_0-A(x^t-x_0)+\lambda_tm^{t-1} 
                                     =Y-Ax^t+\lambda_tm^{t-1}\\
                                    &\overset{(\ref{m})(\ref{lambda})}{=}Y-Ax^t+\frac{1}{\delta}\langle f'_t(h^t)\rangle (-G(z^{t};b_{t}))\\
                                    &=Y-Ax^t+\frac{1}{\delta}G(z^{t};b_{t})\langle -\eta'(x_0-h^t;\theta)\rangle,   \qquad ( \text{Since }\langle f'_t(h^t)\rangle =\langle-\eta'(x_0-h^t;\theta)\rangle  )
                                                                  \end{align*}
                                                                  which is the third step of our algorithm.
Further, we need to show that $h^{t+1}$ in the above special recursion satisfies the equation of $h^{t+1}$ in general AMP, which means we need $h^{t+1}=A^Tm^t-\xi_tq^t=x_0-(A^TG(z^t;b_t))-x^t $. Therefore,
\begin{align*}
h^{t+1}&=A^Tm^t-\xi_tq^t \overset{(\ref{m})}{ =}-A^TG(z^{t};b_{t})-\xi_tq^t\\
                                                    & \overset{\eqref{q:q}}{=}-A^TG(z^{t};b_{t})-\xi_t(x^t-x_0 )  =x_0-(A^TG(z^t;b_t))-x^t. 
\end{align*}
This equation is only true when $\xi_t=1.$
                                   Moreover, by the definition of $G$,  we conclude that 
\[
\xi_t \overset{(\ref{xi})}{=} \langle G'(z^t;b_t)\rangle  = \frac{\delta}{\omega}\langle \Phi'(z^t;b_t)\rangle  = 1
\]
Therefore, we showed that the RAMP algorithm is a special case of general recursion, and we can conclude that  the Theorem 2 of \cite{BM11} applies and provides
\begin{align*}
\bar{\sigma}_t^2 &\overset{(\ref{sigma})}{=} \frac{1}{\delta}\EE (\eta(x_0-\bar{\tau}_{t-1}Z,\theta)-x_0))^2
\\
\bar{\tau}_{t}^2&\overset{(\ref{tau})}{=}\EE (G(W-\bar{\sigma}_tZ);b_t)^2.
                       \end{align*}
                       The proof is then completed by a simple observation that $Z$ and $-Z$ have the same distribution.
\end{proof}

\begin{proof}[Proof of Lemma \ref{lem:uniquetau}]

The statement of the lemma follows  if we successfully show that
(a) the total first derivative of $\mathbb{V}(\tau^2,b(\tau),\alpha \tau)$ is strictly positive for $\tau^2$ large enough;
(b) the function $\mathbb{V}$ is concave for all smooth loss functions $\rho$ and not for non-smooth loss functions $\rho$; and 
(c) the $\lim _{\tau \to \infty} \mathbb{V}'(\tau^2,b(\tau) ,\alpha \tau^2)$ is a strictly decreasing function of $\alpha$.

Part (a).\\
According to the definition of $\Phi$ and Condition ({\bf R}), we can represent 
$\frac{\partial \mathbb{V}(\tau^2,b,\theta)}{\partial  (\tau^2,\theta)} $
as 
\begin{align}
&\frac{\delta^2}{\omega^2} \frac{\partial  }{\partial  (\tau^2,\theta)} \EE [\Phi(W +\sigma  Z; b )]
\nonumber \\
&=
\frac{\delta^2}{\omega^2} \frac{\partial  }{\partial  (\tau^2,\theta)}  \left[ b \EE \left[(\upsilon_1 +\upsilon_2) (W +\sigma  Z  )\right] +  b \sum_{\nu=1}^k \alpha_\nu \PP \left\{ r_\nu \leq Prox (W +\sigma  Z ,b) \leq r_{\nu+1}\right\} \right]
\label{eq:derivation1}
\\
&=
\frac{\delta^2}{\omega^2} \left[ b \EE \left[\upsilon_1' (W +\sigma  Z  )  \frac{\partial \sigma }{\partial  (\tau^2,\theta)} Z \right] +  b   \sum_{\nu=1}^k \kappa_\nu \PP  ( q_\nu < W +\sigma  Z \leq q_{\nu+1})  + \right. \nonumber
\\ 
& \left.
+  b \sum_{\nu=1}^k \alpha_\nu  \EE_{x_0,Z
} \left( f (\bar r_{\nu+1})  \frac{\partial \bar r_{\nu+1} }{\partial  (\tau^2,\theta)}- f(\bar r_\nu)  \frac{\partial \bar r_{\nu} }{\partial  (\tau^2,\theta)} \right)     \right]
\nonumber
\end{align}
where $Prox'$ is derivative of the $Prox$ function with respect to its first argument, $f$ is the density of  $W$ and $\bar r_{\nu+1} $ is such that 
\[
  \bar r_{\nu+1} - \Phi(   \bar r_{\nu+1} + \sigma Z ;b) =  r_{\nu+1} -   \sigma Z.
\]
 By integrating the implicit relation above we obtain
 \begin{equation}\label{eq:b0}
 \frac{\partial  \bar r _{\nu+1}}{\partial  (\tau^2,\theta)}  = - Z  \frac{\partial  \sigma}{\partial  (\tau^2,\theta)}   \frac{\partial  \bar Prox (\bar r_{\nu+1} + \sigma Z;b)}{\partial  (\tau^2,\theta)} .
 \end{equation}
 Observe that $Prox$ is a  strongly convex function with bounded level sets. 
  \cite{BM12} derive  $\sigma$ to be  concave and for large $\tau^2 $ strictly increasing. Hence, $  \bar r _{\nu+1}+ \sigma Z$ can be made large and positive for large values of $\tau^2$.  In turn, $ \EE \frac{\partial  \bar r _{\nu+1}}{\partial  (\tau^2,\theta)} $ can be made strictly positive for large values of $\tau ^2$. Together with Condition {({\bf D})}  and convexity of $\rho$ we are ready to conclude that  $\frac{\partial  }{\partial  (\tau^2,\theta)} \EE [\Phi(W +\sigma  Z; b )]$ is strictly positive for large $\tau^2$.

%
By changing the order of differentiation and expectation (allowed by boundedness of functions considered given by Condition ({\bf R})), we obtain
that 
$b$ is defined as a solution to the equation 
$
  \partial_1 \EE [ \Phi (W + \sigma Z;b)] = \frac{\omega}{\delta}
$
(see Lemma \ref{lem:delta} for details).   More specifically,
\begin{align}\label{eq:b1}
  \left [ b \EE  \left[ (\upsilon_1+\upsilon_2) (W + \sigma Z )\frac{\partial \sigma (\tau^2,\theta)}{\partial \tau^2} Z \right]
    +b \sum_{\nu=1}^k \alpha_\nu  \EE_{x_0,Z
} \left( f (\bar r_{\nu+1})  \frac{\partial \bar r_{\nu+1} }{\partial   \tau^2 }- f(\bar r_\nu)  \frac{\partial \bar r_{\nu} }{\partial   \tau^2 } \right)   \right] = \frac{\omega}{\delta}.
\end{align}
Moreover,  as before, in Equation \eqref{eq:derivation1}
\begin{align} \label{eq:b2}
&\frac{\delta^2}{\omega^2} \frac{\partial  }{\partial  b } \EE [\Phi(W +\sigma  Z; b )] \nonumber
\\
&=
\frac{\delta^2}{\omega^2} \left[   \EE \left[ (\upsilon_1 +\upsilon_2 ) (W +\sigma  Z  )  \right] +    \sum_{\nu=1}^k \alpha_\nu  \EE_{x_0,Z
} \left( f (\bar r_{\nu+1})  \frac{\partial \bar r_{\nu+1} }{\partial   \tau^2 }- f(\bar r_\nu)  \frac{\partial \bar r_{\nu} }{\partial   \tau^2 } \right)     \right] \frac{\partial b (\tau^2)}{\partial \tau^2}. 
\end{align}
We focus on the last part of the above display. 
The total derivative of \eqref{eq:b1} provides the implicit 
equation for $\frac{\partial b}{ \partial \tau^2}$,
%
%
\begin{align*}\label{eq:b3}
&   b \EE  \left[ \upsilon_1' (W + \sigma Z )\frac{\partial^2 \sigma (\tau^2,\theta)}{\partial (\tau^2)^2} Z \right]  \nonumber
 +b \sum_{\nu=1}^k \kappa_\nu \PP   ( q_\nu < W +\sigma  Z \leq q_{\nu+1})\frac{\partial^2 \sigma (\tau^2,\theta)}{\partial (\tau^2)^2} Z\\
  &  +b \sum_{\nu=1}^k \alpha_\nu  \EE_{x_0,Z
} \left( f' (\bar r_{\nu+1})  \frac{\partial \bar r_{\nu+1} }{\partial   \tau^2 }- f'(\bar r_\nu)  \frac{\partial \bar r_{\nu} }{\partial   \tau^2 } + f  (\bar r_{\nu+1})  \frac{\partial^2 \bar r_{\nu+1} }{\partial   (\tau^2)^2 }- f(\bar r_\nu)  \frac{\partial^2 \bar r_{\nu} }{\partial   (\tau^2 )^2} \right)   \nonumber
\\
&+   \left [   \EE  \left[ (\upsilon_1 +\upsilon_2)(W + \sigma Z )\frac{\partial \sigma (\tau^2,\theta)}{\partial \tau^2} Z \right]
    + \sum_{\nu=1}^k \alpha_\nu  \EE_{x_0,Z
} \left( f (\bar r_{\nu+1})  \frac{\partial \bar r_{\nu+1} }{\partial   \tau^2 }- f(\bar r_\nu)  \frac{\partial \bar r_{\nu} }{\partial   \tau^2 } \right)   \right]   \frac{\partial b (\tau^2)}{\partial \tau^2}=0.
\end{align*}
Next, we observe that $\upsilon_1, \upsilon_2$ can be made positive for large $\tau^2$ and that $\upsilon_1'$ and $\upsilon_2'$  are positive. By \eqref{eq:b0} we can see that $ \frac{\partial \bar r_{\nu} }{\partial   \tau^2 }  >0$, $ \frac{\partial^2 \bar r_{\nu} }{\partial   (\tau^2 )^2} \geq 0 $,
  and $\frac{\partial^2 \bar r_{\nu} }{\partial   (\tau^2 )^2}  <\frac{\partial^2 \bar r_{\nu+1} }{\partial   (\tau^2 )^2} $ (curvature of a convex function decays away from the origin).
%
Moreover, \cite{BM12} prove  that $\sigma $ is strictly concave for $\alpha >0$. All of the above implies that $\frac{\partial \sigma (\tau^2,\theta)}{\partial \tau^2}  > 0$ for large $\tau^2$; $\frac{\partial^2 \sigma (\tau^2,\theta)}{\partial (\tau^2)^2} \geq 0$.
 Condition {({\bf D})} guarantees $f >0$. Hence, $\frac{\partial b}{ \partial \tau^2}$ can be made positive for large $\tau^2$.
 Next, it suffices to observe that 
 the total derivative of $\mathbb{V}(\tau^2,b,\theta)$ is given by the sum of the above marginal derivatives, all of which can be made positive.

Part(b).\\
Careful inspection of the second derivative of  $\mathbb{V}(\tau^2,b,\theta)$ provides details (by the same arguments above) that the second derivative is  negative, i.e., that the function $\mathbb V$ is concave for all smooth $\rho$ and not necessarily negative for all non-smooth loss functions $\rho$. We show the analysis for one of the marginals as the analysis for the rest is done equivalently. 
\begin{align*}
&\frac{\omega^2}{\delta^2}  \frac{\partial^2 \mathbb{V}(\tau^2,b,\theta)}{\partial (\tau^2)^2} 
\\
&= 
\frac{\partial   }{\partial  \tau^2 } \left[ b \EE \left[(\upsilon_1' +\upsilon_2')(W +\sigma  Z  )  \frac{\partial \sigma }{\partial   \tau^2 } Z \right] +  b \sum_{\nu=1}^k \alpha_\nu  \EE_{x_0,Z
} \left( f (\bar r_{\nu+1})  \frac{\partial \bar r_{\nu+1} }{\partial   \tau^2 }- f(\bar r_\nu)  \frac{\partial \bar r_{\nu} }{\partial   \tau^2 } \right)     \right]
\\
&= \underbrace{b \EE \left[    \upsilon_1^{''} (W +\sigma  Z; b )   Z^2  \left(\frac{\partial \sigma (\tau^2,\theta)}{\partial \tau^2}\right)^2 +  (\upsilon_1' +\upsilon_2')(W +\sigma  Z  )   Z    \frac{\partial^2 \sigma (\tau^2,\theta)}{\partial ( \tau^2)^2}  \right]  }_{T_1}
\\
&+
\underbrace{b \sum_{\nu=1}^k \alpha_\nu  \EE_{x_0,Z
} \left( f' (\bar r_{\nu+1})  \left(\frac{\partial \bar r_{\nu+1} }{\partial   \tau^2 } \right)^2- f'(\bar r_\nu) \left(\frac{\partial \bar r_{\nu } }{\partial   \tau^2 } \right)^2   +  f (\bar r_{\nu+1})  \frac{\partial^2 \bar r_{\nu+1} }{\partial   (\tau^2)^2 }- f(\bar r_\nu)  \frac{\partial^2 \bar r_{\nu} }{\partial   (\tau^2 )^2} \right)}_{T_2}
\end{align*}
 Next, we show that the above display is negative for all smooth $\rho$.
 Observe that for all smooth losses $\rho$, $T_2=0$ and otherwise $T_2 \neq 0$.
 Hence, for the smooth losses, it suffices to show that $T_1 \leq 0$.
  Condition {({\bf R})} provides that $\EE \upsilon_1''$ and $\upsilon_1'$ is negative. Furthermore, $Z$ has a symmetric density   and    $\sigma$ is concave \citep{BM11}; hence, $T_1 <0$.

  Let us know focus on non-smooth loss functions.
  As $f$ is a continuous density,  $f(\bar r_{\nu+1})  < f(\bar r_\nu)$ for all $\bar r_{\nu },\bar r_{\nu+1} \geq 0$ and $f(\bar r_{\nu+1})  > f(\bar r_\nu)$ otherwise. Moreover,  for symmetric densities $f'(\bar r_{\nu+1}) >0$ for all $\bar r_{\nu+1} <0$. Moreover,  $f'(\bar r_{\nu+1}) >f'(\bar r_{\nu }) $  for all $\bar r_{\nu+1} > \bar r_{\nu}$ and $\bar r_{\nu+1},\bar r_{\nu} <0$ . Opposite inequalities will hold on the positive axis with $f'(\bar r_{\nu+1}) <0$ for all $\bar r_{\nu+1} >0$. Additionally, as $\bar r_{\nu}$ is a proxy for a $Prox^{-1}$, it is concave with a negative second derivative ($Prox$ is a convex function). Therefore, the marginal derivative above is necessarily negative.  Hence the sign of $T_2$ will alternate between negative and positive.

Part(c).\\
For part (c), the result of \cite{BM12} provides that 
\[
\lim _{\tau \to \infty} \sigma'(\tau^2,\alpha \tau) = f(\alpha).
\]
Moreover, they show  that  $\sigma$ is strictly  concave for $\alpha >0$.  Hence, $\sigma$ will converge to some $\sigma_{\min}$ when $\tau \to \infty$.

Hence, ${\partial \mathbb{V}(\tau^2,b(\tau),\alpha(\tau))} $ will converge to 
$$ (\delta/\omega)^2f(\alpha)  \EE 2  \left[ \Phi (W +\sigma_{\min} Z;b) \partial_1\Phi (W +\sigma_{\min} Z;b) \right]\left[1- \frac{\partial _{11}\EE \left[\ \Phi (W + \sigma_{\min} Z;b) Z  \right]}{\partial _{21} \EE\left[ \Phi (W + \sigma_{\min} Z;b)\right]} \right], $$
with $\partial_1\Phi = \upsilon_1' + \upsilon_2'$ and $\partial _{11} \EE \Phi $ follows the same formula as $\frac{\partial^2 \mathbb{V}(\tau^2,b,\theta)}{\partial (\tau^2)^2} $ does (see above). Furthermore, $\partial _{21} \EE \Phi $ denotes $\partial_1$ of the function on the right hand side of \eqref{eq:b2}.
Moreover, \cite{BM12} show that $f(\alpha)$ is decreasing function of $\alpha$. Hence,  the above limit is as well, by observing that the remaining terms are independent of $\alpha$. 
%
%

%
\end{proof}

\begin{proof}[Proof of Lemma \ref{lem:delta}]
This proof relies on Lemma \ref{lem:lem22} and  a simple modification of Theorem 2 of \cite{BM11}. This theorem provides a state evolution equation  for a general recursion algorithm. As Lemma \ref{lem:lem22} establishes a connections between our algorithm and general recursion, the proof is then a simple application of Theorem 2 of \cite{BM11}, with a simple relaxation of its conditions.

Let $\bar \tau_t$ and $\bar \sigma_t$ be defined by recursion \eqref{tau}-\eqref{sigma}.
By Lemma \ref{lem:lem22} and with  $b_i^t$   defined therein \eqref{bb}, Theorem 2 of \cite{BM11} states 
\begin{align}\label{eq:clt}
\lim_{n \to \infty} \frac{1}{n} \sum_{i=1}^n \psi(b_i^t,W_i) = \EE \left[ \psi(\bar \sigma_tZ,W)\right]
\end{align}
for any pseudo-Lipschitz function $\psi:\RR^2\to \RR$ of order $k$ and for all $W_i$ with bounded $2k-2$ moments. Careful inspection of the proof of Lemma 5 of \cite{BM11}  shows that if $\psi$  is a function that is uniformly bounded, the restriction on the moments of $W_i$ is unnecessary.  A    version of Hoeffding's  inequality  suffices, as applied to   independent and not-necessarily equally distributed random variables (see Theorem 12.1 in \cite{B06}).

Next, we split the analysis into two cases: $  \Phi$ is differentiable and $\Phi$ is not differentiable. For the first case, it suffices to observe  
 that by Lemma \ref{lem:lem22} we have $b_i^t = W_i-z_i^t$, with $z_i^t$ defined in \eqref{alg:alg3}. Next, we choose $\psi$ to be 
\[
\psi(s,t) = \partial_1 \Phi(t-s;b).
\]
Then, $\psi(b_i^t,W_i) = \partial_1 \Phi(W_i -W_i+z_i^t;b)$ and by Condition ({\bf R}) $\psi$ is a uniformly bounded function. 
Thus, application of the result above provides
\begin{align*}\label{eq:clt2}
\lim_{n \to \infty} \frac{1}{n} \sum_{i=1}^n \partial_1 \Phi(z_i^t;b) = \EE \left[ \partial_1 \Phi( W- \bar \sigma_t Z;b)\right].
\end{align*}
The proof then follows by observing that the right hand side is equal to $\omega/\delta$ by \eqref{alg:alg2}.

Next, we discuss the case of  non-differentiable losses $\rho$. 
Let $h$ be a bandwidth parameter of an estimator of $\nu(b)$.
We define 
\[
S_n(h) =\sum_{i=1}^n \left[ \Phi(z_i^t+n^{-1/2}h;b) - \Phi(z_i^t;b)\right]
\]
for $h \in [0,C]$ for some constant $C$: $0 <C <\infty$. We set 
\[
S_n^o(h)=S_n(h)  - \EE S_n(h), \mbox{ for } h \in [0,C].
\]
Moreover, by Condition~$\mathbf{(R)}$ (i)
\[ 
\Phi(z_i^t;b) = b v_1(Prox(z_i^t,b)) +b v_2(Prox(z_i^t,b)).
\]
Absolutely-continuous term $\nu_1$ can be handled as the  above case; hence, without loss of generality we can assume it is equal to zero.  Hence,
\[
S_n (h) = \sum_{i=1}^n  \sum_{\nu=1}^{k-1}  b   \left[  \mathbbm{1}\left\{ Prox( z_i^t +n^{-1/2}h ,b) \in (r_\nu,r_{\nu+1}) \right\}  -  \mathbbm{1}\left\{ Prox( z_i^t   ,b) \in (r_\nu,r_{\nu+1}) \right\} \right]
\]
and 
\[
\EE S_n (h) = b \sum_{\nu=1}^{k-1} \left[  \PP \left\{ Prox(z_i^t+n^{-1/2}h,b) \in (r_\nu,r_{\nu+1}) \right\} -  \PP \left\{ Prox(z_i^t,b) \in (r_\nu,r_{\nu+1}) \right\}\right].
\]
As $ Prox(z,b)  = z-\Phi(z;b)$, we know the term above can be further written as 
\[
\EE S_n (h) = \sum_{i=1}^n b \sum_{\nu=1}^{k-1} \left[  \PP \left\{  z_i^t+n^{-1/2}h - \Phi(z_i^t+n^{-1/2}h,b) \in (r_\nu,r_{\nu+1}) \right\} -  \PP \left\{ z_i^t-\Phi(z_i^t,b) \in (r_\nu,r_{\nu+1}) \right\}\right].
\]
Then, by the same arguments as for \eqref{eq:clt} we 
obtain
\[
n^{-1/2} S_n^o(h) \to \mathcal{N}(0,\gamma^2(h)),  \mbox{ for }  h \in [0,C],
\]
in distribution,
where for each $h \in [0,C]$, and
\[
\gamma^2(h) =  b \sum_{\nu=1}^{k-1} \alpha_\nu  \lim_{n \to \infty} \frac{1}{n}  \sum_{i=1}^n h \left[ f_{z_i^t-\Phi(z_i^t,b)}  ( r_{\nu+1})  - f_{z_i^t-\Phi(z_i^t,b)}  ( r_{\nu }) \right].
\]
Then, by the arguments of \eqref{eq:clt} we conclude
\[
\gamma^2(h) =  b  h \sum_{\nu=1}^{k-1} \alpha_\nu \left[ f_{W - \bar \sigma_t Z}  ( r_{\nu+1})  - f_{W - \bar \sigma_t Z}  ( r_{\nu }) \right].
\]
The right hand side of the equality above is finite by the Condition~$\mathbf{(R)}$.
To establish a uniform statement, we need to establish the compactness or tightness of the sequence $n^{-1/2} S_n(h)$ for $h \in [0,C]$. This follows by noticing that the sequence is a sequence of differences of two, univariate, empirical distribution functions, both of which weakly converge to a Wiener function (see Lemma 5.5.1 in \cite{JS89}).
Hence,
\begin{equation}\label{eq:clt1}
\sup_{|h| \leq C}   n^{-1/2}\sum_{i=1}^n \left[ \Phi(z_i^t+h;b) - \Phi(z_i^t;b)+ h b \gamma^*\right]  =O_P(n^{-\tau})
\end{equation}
where $\tau=1/2$ for continuous $\psi$ and $\tau=1/4$ for discontinuous $\psi$. In the display above
$ \gamma^*=\sum_{\nu=1}^{k } \left( \alpha_\nu - \alpha_{\nu-1}\right)  f_{W - \bar \sigma_t Z}  ( r_{\nu}) $.
By the definition $\omega/\delta$ is the derivative of a consistent estimator of $\nu(b)=\partial_1 \EE \Phi(z^t;b_t)$. Because of  the equation above, we see that 
\[
\omega/\delta =b \sum_{\nu=1}^{k } \left( \alpha_\nu - \alpha_{\nu-1}\right)  f_{W - \bar \sigma_t Z}  ( r_{\nu}), 
\]
for all consistent estimators of $\nu(b)$ with a bandwidth choice of $h \to 0$ and $ nh \to \infty$.
\end{proof}

\subsection{Proofs for Section \ref{sec:amse}}

\begin{proof}[Proof of Theorem \ref{the:the2}]
The proof is split into two parts. In the first  step, we show that the proposed algorithm belongs to the class of generalized recursions as defined in \cite{BM11}. The result is presented in Lemma \ref{lem:lem22}.

In the second step, we utilize  conditioning technique  and the result of  Theorem 2 of \cite{BM11} designed for generalized recursions.
For an appropriate sequence of vectors  $h_i^t$  of generalized recursions and a $x_0$ the true regression coefficient, they show
\begin{equation}\label{lim:lim}
\underset{p\to\infty}{\operatorname{lim }}\frac{1}{p}\sum_{i=1}^{p}\psi(h_i^{t+1},x_{0,i})= \EE \left[\psi (\bar{\tau}^*_tZ,x_0) \right]
\end{equation}
for a pseudo-Lipschitz function $\psi$. 
We now proceed to identify $x^{t}$ for  a suitable $h_i^t$ of the proposed RAMP algorithm.
By definition of RAMP,
\begin{align}\label{x:x}
x^{t+1}&\overset{(i)}{=}\eta(x^t+A^TG(z^t;b_t);\theta) \overset{(ii)}{=}\eta(x_0-x_0+x^t+A^TG(z^t;b_t);\theta)\overset{(iii)}{=}\eta(x_0-h^{t+1}),
\end{align}
where equation (i) is because of the iteration RAMP,   the equation (ii) is plus and minus a same term and the equation (iii) is the special choice of $h^{t+1}$ in equation(\ref{h:h}).
Therefore, combining $x^t$ in equation (\ref{x:x}) and equation (\ref{lim:lim}), we obtain
\begin{align*}
\underset{p\to\infty}{\operatorname{lim }}\frac{1}{p}\sum_{i=1}^{p}\psi(x_i^t,x_{i,0})&=\underset{p\to\infty}{\operatorname{lim }}\frac{1}{p}\sum_{i=1}^{p}\psi(\eta(x_{0,i}-h_i^t;\theta),x_{0,i})=\EE [\psi(\eta(x_0-\bar{\tau}^*_tZ;\theta),x_0)]
\end{align*}
\end{proof}

\begin{proof}[Proof of Theorem \ref{the:the1}]
In order to prove this result we designed a series of Lemmas $4-15$ provided in the Appendix . The
main part of the proof is provided by the results of Lemma \ref{mainthem}. In the next steps we 
  apply Lemma \ref{mainthem} to  the specific choice of vectors $x = x^t$ and $r = |\hat{x}-x^t|$. We show there exist  constants $c_1,...,c_5 > 0$, such that  for each $\epsilon > 0$ and some iteration $t$,  Conditions $(C1)-(C6)$ of  Lemma \ref{mainthem} hold with probability going to 1 as $p \rightarrow \infty.$\\
  
Condition $(C1)$. We need to show $\||x^t-\hat{x}\||_2\leq c_1\sqrt{p}$.
Lemma \ref{lem:lem22}  proves that the RAMP algorithm  is a special case of a general iterative and recursive scheme, as defined in \cite{BM11}. From \eqref{lim:lim} we choose $\psi(a,b) = a^2$ and obtain 
\[
\underset{t\to \infty}{\operatorname{lim }}\underset{p\to\infty}{\operatorname{lim }}\frac{||x^t||^2}{p} = \EE \{\eta(x_0 + \bar{\tau}^* Z;\theta^*)\}^2< \infty.
\]
 Moreover, we observe that
$
\frac{||\hat{x}||_2}{p} < \infty
$
by assumptions of the Theorem. 

Condition $(C2)$. By the definition of $\hat x$ as the minimizer of the  $\mathcal{L}$, we conclude that $\mathcal{L}(\hat{x}) < \mathcal{L}(x )$ for any $x \neq \hat x$  and this applies for $x=x^t$.  

Condition $(C3)$. We need to show $||sg(\mathcal{L},x^t)||_2\leq\epsilon\sqrt{p}$. By the definition of the RAMP iteration
\begin{equation*}
x^t=\left\{\begin{array}{cc}
A^T G(z^{t-1};b_{t-1}) + x^{t-1} + \theta_{t-1},&\mbox{if } A^T G(z^{t-1};b_{t-1}) + x^{t-1} \geq\theta_{t-1}\\
A^TG(z^{t-1};b_{t-1}) + x^{t-1} - \theta_{t-1},& \mbox{if } A^T G(z^{t-1};b_{t-1}) + x^{t-1} \leq-\theta_{t-1}\\
0& \mbox{otherwise}
\end{array}
\right..
\end{equation*}
This indicates that  when $x^t = 0$
\[
\frac{|A^T G(z^{t-1};b_{t-1}) + x^{t-1}|}{ \theta_{t-1}}\leq 1,
\]
and that in cases of $x^t \neq 0$
\[
A^TG(z^{t-1};b_{t-1})+x^{t-1} = x^t+\mbox{sign}(x^t)\theta_{t-1}.
\]
Therefore, the subgradient $sg(\mathcal{L}, x^t) $ must satisfy
\begin{equation}\label{eq:sub1}
sg(\mathcal{L}, x^t) \equiv \left\{\begin{array}{cc}
\lambda \mbox{sign}(x^t) - A^T\rho'(Y-Ax^t),& \operatorname{if} x^t \neq 0\\
\lambda \frac{A^T G(z^{t-1};b_{t-1}) + x^{t-1}}{\theta_{t-1}} - A^T\rho'(Y-Ax^t),& \operatorname{if} x^t = 0
\end{array}
\right.  .
\end{equation}
Moreover, by equation \eqref{alg:alg3} and Lemma 1
\[
Y-Ax^t = z^t - \Phi(z^{t-1};b_{t-1}).
\]
Then,  
\begin{eqnarray}
A^T\rho'(Y-Ax^t) &=& A^T\rho'(z^t - \Phi(z^{t-1};b_{t-1})) \nonumber
\\ \label{eq:temp25}
& =& A^T\rho'(Prox(z^t,b_t) + \Phi(z^t;b_t)-\Phi(z^{t-1};b_{t-1})),
\end{eqnarray}
where we used the fact that $z^t - \Phi(z^{t-1};b_{t-1}) = Prox(z^t,b_t)$.
Adding equation \eqref{eq:temp25} to \eqref{eq:sub1} and the expression of $sg(\mathcal{L}, x^t)$, we conclude
\begin{equation}
sg(\mathcal{L}, x^t) = \lambda s^t - A^T\rho'(Prox(z^t,b_t) + \Phi(z^t;b_t)-\Phi(z^{t-1};b_{t-1})),
\end{equation}
where 
\begin{equation*}
s^t = \left\{\begin{array}{cc}
\mbox{sign}(x^t) &, \mbox{if } x^t \neq0\\
\frac{A^TG(z^{t-1};b_{t-1})+x^{t-1}}{\theta_{t-1}} &, \mbox{if }  x^t = 0
\end{array}
\right. .
\end{equation*}
Now, we rewrite $sg(\mathcal{L}, x^t)$ as follows
\begin{eqnarray*}
sg(\mathcal{L}, x^t)
& =&\frac{1}{\theta_{t-1}} \left[\lambda\theta_{t-1}s^t - \frac{\lambda\delta b}{\omega}A^T\rho'(Prox(z^t,b_t) + \Phi(z^t;b_t)-\Phi(z^{t-1};b_{t-1}))\right]
 \\
 &
 +&\frac{1}{\theta_{t-1}} \left[ \frac{\lambda\delta b}{\omega}-\theta_{t-1}\right]A^T\rho'(Prox(z^t,b_t) + \Phi(z^t;b_t)-\Phi(z^{t-1};b_{t-1})).
\end{eqnarray*}
Then, by the non-negativity of $\theta_{t-1}$ and triangular inequality
\begin{eqnarray*}
\frac{1}{\sqrt{p}} \left\| sg(\mathcal{L},x^t) \right\|_2
&\leq&  \underbrace{ \frac{\lambda}{\theta_{t-1}\sqrt{p}} \left\| \lambda\theta_{t-1}s^t - \frac{\lambda\delta b}{\omega}A^T\rho'(Prox(z^t,b_t) + \Phi(z^t;b_t)-\Phi(z^{t-1};b_{t-1}))  \right\|_2 } _{A}
\\
&&
+
\underbrace{\frac{|\frac{\lambda\delta b}{\omega}-\theta_{t-1}|}{\theta_{t-1}}\left\| A^T\rho'(Prox(z^t,b_t) + \Phi(z^t;b_t)-\Phi(z^{t-1};b_{t-1}))\right\|_2}_{B}. 
\end{eqnarray*}
We consider the bound of B first. 

Observe that $Prox(z^t,b_t)  = z^t - \Phi(z^t;b_t)$. Then, utilizing \eqref{eq:clt} and \eqref{eq:clt1}, we observe that there exists a $0<q<\sqrt{p}$ such that $\|Prox(z^t,b_t) + \Phi(z^t;b_t)-\Phi(z^{t-1};b_{t-1}) \|_2 \leq q$.
We define $M \equiv \underset{\|z\|_2 \leq q}{\operatorname{sup }} v(z)$, where 
$v(z) = v_1'(z) + v_2'(z)$. Then, by Condition ({\bf R}) (i)-(ii) and (iv) we know that $M <\infty$.
 Then, by Taylor expansion and Triangle inequality, we conclude
\begin{align*}
B &\leq\frac{|\frac{\lambda\delta b}{\omega} -\theta_{t-1}|}{\theta_{t-1}} \frac{1}{\sqrt{p}} \biggl[
\|
A^T\rho'(Prox(z^t,b_t))\|_2+ M \|A\|_2 \| \Phi(z^t;b_t)-\Phi(z^{t-1};b_{t-1})\|_2 \biggl].
\end{align*}
Moreover,
\begin{eqnarray*}
\frac{|\frac{\lambda\delta b}{\omega} -\theta_{t-1}|}{\theta_{t-1}} \frac{1}{\sqrt{p}} ||A^T\rho'(Prox(z^t,b_t))||_2 &=&\frac{|\frac{\lambda\delta b}{\omega} -\theta_{t-1}|}{\theta_{t-1}b_t} \frac{1}{\sqrt{p}} ||A^T\Phi(z^t;b_t)||_2 \\
&\leq& \frac{|\frac{\lambda\delta b}{\omega} -\theta_{t-1}|}{\theta_{t-1}b_t} \frac{1}{\sqrt{p}} \sigma_{max}(A)||\Phi(z^t;b_t)||_2 
\end{eqnarray*}
Next, we observe that the   state evolution (by Lemma 3 and Theorem 1) guarantees, 
\[
\underset{p\to \infty}{\operatorname{lim }}\frac{||\Phi(z^t;b_t)||_2}{p} < \infty.
\]
Moreover, $\sigma_{max}(A)$ is almost surely bounded as $p\to \infty$ \citep{BY93}. Hence, we conclude
$$\underset{t\to \infty}{\operatorname{lim }}\underset{p\to \infty}{\operatorname{lim }} \frac{\sigma_{max}(A)\| \Phi(z^t;b_t) - \Phi(z)\|_2}{\sqrt{p}} = 0. $$ 
Furthermore, using Lemma \ref{kkt} we obtain
\[
\underset{t\to \infty}{\operatorname{lim }}\underset{p\to \infty}{\operatorname{lim }}\frac{|\frac{\lambda\delta b_t}{\omega} -\theta_{t-1}|}{\theta_{t-1}b_t} = \frac{\frac{\lambda \delta b^*}{\omega}-\theta^*}{\theta^*b^*} = 0.
\]
Therefore, B converges to 0 when $p\to \infty $.\\

Now we consider A. From equation  \eqref{eq:sub1}, we conclude that 
\[
\theta_{t-1}s^t - \frac{\delta b^t}{\omega}A^T\rho'(Prox(z^t,b_t)) = \theta_{t-1}s^t s^t - \frac{\delta }{\omega}A^T\Phi(z^t;b_t) = x^t - x^{t-1}.
\]
Plugging into A, we obtain
\begin{eqnarray*}
A &\leq& \frac{\lambda}{\theta_{t-1}\sqrt{p}} \| x^t-x^{t-1} \|_2 + \frac{\lambda \delta b_t}{\omega \theta_{t-1}\sqrt{p}}M\sigma_{max}(A)  \left\|  \Phi(z^t;b_t)-\Phi(z^{t-1};b_{t-1}) \right\|_2.
\end{eqnarray*}
The convergence of the second term is by the convergence of the term B  and the first term is  converging to $0$ by the convergence of the  RAMP algorithm -- that is  the result of Theorem \ref{the:the2} holds.  Therefore, A converges to 0 when $p \to \infty$. This finishes the proof of Condition $(C3)$. 

Condition $(C4)$. This result follows from Lemma \ref{lem:cond4} provided in the Appendix. 

Condition $(C5)$.  Let $A\in \RR^{n\times p}$ be a matrix with i.i.d. entries such that $\EE \{A_{ij}\} = 0$, $\EE \{A_{ij}^2\} = 1/n$, and $n = p\delta.$ Let $\sigma_{max}(A)$ be the largest singular value of A and $\sigma_{min}(A)$ be its smallest non-zero singular value. Then,   \cite{BY93} provide a general result that claims
\begin{align}
\underset{p\to \infty}{\operatorname{lim }} \sigma_{max}(A) = \frac{1+\sqrt{\delta}}{\sqrt{\delta}},
\qquad
\underset{p\to \infty}{\operatorname{lim }}\sigma_{min}(A) = \frac{1-\sqrt{\delta}}{\sqrt{\delta}}.
\end{align}

Condition $(C6)$. Assumption $\bf(R)$ is guaranteeing the validity of $(C6)$.

  Conditions $(C1)$-$(C6)$ are checked and the proof is completed.
\end{proof}


\subsection{Proofs for Section \ref{sec:RE}}

 \begin{proof}[Proof of Theorem \ref{thm:information}]
 For shorter statements, in the   proof of statements (i) and (ii) we use abbreviated notation $\Phi$ for the bivariate function $\Phi (z;b)$. We deviate from this notation in the proof of statement (iii) where $\Phi$ denotes cumulative distribution function of standard normal.
Let $I(F_W)$ be a well defined information matrix of the errors, $W_i$.
If the distribution of the errors $W_i$ is a convolution $D=F_W \circ N(0,\sigma^2)$, then,
\[
\EE_D \Phi'=\omega/\delta.
\]
Observe that $\Phi'$ should be interpreted according to the Lemma \ref{lem:delta}.
 Let the score function for the location of $D$ be denoted with $L_D$. Then, the information matrix of $D$ can be represented as $I(D) = \EE[L_D^2]$ and $\EE_D \Phi' =\EE_G \Phi L_D$.  In turn, simple Cauchy-Swartz inequality provides 
 \[
\tau_t^2=\frac{\omega}{\delta} \EE_G \Phi^2 \geq \frac{\omega}{\delta} \frac{|\EE_G \Phi L_D|^2 }{\EE[L_D^2]} = \frac{\omega}{\delta} \frac{(\EE_D \Phi')^2}{I(D)} = \frac{\omega}{\delta}  \frac{1}{I( F_W \circ N(0,\sigma_t^2))}.
 \]
By Lemma 3.5 of \cite{DM13}, the lower bound can be further reduced to 
\[
\tau_t^2\geq \frac{\omega}{\delta}  \frac{1 + \sigma_t^2 I(F_W) }{I( F_W ) }.
\]
The proof is finalized by obtaining a lower bound of $\sigma_t$.
\[
{\sigma}_t^2  = \frac{1}{\delta} \mathbb{E}\left[\eta(x_0+\tau_{t-1} Z,\theta)-x_0)\right]^2.
\]
For $\theta=\alpha \tau_{t-1}$, Proposition 1.3 of \cite{BM12} shows that $\sigma_t^2$ is a strictly concave function for $\alpha > \alpha_{\min} >0$ and $x_0 \neq 0$ and an increasing  function of  $\tau^2$. 
Hence, $\sigma_t^2 > \tau_{t-1}^2$ for small $\tau_{t-1}^2$ and  $\sigma_t^2 < \tau_{t-1}^2$ for large $\tau_{t-1}^2$.
Hence,
\[
\tau_t^2\geq \frac{\omega}{\delta}  \frac{1 + \tau_{t-1}^2 I(F_W) }{I( F_W ) } \geq \frac{\omega}{\delta}  \frac{1 + \omega/\delta}{I( F_W ) }.
\]
Iterating previous equation $k$ times, we obtain that for $t >k$
\[
\tau_t^2\geq \frac{\omega}{\delta}  \frac{1 + \omega/\delta + (\omega/\delta)^2 + \cdots + (\omega/\delta)^k}{I( F_W ) }.
\]
When $k \to \infty$, $\tau_t^2 \to {\tau^*}^2$, we obtain
\[
 {\tau^*}^2 \geq  \frac{\omega/\delta}{1 - \omega/\delta}\frac{ 1}{I( F_W ) }  = \frac{s}{n-s}\frac{ 1}{I( F_W ) }. 
\]

Part $(iii)$.
Utilizing the scale-invariance property  of the soft-thresholding function $\eta$, we obtain that 
\begin{align*}
\nu_1(\tau) &=\alpha^2 \PP \left( \left| Z+ \frac{x_0}{\tau}\right| \geq \alpha  \right) -2\alpha \EE\left[ Z \mbox{sign}(Z) \mathbbm{1} \left\{ \left| Z+ \frac{x_0}{\tau}\right| \geq \alpha \right\}\right] + \EE \left[ Z^2 \mathbbm{1} \left\{ \left| Z+ \frac{x_0}{\tau}\right| \geq \alpha \right\}\right],
\\
\nu_2(\tau) &=  \EE\left[ {x_0^2}  \mathbbm{1} \left\{ \left| Z+ \frac{x_0}{\tau}\right| \leq \alpha \right\} \right].
\end{align*}
Let us first focus on the second component, i.e., $\nu_2(\tau)$.  The derivative of $\nu_2(\tau)$ is 
\[
\frac{\partial \nu_2(\tau) }{ \partial \tau} = \EE_{x_0} \left[ \frac{x_0^3}{\tau^2} \left(\phi( \alpha -\frac{x_0}{\tau}) -  \phi(-\alpha -\frac{x_0}{\tau}) \right)\right].
\]
By observing that the last term on the RHS is non-negative for all $x_0 >0$ and negative for all $x_0 <0$, we conclude that $\nu_2(\tau)$ is an increasing function.

We conclude the proof with the analysis of the first term, $\nu_1(\tau)$. The displays  above imply that 
the first and the last term  of $\nu_1(\tau)$ together lead to $ \EE \left[  (Z^2 + \alpha^2) \mathbbm{1} \left\{ \left| Z+ \frac{x_0}{\tau}\right| \geq \alpha \right\}\right]$, whereas the middle term can be written as 
 \[
  2 \alpha \EE \left[ Z \mathbbm{1} \left\{  Z \leq \alpha -   \frac{x_0}{\tau} \right\}\right] + 2\alpha\EE \left[ Z \mathbbm{1} \left\{  Z \leq -\alpha -   \frac{x_0}{\tau} \right\}\right].
\]
By Stein's lemma we know  that the previous expression is equal to 
$
  2 \alpha  \EE_{x_0} \left[  \phi( \alpha -   \frac{x_0}{\tau}) - \phi(-\alpha -   \frac{x_0}{\tau})   \right] .
$
Furthermore, utilizing the  variance  computation  of a truncated random variable, 
 conditional on $x_0$,  it is easy to check that 
 \[
 \EE \left[ Z^2 \mathbbm{1} \left\{ Z+ \frac{x_0}{\tau}  \leq \alpha \right\}\right] =  1 - \left( \alpha -   \frac{x_0}{\tau}\right) \phi\left( \alpha -   \frac{x_0}{\tau}\right).
 \]
The rest of terms can be computed similarly. Combining all of  the above we obtain 
\begin{align*}
\nu_1(\tau) 
&= \EE_{x_0} \left[  \alpha^2 +1 - \alpha^2  \left[ \Phi(\alpha -   \frac{x_0}{\tau})  - \Phi(-\alpha -   \frac{x_0}{\tau})\right]  \right. \\ 
&  \left. -   \alpha    \left[  \phi( \alpha -   \frac{x_0}{\tau}) + \phi(-\alpha -   \frac{x_0}{\tau})   \right]  - \frac{x_0}{\tau} \left[  \phi( \alpha -   \frac{x_0}{\tau}) - \phi(-\alpha -   \frac{x_0}{\tau})   \right] \right].
\end{align*}
Evaluating the derivative of $\nu_1(\tau)$, we obtain 
\[
\frac{\partial \nu_1(\tau) }{ \partial \tau} = \EE_{x_0} \left[ \frac{x_0}{\tau^2} \left( 1- \frac{x_0^2}{\tau^2}\right) \left( \phi(\alpha - \frac{x_0}{\tau}) - \phi(- \alpha - \frac{x_0}{\tau}) \right) \right].
\]
Hence, for small $\tau^2$ the expression above is negative and for large values of $\tau^2$ it is positive. It follows that, $\nu_1(\tau)$ is a convex function of $\tau^2$.

\end{proof}

\begin{proof}[Proof of Lemma \ref{lem:prep1}]
 
Notice that in sparse, high dimensional setting,  the distribution of the $x_0$ can be represented as a convex combination of the Dirac measure at 0 and a measure that doesn't have mass at zero. Let us denote with $\Delta$ and $U$ two random variables, each having the two measures above. 
Let 
\[
 \Psi_{\alpha}(\tau ) = \frac{1- \omega}{\delta} \EE \eta^2 (Z ,\alpha) + \frac{\omega}{\delta} 
\EE  \left[ \eta\left(\frac{U}{\tau }+Z;\alpha\right)- \frac{U}{\tau }\right]^2. \]

First, 
we prove that whenever $\sigma_{\mbox{\tiny $W$}}^2 \to 0$ then $\tau_{\mbox{\tiny P-LAD}}^2 \to 0$ as long as $\lim_{\tau \to 0}\Psi_\alpha(\tau) \neq 0$.
To accomplish this, let's prove that $\lim_{\tau \to 0}\Psi_\alpha(\tau) \neq 0$ and  look at the relationship between $\tau_{\mbox{\tiny P-LAD}}$ and $\sigma_{\mbox{\tiny $W$}}$.

Notice that by the result of Theorem 4 of \cite{Z15}, we conclude
\[
\lim_{\tau \to 0}\Psi_\alpha(\tau) = \frac{\omega}{\delta},
\]
which is different from $0$ whenever $s \neq 0$.

Observe that whenever $\sigma_{\mbox{\tiny $W$}}^2 \to 0$, it holds that  $Y \to \sigma_{\mbox{\tiny P-LAD}}^2 Z$ and $f (\mbox{\footnotesize $W$};  \tau^2_{\mbox{\tiny P-LAD}}) \to 0$. In this case 
\begin{equation}\label{eq:t4}
\tau_{\mbox{\tiny P-LAD}}^2  \left( 1- \Psi_\alpha^{-1}   \tilde g(\tau^2_{\mbox{\tiny P-LAD}}) \right)  =b^2 \frac{\PP \left( \left|  \tau_{\mbox{\tiny P-LAD}}^2 \Psi_\alpha Z \right| >b  \right)}{ \PP^2 \left(  \left| \tau_{\mbox{\tiny P-LAD}}^2 \Psi_\alpha Z\right| \leq b \right)},
\end{equation}
where 
\[
\tilde g(\tau^2_{\mbox{\tiny P-LAD}}) = \EE_Z \left[ Z^2 \left(F_W (b - \tau_{\mbox{\tiny P-LAD}} \Psi_\alpha^{1/2}  Z) - F_W(-b - \tau_{\mbox{\tiny P-LAD}}  \Psi_\alpha^{1/2} Z ) \right)\right] /\PP^2 \left(  \left| \tau_{\mbox{\tiny P-LAD}}^2 \Psi_\alpha Z\right| \leq b \right).
\]
Hence,
 $\tilde g(0) =\EE_Z \left[ Z^2 \left(F_W (b  ) - F_W(-b   ) \right)\right]   = F_W (b  ) - F_W(-b   )  < \infty$.
In turn, by plugging in $\tau_{\mbox{\tiny P-LAD}}  =0$ it satisfies both sides of the equation \eqref{eq:t4}.
\end{proof}

\begin{proof} [Proof of Lemma \ref{lem:Plad}]

Notice that in sparse, high dimensional setting,  the distribution of the $x_0$ can be represented as a convex combination of the Dirac measure at 0 and a measure that doesn't have mass at zero. Let us denote with $\Delta$ and $U$ two random variables, each having the two measures above. 
Let 
\[
\Psi_{\alpha} =\Psi_{\alpha}(\tau_{\mbox{\tiny P-LAD}}) = \frac{1- \omega}{\delta} \EE \eta^2 (Z ,\alpha) + \frac{\omega}{\delta} 
\EE  \left[ \eta\left(\frac{U}{\tau_{\mbox{\tiny P-LAD}}}+Z;\alpha\right)- \frac{U}{\tau_{\mbox{\tiny P-LAD}}}\right]^2. \]
We first discuss the P-LAD estimator.
By the state-evolution recursion, \eqref{sigma} 
\begin{equation}\label{eq:t3}
\tau_{\mbox{\tiny P-LAD}}^2 \Psi_\alpha =\sigma_{\mbox{\tiny P-LAD}}^2.
\end{equation}
Let $Y = W + \sigma_{\mbox{\tiny P-LAD}}^2 Z$. According to \eqref{eq:tau-tau},
\begin{equation}\label{eq:t1}
\tau_{\mbox{\tiny P-LAD}}^2 = \frac{\EE[Y^2 \mathbbm{1}{|Y| \leq b}] + b^2 \PP(|Y| >b)}{ \PP^2 (|Y| \leq b)}.
\end{equation}

Next observe that 
$
\EE[Y^2 \mathbbm{1}{|Y| \leq b}]  = \EE [W^2\mathbbm{1}{|Y| \leq b}] +\sigma_{\mbox{\tiny P-LAD}}^2\EE[Z \mathbbm{1}{|Y| \leq b}] 
$; 
 moreover,
$
\EE[Y^2 \mathbbm{1}{|Y| \leq b}]  =  \sigma_{\mbox{\tiny $W$}}^2 - \EE[ W^2\mathbbm{1}{|Y|  > b}] +\sigma_{\mbox{\tiny P-LAD}}^2\EE[Z \mathbbm{1}{|Y| \leq b}] .
$
Plugging into \eqref{eq:t1} we obtain
\begin{equation}\label{eq:t2}
\tau_{\mbox{\tiny P-LAD}}^2 = \sigma_{\mbox{\tiny P-LAD}}^2 \frac{\EE[Z \mathbbm{1}{|Y| \leq b}] }{ \PP^2 (|Y| \leq b)} + \frac{\sigma_{\mbox{\tiny $W$}}^2 - \EE[ W^2\mathbbm{1}{|Y|  > b}] }{ \PP^2 (|Y| \leq b)} + \xi(b)
\end{equation}
for 
\[
\xi(b) = b^2 \frac{\PP \left( \left| W +\sigma_{\mbox{\tiny P-LAD}}^2 Z \right| >b  \right)}{ \PP^2 \left(  \left| W +\sigma_{\mbox{\tiny P-LAD}}^2 Z\right| \leq b \right)}.
\]
Let 
\[
g(\tau^2_{\mbox{\tiny P-LAD}}) =\frac{\EE[Z \mathbbm{1}{|Y| \leq b}] }{ \PP^2 (|Y| \leq b)}, \qquad f (\mbox{\footnotesize $W$};  \tau^2_{\mbox{\tiny P-LAD}})  = \frac{\sigma_{\mbox{\tiny $W$}}^2 - \EE[ W^2\mathbbm{1}{|Y|  > b}] }{ \PP^2 (|Y| \leq b)},
\]
then
\begin{equation}\label{eq:t4}
\tau_{\mbox{\tiny P-LAD}}^2 = \sigma_{\mbox{\tiny P-LAD}}^2 g(\tau^2_{\mbox{\tiny P-LAD}})  +f (\mbox{\footnotesize $W$};  \tau^2_{\mbox{\tiny P-LAD}}) + \xi(b).
\end{equation}
Substituting \eqref{eq:t4} in \eqref{eq:t3} we obtain
\begin{equation}\label{eq:t5}
\frac{\sigma_{\mbox{\tiny P-LAD}}^2}{\sigma_{\mbox{\tiny $W$}}^2} = \frac{\Psi_{\alpha}}{1- g(\tau^2_{\mbox{\tiny P-LAD}}) \Psi_{\alpha}} \left[ \frac{f (\mbox{\footnotesize $W$};  \tau^2_{\mbox{\tiny P-LAD}}) }{\sigma_{\mbox{\tiny $W$}}^2}  + \frac{\xi(b) }{\sigma_{\mbox{\tiny $W$}}^2} \right].
\end{equation}

By Stein's lemma and some algebra we arrive at the representation of $g(\tau^2_{\mbox{\tiny P-LAD}}) $ and $f (\mbox{\footnotesize $W$};  \tau^2_{\mbox{\tiny P-LAD}}) $, as 
\begin{align*}
g(\tau^2_{\mbox{\tiny P-LAD}}) &= \EE_Z \left[ Z^2 \left(F_W (b - \sigma_{\mbox{\tiny P-LAD}}Z) - F_W(-b - \sigma_{\mbox{\tiny P-LAD}}Z ) \right)\right] / \PP^2 (|Y| \leq b),
\\
f (\mbox{\footnotesize $W$};  \tau^2_{\mbox{\tiny P-LAD}})  &= \EE_{W} \left[ W^2 \left(\Phi \left(\frac{b-W}{\sigma_{\mbox{\tiny P-LAD}}}\right)-\Phi\left(\frac{-b-W}{\sigma_{\mbox{\tiny P-LAD}}}\right) \right)\right]/ \PP^2 (|Y| \leq b).
\end{align*}

Let us first focus on the case of $\sigma_{\mbox{\tiny $W$}}^2 \to 0$.  By Lemma \ref{lem:prep1} we conclude that $\tau_{\mbox{\tiny P-LAD}}^2 \to 0$ and $\sigma_{\mbox{\tiny P-LAD}}^2 \to 0$. Hence,
\[
\lim _{\sigma_{\mbox{\tiny $W$}}^2 \to 0}  \frac{\sigma_{\mbox{\tiny P-LAD}}^2}{\sigma_{\mbox{\tiny $W$}}^2}  
=
\lim _{\tau \to 0, \sigma_{\mbox{\tiny $W$}}^2 \to 0} \frac{\Psi_{\alpha}(\tau)}{1- g(\tau ) \Psi_{\alpha}(\tau)} \left[ \frac{f (\mbox{\footnotesize $W$};  \tau ) }{\sigma_{\mbox{\tiny $W$}}^2}  + \frac{\xi(b) }{\sigma_{\mbox{\tiny $W$}}^2} \right].
\]
We proceed to show that the last term in the display above is converging to $\infty$.
Observe that whenever $\sigma_{\mbox{\tiny $W$}}^2 \to 0$,  it holds that $Y \to \sigma_{\mbox{\tiny P-LAD}}^2 Z$  and 
\[
\xi(b) \to  b^2 \frac{\PP \left( \left|  \tau ^2 \Psi_\alpha(\tau)  Z \right| >b  \right)}{ \PP^2 \left(  \left| \tau ^2 \Psi_\alpha(\tau)  Z\right| \leq b \right)}.
\]
Furthermore, with $\sigma_{\mbox{\tiny P-LAD}} \to 0$ and $b>0$, it holds that $\xi(b)\to 0$. For $\phi$ denoting the density of the standard normal, the application of  Lohpital's rules guarantees 
\begin{align*}
 \lim _{\sigma \to 0, \sigma_{\mbox{\tiny $W$}}^2 \to 0}\frac{  \xi(b)   }{\sigma_{\mbox{\tiny $W$}}^2 }
&= b^2
\lim _{\sigma \to 0, \sigma_{\mbox{\tiny $W$}}^2 \to 0} \frac{  \phi (b /  \sigma) \sigma^{-2} + \phi (-b /  \sigma)  \sigma^{-2}  }{4\sigma_{\mbox{\tiny $W$}} },
\end{align*}
which implies  $\frac{\xi(b) }{\sigma_{\mbox{\tiny $W$}}^2}  \to \infty$ as $\sigma_{\mbox{\tiny $W$}} \to 0$.
%
%
%
%

We finish the proof by discussing the P-LS estimator. By Lemma \ref{lem:lem22} we see that the special case of the RAMP algorithm, when the loss function $\rho(x) =(x)^2$  is the approximate message passing algorithm of \cite{BM12}. Hence, results that apply to the algorithm in \cite{BM12} apply. In particular, a recent work \cite{Z15} discusses the properties of $\lim _{ \sigma_{\mbox{\tiny $W$}}^2 \to 0} \frac{\bar\sigma_{\mbox{\tiny P-LS}}^2}{\sigma_{\mbox{\tiny $W$}}^2}$ in their Theorem 7.  

\end{proof}

\begin{proof}[Proof of Lemma \ref{lem:Plad2}]

We will use the notation defined in the proof of Lemma \ref{lem:Plad}.
We first discuss the Penalized LAD estimator. 
Based on the representation proved in Lemma \ref{lem:Plad}
\[
\lim _{\sigma_{\mbox{\tiny $W$}}^2 \to \infty}  \frac{\sigma_{\mbox{\tiny P-LAD}}^2}{\sigma_{\mbox{\tiny $W$}}^2}  
=
\lim _{\tau \to \infty, \sigma_{\mbox{\tiny $W$}}^2 \to \infty} \frac{\Psi_{\alpha}(\tau)}{1- g(\tau ) \Psi_{\alpha}(\tau)} \left[ \frac{f (\mbox{\footnotesize $W$};  \tau ) }{\sigma_{\mbox{\tiny $W$}}^2}  + \frac{\xi(b) }{\sigma_{\mbox{\tiny $W$}}^2} \right].
\]
It suffices to discuss the limiting properties of the first, second and the third term in the right hand side above.
Let us discuss the last term first. 
Observe that we can rewrite 
\begin{align*}
\lim _{\tau \to \infty, \sigma_{\mbox{\tiny $W$}}^2 \to \infty} \frac{\xi(b) }{\sigma_{\mbox{\tiny $W$}}^2} 
&=
\lim _{\sigma \to \infty} \frac{\xi(b) }{\sigma^2}
=
\lim _{\sigma \to \infty} \frac{b^2 \frac{\PP \left( \left| W +\sigma_{\mbox{\tiny P-LAD}}^2 Z \right| >b  \right)}{ \PP^2 \left(  \left| W +\sigma_{\mbox{\tiny P-LAD}}^2 Z\right| \leq b \right)} }{ \frac{\EE \Phi^2 ( W +\sigma_{\mbox{\tiny P-LAD}}^2 Z ;b)}{ \PP^2 \left(  \left| W +\sigma_{\mbox{\tiny P-LAD}}^2 Z\right| \leq b \right)}}
\\
&=
b^2 \lim _{\sigma \to \infty} \frac{b^2 {\PP \left( \left| W +\sigma_{\mbox{\tiny P-LAD}}^2 Z \right| >b  \right)} }{ {\EE \Phi^2 ( W +\sigma_{\mbox{\tiny P-LAD}}^2 Z ;b)}  } =1,
\end{align*}
where in the last step we used the fact that when $\tau \to \infty$, $W +\sigma_{\mbox{\tiny P-LAD}}^2 Z  \to \infty$
\[
{\EE \Phi^2 ( \infty;b)} = b^2 \lim_{\sigma \to \infty}\mathbbm{1} \{ W +\sigma^2 Z  \geq b\} =b^2.
\]

Next, we discuss the limit of $\Psi_\alpha(\tau)$.  Corollary 6 of \cite{Z15} guarantees that $\lim_{\tau \to \infty} \Psi_\alpha(\tau) = \EE \eta^2(Z ; \alpha)/\delta$, that is, $\Psi_{\alpha}(\infty) = \Gamma/\delta$.

In the following, we analyze the limit of   
\[
g(\tau) =\frac{ \EE_Z \left[ Z^2 F_W (b -\sigma Z) - Z^2 F_W(-b - \sigma Z)\right] }{ \PP^2 \left( |W + \sigma Z |\leq b\right)}
\]
as $\tau \to \infty$.
In view of the fact that, both the numerator and denominator of $g(\tau)$
converge to $0$ when $\tau \to \infty$, we   use the L'H\~{o}pital's rule in determining its limit.
Therefore, 
\[
\lim_{\tau \to \infty} g(\tau) 
=
\lim_{\tau \to \infty}  \frac{ \EE_Z \left[ -Z^3 f_W (b -\sigma Z) + Z^3 f_W(-b - \sigma Z)\right] }{2 \PP \left( W + \sigma Z \leq b\right) (F_W(b-\sigma Z) +1 - F_W(-b-\sigma Z) )}.
\]
Moreover, the last expression still needs L'H\~{o}pital's rule. Hence,
\[
\lim_{\tau \to \infty} g(\tau) 
=
\lim_{\tau \to \infty}  \frac{ \EE_Z \left[  Z^4 f_W' (b -\sigma Z) - Z^4 f_W'(-b - \sigma Z)\right] }{2  (F_W(b-\sigma Z) +1 - F_W(-b-\sigma Z) )}=0.
\]
The proof is finalized by the analysis of  $ {f (\mbox{\footnotesize $W$};  \tau ) }/{\sigma_{\mbox{\tiny $W$}}^2}$, when $\sigma_{\mbox{\tiny $W$}}^2 \to \infty$ and $\tau \to \infty$.
We begin with the following representation of ${f (\mbox{\footnotesize $W$};  \tau ) }$,
\[
 {f (\mbox{\footnotesize $W$};  \tau ) }=\frac{\EE _ W \left[ W^2 \Phi_Z(\frac{b-W}{\sigma}) + W^2  - W^2\Phi_Z(\frac{-b-W}{\sigma}) \right]  } { \PP^2 \left( |W + \sigma Z |\leq b\right)}.
\]
We observe that in the limit  when $\tau \to \infty$, of the above expression  takes the form $0/0$; hence, we apply the L'H\~{o}pital's rule to obtain
\begin{align*}
&\lim_{\sigma \to \infty, \sigma_{\mbox{\tiny $W$}}^2 \to \infty}  \frac{f (\mbox{\footnotesize $W$};  \tau ) } {\sigma_{\mbox{\tiny $W$}}^2}\\
&=\lim_{\sigma \to \infty, \sigma_{\mbox{\tiny $W$}}^2 \to \infty} 
\frac{  \left( \EE _ W \left[ -W^2 \phi_Z(\frac{b-W}{\sigma}) (b-W)/\sigma^2  - W^2\phi_Z(\frac{-b-W}{\sigma} ) (b+W) /\sigma^2 \right] \right) / 2 \sigma_{\mbox{\tiny $W$}}}{2 \PP \left(| W + \sigma Z |\leq b\right)  \EE_W \left[-\phi_Z(\frac{b-W}{\sigma}) (b-W) /\sigma^2  - \phi_Z(\frac{-b-W}{\sigma})  (b+W)/ \sigma^2\right]}
\\
&=\lim_{\sigma \to \infty, \sigma_{\mbox{\tiny $W$}}^2 \to \infty} \frac{1}{4}\frac{ \EE _ W \left[  W^2 \phi_Z(\frac{b-W}{\sigma}) (b-W)^2   - W^2\phi_Z(\frac{-b-W}{\sigma} ) (b+W)^2   \right]  /\sigma}{\EE^2_W \left[-\phi_Z(\frac{b-W}{\sigma}) (b-W)  - \phi_Z(\frac{-b-W}{\sigma})  (b+W) \right]}
+o(1)
\\
=
&\lim_{\sigma \to \infty, \sigma_{\mbox{\tiny $W$}}^2 \to \infty} \frac{1}{64 b^2 }\frac{ \EE _ W \left[  W^2 \phi_Z(\frac{b-W}{\sigma}) (b-W)^2   - W^2\phi_Z(\frac{-b-W}{\sigma} ) (b+W)^2   \right]  /\sigma}{\EE^2_Z \left[-\sigma Z f_W(\sigma Z)      \right]}
+o(1)
\end{align*}
where in the last step we used the change of variables to go from $\EE_W$ to $\EE_Z$. The last expression converges to zero as both $\sigma \to \infty, \sigma_{\mbox{\tiny $W$}}^2 \to \infty$.

\end{proof}







\subsection{Auxiliary Results}

This section gathers results used throughout the proofs. They are of secondary interest, so we present them in this Appendix section.
 
\begin{lemma}\label{mainthem}
Let $r$ and $x$ be vectors in $\mathbb{R}^p$, $\mathcal{L}$   defined in Problem (\ref{eqDef}) and $sg(\mathcal{L},x)\in \partial\mathcal{L}(x)$   the subgradient of $\mathcal{L}$ with respect to $x$. For any $c_1,...c_5 >0$, if the following Conditions 1-5 hold, then there exists a function $\xi(\epsilon,c_1,...,c_5)\to 0$ as $\epsilon\to 0$ such that $||r||_2\leq\sqrt{p} \xi(\epsilon, c_1,...,c_5)$. 

The conditions are:
\begin{itemize}
\item[(C1)] $\| r \|_ 2 \leq c_1 \sqrt{p}$;
\item[(C2)] $\mathcal{L}(x + r) \leq \mathcal{L}(x)$;  
\item[(C3)] There exists a $sg(\mathcal{L},x)\in \partial \mathcal{L}(x)$ with $||sg(\mathcal{L},x)||_2 \leq \sqrt{p}\epsilon$;
\item[(C4)]Let $v \equiv \frac{1}{\lambda}[\sum_{i=1}^{p}\rho'(Y_i-A_i^Tx)A_i+sg(\mathcal{L},x)] \in\partial||x||_1$, {and $S(c_2)\equiv \{i\in[p]:|v_i|\geq1-c_2\}$. Then, for any $S'\subseteq [p], |S'|\leq c_3p$, we have $\sigma_{min}( \{A\}_{S(c_2)\cup S'})\geq c_4$};
\item[(C5)] The maximum and minimum non-zero singular value of A satisfy $\frac{1}{c_5}\leq {\sigma}_{min}( A)^2\leq\sigma_{max}( A)^2\leq c_5$;
\item[(C6)] For all such vectors $r$, the loss function $\rho$ satisfies  $E _i  =   \mathbb{E}(v_1'( W_i ))    \geq k_1 $ for a constant $k_1>0$.
\end{itemize}
\end{lemma}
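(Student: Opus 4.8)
The plan is to turn the two near‑optimality hypotheses — that $x+r$ does not increase $\mathcal{L}$ (C2) and that $\mathcal{L}$ has an almost‑vanishing subgradient at $x$ (C3) — into a quantitative control of $\|r\|_2$, by combining a Bregman‑divergence decomposition of $\mathcal{L}$ with the restricted invertibility of $A$ supplied by (C4)–(C5) and the curvature of $\rho$ supplied by (C6).

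First I would pass to Bregman divergences. Write $e_i=Y_i-A_i^Tx$ and let $v\in\partial\|x\|_1$ be the $\ell_1$‑subgradient of (C4), so $sg(\mathcal{L},x)=-\sum_i\rho'(e_i)A_i+\lambda v$ with $\|sg(\mathcal{L},x)\|_2\le\epsilon\sqrt p$. Convexity of $\mathcal{L}$ gives $\mathcal{L}(x+r)\ge\mathcal{L}(x)+\langle sg(\mathcal{L},x),r\rangle$, so by (C2), Cauchy–Schwarz and (C1)
\[
0\ \le\ D_\rho(r)+D_P(r)\ \le\ -\langle sg(\mathcal{L},x),r\rangle\ \le\ \|sg(\mathcal{L},x)\|_2\,\|r\|_2\ \le\ c_1\epsilon\,p,
\]
where $D_\rho(r)=\sum_i\bigl[\rho(e_i-A_i^Tr)-\rho(e_i)+\rho'(e_i)A_i^Tr\bigr]\ge0$ and $D_P(r)=\lambda\sum_j\bigl[|x_j+r_j|-|x_j|-v_jr_j\bigr]\ge0$; hence each is at most $c_1\epsilon p$. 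From $D_P(r)\le c_1\epsilon p$ I would extract the cone inequality: on $S(c_2)^c$ one has $|v_j|<1-c_2$, which forces $x_j=0$, so the $j$‑th summand equals $|r_j|-v_jr_j\ge c_2|r_j|$ and therefore $\|r_{S(c_2)^c}\|_1\le c_1\epsilon p/(\lambda c_2)$. Letting $S'$ be the set of the $\lfloor c_3p\rfloor$ largest‑magnitude coordinates of $r$ inside $S(c_2)^c$ and $S''=S(c_2)^c\setminus S'$, the standard top‑$k$ estimate gives $\|r_{S''}\|_2\le\|r_{S(c_2)^c}\|_1/\sqrt{\lfloor c_3p\rfloor}=O(\epsilon\sqrt p)$. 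With $T=S(c_2)\cup S'$ (so $T,S''$ partition $[p]$, $|T|\le|S(c_2)|+c_3p$, $\sigma_{\min}(\{A\}_T)\ge c_4$ by (C4)), writing $Ar=A_Tr_T+A_{S''}r_{S''}$ and using (C5) yields $\|r_T\|_2\le c_4^{-1}\|A_Tr_T\|_2\le c_4^{-1}\bigl(\|Ar\|_2+\sqrt{c_5}\,\|r_{S''}\|_2\bigr)$, so everything reduces to showing $\|Ar\|_2=\sqrt p\,\zeta(\epsilon)$ with $\zeta(\epsilon)\to0$.

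\textbf{The main obstacle} is this last step, a restricted‑strong‑convexity bound: $\rho$ is only convex with a \emph{bounded} subgradient (Condition (R)), so $D_\rho(r)$ supplies curvature only on the samples where the increment $A_i^Tr$ stays bounded. Writing $D_\rho(r)=\sum_i g_i\bigl((Ar)_i\bigr)$ with $g_i$ convex, nonnegative, and flat to first order at the origin, I would split $[n]$ into a "good" set $B$ where both $|e_i|$ and $|(Ar)_i|$ are below thresholds $M_1,M_2$ and its complement. On $B$, Condition (R)(i) together with (C6) — which makes the smooth part $\upsilon_1$ of $\rho'$ carry average curvature at least $k_1$ — and a concentration argument of the type used in the proof of Lemma~\ref{lem:delta} guarantee that a bounded‑below fraction of $B$ has local curvature at least $k_1/2$, giving $\sum_{i\in B}g_i\bigl((Ar)_i\bigr)\ge\kappa\sum_{i\in B}(Ar)_i^2$ for a constant $\kappa>0$. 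For $B^c$, the a priori ball bound $\|r\|_2\le c_1\sqrt p$ (C1) and $\sigma_{\max}(A)^2\le c_5$ (C5) bound the cardinality of $\{i:|(Ar)_i|>M_2\}$ by $c_5c_1^2p/M_2^2$ and, via a tail/peeling estimate made uniform over the cone containing $r$, bound $\sum_{i\in B^c}(Ar)_i^2$ by $\varepsilon_1(M_1,M_2)\,p$ with $\varepsilon_1\to0$ as the thresholds grow (the set $\{i:|e_i|>M_1\}$ is treated identically).

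Putting these together, $\|Ar\|_2^2\le\kappa^{-1}D_\rho(r)+\sum_{i\in B^c}(Ar)_i^2\le\kappa^{-1}c_1\epsilon p+\varepsilon_1(M_1,M_2)\,p$, and choosing $M_1,M_2$ to grow slowly as $\epsilon\to0$ makes the right‑hand side at most $p\,\zeta(\epsilon)^2$ with $\zeta(\epsilon)\to0$. Feeding this back through the singular‑value step, $\|r\|_2\le\|r_T\|_2+\|r_{S''}\|_2=\sqrt p\,\xi(\epsilon,c_1,\dots,c_5)$ with $\xi\to0$ as $\epsilon\to0$, which is the claim. The delicate bookkeeping is entirely in balancing $M_1,M_2$ against $\epsilon$ so that the samples on which $\rho$ is effectively linear neither erode the curvature constant $\kappa$ coming from (C6) nor contribute a non‑negligible share of $\|Ar\|_2^2$; this is exactly where the boundedness of $\|r\|_2$ in (C1) and the operator‑norm control in (C5) do the work that ordinary strong convexity would do in the classical $p<n$ theory.
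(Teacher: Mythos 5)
Your overall skeleton is the same as the paper's (both follow the strategy of Lemma 3.1 of \cite{BM12}): the optimality gap plus the near-stationarity of $x$ give $D_\rho(r)+D_P(r)\le c_1\epsilon p$, the penalty divergence yields the cone condition $\|r_{S(c_2)^c}\|_1=O(\epsilon p)$, and (C4)--(C5) reduce everything to showing $\|Ar\|_2^2/p\to 0$. That part of your argument is sound. The difference, and the problem, is in how you establish the restricted strong convexity of the empirical loss, i.e.\ the lower bound $D_\rho(r)\gtrsim \kappa\|Ar\|_2^2$.

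Your truncation argument has a genuine gap. From $\|Ar\|_2^2\le c_5c_1^2p$ you correctly bound the \emph{cardinality} of $\{i:|(Ar)_i|>M_2\}$ by $c_5c_1^2p/M_2^2$, but this does not bound $\sum_{i\in B^c}(Ar)_i^2$ by $\varepsilon_1(M_1,M_2)\,p$ with $\varepsilon_1\to0$: the coordinates in $B^c$ are precisely the large ones, and without a delocalization property of $Ar$ over the cone they can carry an order-one fraction of $\|Ar\|_2^2$ (in which case the curvature harvested on $B$ controls nothing). Likewise, the claim that a ``bounded-below fraction of $B$ has local curvature at least $k_1/2$'' must hold \emph{uniformly} over the data-dependent $r$, and the coordinates where the curvature is realized must overlap the coordinates where $(Ar)_i^2$ lives; neither is supplied by pointwise concentration. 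The paper avoids both issues by proving a uniform asymptotic quadratic expansion (Lemma \ref{lem:uniform-rho}), valid over all $\|r\|_2\le\sqrt p$, which delivers the lower bound $\gamma\, r^T\sum_iA_i^TA_i\,r$ with $2\gamma=\EE v_1'(W)+\sum_\nu(\alpha_\nu-\alpha_{\nu-1})f_W(r_\nu)$ directly (so that for non-smooth losses the jump terms, weighted by the error density, also contribute curvature), and controls the empirical fluctuations around this expansion through the events $\mathcal{E}_n$ and $\mathcal{V}_n$ via the Hoeffding-type bounds of Lemmas \ref{lemma:bound1} and \ref{lemma:bound2}. To repair your route you would need to prove such a uniform statement anyway, at which point you have essentially reproduced Lemma \ref{lem:uniform-rho}.
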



\begin{proof}[Proof of Lemma \ref{mainthem}]
The proof follows the   strategy of  Lemma 3.1. of \cite{BM12},  with nontrivial adaptation to a  class of general loss functions. \\

Let $S = supp(x)\subseteq[p]$, where $supp(x) \equiv\{i|x_i\neq 0\}$ and $[p] = \{1,2,...,p\}$ and let $\bar S$ be its complement.
Let $r$ be the vector that satisfies Conditions $(C1)$ and $(C2)$, i.e., it is such that $\|r\|_2 \leq \sqrt{p}$ and  ${\mathcal{L}(x+r)-\mathcal{L}(x)}{} \geq 0$.
Observe that we can decompose the Lasso penalty   as follows
\begin{align}\label{eq:temp2}
\| x+r\|_1 - \|x\|_1 = \|x_S+r_S\|_1-\|x_S\|_1+\|r_{\bar{S}}\|_1,
\end{align}
as $x_S = x$ and $r = r_S+r_{\bar{S}}$.
%
%
%

Let us define a vector $v$ as 
\begin{equation}\label{eq:v}
v  \equiv \frac{1}{\lambda}[\sum_{i=1}^{p}\rho'(Y_i-A_i^Tx)A_i+sg(\mathcal{L},x)] 
\end{equation}
By observing that the  subgradients of $\mathcal{L}(x)$ satisfy $sg(\mathcal{L},x) = \lambda \partial ||x||_1 - \sum_{i=1}^n\rho'(Y_i-A_i^Tx)A_i$,  we obtain that $v_S = \partial||x_S||_1$.
Moreover, by adding and subtracting $ \langle v,r\rangle$ 
\begin{equation}\label{eq:vb}
\|r_{\bar{S}}\|_1 \geq   - p\langle \partial \|x_S\|_1,r_S\rangle   + \left( \|r_{\bar{S}}\|_1 - p\langle v_{\bar{S}}, r_{\bar{S}}\rangle ) + p \langle v,r\rangle\right)
\end{equation}
 where  $\langle u,v\rangle \equiv\frac{1}{m} \sum_{i=1}^mu_iv_i$, denotes the scalar product for $u,v \in\mathbb{R}^m$.

By observing that ${\mathcal{L}(x+r)-\mathcal{L}(x)}{} \geq 0$ and by plugging in all of the above inequalities, we conclude 
\begin{eqnarray}\label{ieq:1}
0&\overset{(iii)}{\geq}&\lambda \left(\frac{\|x_S+r_S\|_1-\|x_S\|_1}{p}-\langle \partial \|x_S\|_1,r_S\rangle \right) + \lambda \left(\frac{\|r_{\bar{S}}\|_1}{p} - \langle v_{\bar{S}}, r_{\bar{S}}\rangle \right) \\
&+& \lambda \langle v,r\rangle  - \Delta_n
\end{eqnarray}
where $(iii)$ follows from plugging equations \eqref{eq:temp2} and  (\ref{eq:vb})  in ${\mathcal{L}(x+r)-\mathcal{L}(x)}{} \geq 0$ and 
where 
\[
p \Delta_n = \sum_{i=1}^n \left[ \rho(Y_i-A_i^Tx-A_i^Tr) -  \rho(Y_i-A_i^Tx) \right].
\]

Next, we observe that
\begin{equation}\label{eq:va}
 \lambda \langle v,r\rangle =  \langle sg(\mathcal{L},x),r\rangle + p^{-1}  \sum_{i=1}^n\rho'(Y_i-A_i^Tx)(A_i^Tr)  .
\end{equation}

Let $\gamma_n$ be a sequence of positive numbers.
We define the following event
\begin{equation}
\mathcal{E}_n = \left\{ \left|\frac{\sum_{i=1}^n\rho'(Y_i-A_i^Tx)(A_i^Tr) }{p} \right| \leq \gamma_n : \forall \|r\|_2 \leq \sqrt{p}\right\}.
\end{equation}
%
%

Then, conditionally on $\mathcal{E}   $ we have 

\begin{eqnarray}\label{ieq:1b}
\gamma_n&\overset{(iii)}{\geq}& \lambda\left(\frac{\|x_S+r_S\|_1-\|x_S\|_1}{p}-\langle \partial \|x_S\|_1,r_S\rangle \right) + \lambda \left(\frac{\|r_{\bar{S}}\|_1}{p} 
- \langle v_{\bar{S}}, r_{\bar{S}}\rangle \right) \\
&+& \lambda \langle sg(\mathcal{L},x),r\rangle   - \Delta_n.
\end{eqnarray}

We discuss the last term first. 
We rewrite $ p\Delta_n$ as 
\[
 p \Delta_n = \mathbb{V}_{n}(r) + n \mathbb{E} {\rm v}_1 (r),
\]
where 
$
\mathbb{V}_n (r) = {\sum_{i=1}^n [{\rm v}_i(r) - \mathbb{E} {\rm v}_i (r)]},
$
with
$
{\rm v}_i(r) = \rho(Y_i-A_i^Tx-A_i^Tr) -  \rho(Y_i-A_i^Tx).
$

Let $\eta_n$ be a sequence of positive numbers.
Then, we consider the following event 
\[
\mathcal{V}_n = \left\{  | \mathbb{V}_n (r)| \leq \eta_n : \|r \|_2^2 \leq p \right\}.
\]

Conditioning on this event, the inequality (\ref{ieq:1}) becomes 

\begin{eqnarray}\label{ine:3}
\gamma_n + \eta_n &\overset{(iii)}{\geq}& \lambda\left(\frac{\|x_S+r_S\|_1-\|x_S\|_1}{p}-\langle \partial \|x_S\|_1,r_S\rangle \right)  \\
&&\nonumber+ \lambda \left(\frac{\|r_{\bar{S}}\|_1}{p} 
- \langle v_{\bar{S}}, r_{\bar{S}}\rangle \right) + \lambda \langle sg(\mathcal{L},x),r\rangle   +  \frac{n}{p} \mathbb{E} {\rm v}_1 (r).
\end{eqnarray}

%
%
%
Moreover, Cauchy Schwartz Inequality tells that:
\[
-\frac{||sg(\mathcal{L},x)||_2||r||_2}{p}\leq -\langle sg(\mathcal{L},x), r\rangle \leq\frac{||sg(\mathcal{L},x)||_2||r||_2}{p}.
\]
Using Conditions $(C1)$ and $(C3$, inequality (\ref{ine:3}) becomes
 \begin{eqnarray}\label{ine:4}
&&\lambda \left(\frac{\| x_S+r_S \| _1- \| x_S \| _1}{p}-\langle \mbox{sign}(x_S),r_S\rangle \right) + \lambda \left(\frac{ \| r_{\bar{S}} \|_1}{p} - \langle v_{\bar{S}}, r_{\bar{S}}\rangle \right)+\frac{n}{p} \mathbb{E} {\rm v}_1 (r) \\
& \leq &c_1\epsilon + \gamma_n + \eta_n.
\end{eqnarray}
The first two terms of the above right hand side are non-negative (proven by arguments identical to the  Lemma 3.1 in \cite{BM12}). For   the last term  we employ results of Lemma \ref{lem:uniform-rho} to obtain
\begin{eqnarray}\label{eq:70}
\frac{n}{p}    \mathbb{E}[  \rho(Y_i-A_i^Tx-A_i^Tr) -  \rho(Y_i-A_i^Tx)]
\geq  - \frac{n}{p}    \mathbb{E}[     \psi (W_i) A_i^T r]   
+ \frac{n}{p} \kappa [A_i^T r]^2
-
 o_P(1) .
\end{eqnarray}
In the display above, the first term disappears; for the second  one 
$$2 \kappa = \EE \left[ v_1' (W_i) +v_2'(W_i) \right] + \gamma,$$ for $\gamma$ defined in Lemma \ref{lem:uniform-rho}. According to Lemma \ref{lem:uniform-rho} and Condition (C6), we conclude that $\gamma$   is strictly positive. Hence, $\kappa >0$.
Therefore, there exists a constant $C>0$ such that 
\begin{eqnarray}\label{eq:eq3}
 \frac{1}{p} C  \| A r\|_2^2 \leq c_1\epsilon + \gamma_n + \eta_n  + o_P(1):=\xi_1(\epsilon).
\end{eqnarray}

To complete the proof we need to show that $\xi_1(\epsilon) \to 0$   and then employ arguments similar to  Lemma 3.1 in \cite{BM12}. This can be done by effectively bounding the size of the events $\mathcal{E}_n$ and $\mathcal{V}_n$.

The size of $\eta_n$ can be found by choosing appropriate sequence $u_n$ of Lemma \ref{lemma:bound1}.
For $u_n =  \sqrt{ (\log p)^2/ (p n )}$ we obtain that $\eta_n =n u_n = (\log p)\sqrt{n}/\sqrt{p}$ is sufficient to guarantee that $P(\mathcal{V}_n) \geq 1- \exp\{-2\log p/\kappa^2\}$.

Similarly, the size of $\gamma_n$ can be found by choosing appropriate sequence $u_n$ of Lemma \ref{lemma:bound2}.
For $u_n =  \sqrt{ (\log p)^2/ (p n  )}$ we obtain that $\eta_n =n u_n = (\log p) \sqrt{n}/\sqrt{p}$ is sufficient to guarantee that $P(\mathcal{E}_n) \geq 1- \exp\{-2\log p/\kappa^2\}$.

\end{proof}

 \begin{lemma}\label{lemma:bound1}
Let $|\rho' (u)| \leq \kappa$ for all $u \in \mathbb{R}$ and  some constant $\kappa <\infty$. Then, for all vectors $\rb$, such that $\| \rb\|_2 \leq \sqrt{p}$ and for any sequence of positive numbers $u_n \geq 0$ we have
\begin{equation}
\mathbb{P} \left( \left| \ {\sum_{i=1}^n   {\rm v}_i(\rb) - \mathbb{E} {\rm v}_i (\rb) }\right| \geq n p u_n \right) \leq \exp \left\{ - 2\frac{n^2 p u_n^2}{ \kappa^2 \log p }\right\},
\end{equation}
for $
{\rm v}_i(\rb) = \rho(Y_i-A_i^Tx_0-A_i^T\rb) -  \rho(Y_i-A_i^Tx_0).
$

\end{lemma}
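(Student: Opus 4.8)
The plan is to use a bounded-differences (McDiarmid-type) concentration inequality applied to the function $g(W_1,\dots,W_n) = \sum_{i=1}^n {\rm v}_i(\rb)$, viewed as a function of the independent noise coordinates $W_1,\dots,W_n$. Since the design $A$ is treated as fixed here (or we may condition on it), the randomness comes from $W$, and $Y_i - A_i^T x_0 = W_i$ by the linear model \eqref{w}. First I would recognize that $\EE[{\rm v}_i(\rb)]$ is exactly the centering, so $g - \EE g = \sum_i({\rm v}_i(\rb) - \EE{\rm v}_i(\rb))$ is what we must bound.

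The key step is to establish the bounded-differences property: if we change a single coordinate $W_j$ to $W_j'$, then $g$ changes by at most ${\rm v}_j$ evaluated at $W_j$ minus ${\rm v}_j$ at $W_j'$, i.e. by at most
\[
\sup_{w,w'} \left| \bigl(\rho(w - A_j^T\rb) - \rho(w)\bigr) - \bigl(\rho(w' - A_j^T\rb) - \rho(w')\bigr)\right|.
\]
By the mean value theorem applied to $t \mapsto \rho(t - A_j^T\rb) - \rho(t)$, whose derivative is $\rho'(t - A_j^T\rb) - \rho'(t)$, and using $|\rho'| \le \kappa$, this difference is bounded by $2\kappa |A_j^T\rb|$. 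Hence the $j$-th bounded-difference constant is $c_j = 2\kappa|A_j^T\rb|$, and $\sum_j c_j^2 = 4\kappa^2 \|A\rb\|_2^2$. Under Condition (A) the operator norm of $A$ is bounded (by \cite{BY93}, $\sigma_{\max}(A) \to (1+\sqrt\delta)/\sqrt\delta$), so with $\|\rb\|_2 \le \sqrt p$ we get $\|A\rb\|_2^2 \le \sigma_{\max}(A)^2 p = O(p)$, and thus $\sum_j c_j^2 \le C \kappa^2 p$ for a constant $C$ depending only on $\delta$. Then McDiarmid's inequality gives
\[
\PP\bigl(|g - \EE g| \ge t\bigr) \le 2\exp\left\{-\frac{2t^2}{\sum_j c_j^2}\right\} \le 2\exp\left\{-\frac{2t^2}{C\kappa^2 p}\right\}.
\]
Setting $t = npu_n$ yields an exponent of order $n^2 p u_n^2/(\kappa^2)$; to match the stated bound with its $\log p$ in the denominator, one absorbs the extra $\log p$ factor by noting the inequality only needs to hold for a $\log p$-net of the vectors $\rb$ (a union bound over a covering of the ball $\{\|\rb\|_2 \le \sqrt p\}$ of cardinality $e^{O(p\log p)}$ — actually the $\log p$ appears to arise from taking a discretization/net argument to make the statement uniform over all $\rb$, which is how it is invoked inside the proof of Lemma~\ref{mainthem}).

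The main obstacle is the uniformity over all vectors $\rb$ with $\|\rb\|_2 \le \sqrt p$: McDiarmid as stated is for a fixed $\rb$. To get a uniform statement I would construct an $\epsilon$-net $\mathcal{N}$ of the sphere of radius $\sqrt p$ with $\log|\mathcal{N}| \lesssim p \log(1/\epsilon)$, apply the pointwise bound on each net point with a union bound, and then control the oscillation of $\rb \mapsto \sum_i {\rm v}_i(\rb)$ between a point and its nearest net point using the Lipschitz bound $|{\rm v}_i(\rb) - {\rm v}_i(\rb')| \le \kappa|A_i^T(\rb - \rb')|$ again, giving a total oscillation $\le \kappa\|A\|_2\|\rb - \rb'\|_2 \le \kappa\sigma_{\max}(A)\sqrt p \,\epsilon$. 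Choosing $\epsilon$ polynomially small in $p$ makes this negligible relative to $npu_n$ for the relevant $u_n$, and the $\log p$ factor in the denominator of the exponent is precisely the price paid for the net cardinality. I expect the bookkeeping of constants and the choice of net granularity to be the only delicate part; everything else is a direct application of $|\rho'| \le \kappa$ together with the bounded operator norm from Condition (A).
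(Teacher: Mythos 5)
Your McDiarmid route is sound and in fact proves something slightly stronger than the stated bound, but it is genuinely different from what the paper does, so it is worth spelling out the contrast. The paper fixes $\rb$, writes $|{\rm v}_i(\rb)-\EE{\rm v}_i(\rb)|\leq |H_i(c)-\EE H_i(c)|\,|A_i^T\rb|$ with $H_i(c)=\sup_{|u|\leq c}\rho'(W_i-u)$ bounded by $\kappa$, controls each scalar $|A_i^T\rb|$ \emph{individually} by a Gaussian (Bernstein-type) tail bound at level $a_n=\sqrt{\log p/(np)}$, and then applies Hoeffding's inequality to the sum of $n$ terms each confined to a range of order $\kappa\sqrt{\log p/(np)}$; the $\log p$ in the denominator of the exponent is exactly the price of those per-row high-probability envelopes, \emph{not} of a net over $\rb$ as you conjecture (the lemma is a pointwise statement in $\rb$, and the paper proves it pointwise — no covering argument appears anywhere in its proof). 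Your approach instead conditions on $A$, treats $g(W_1,\dots,W_n)=\sum_i{\rm v}_i(\rb)$ as a function of the independent $W_i$ with bounded differences $c_j=2\kappa|A_j^T\rb|$, and aggregates $\sum_j c_j^2=4\kappa^2\|A\rb\|_2^2\leq 4\kappa^2\sigma_{\max}(A)^2 p$; this aggregation through the operator norm is strictly more efficient than bounding each row separately, which is why your exponent $\asymp n^2pu_n^2/\kappa^2$ has no $\log p$ loss and implies the stated inequality for all large $p$. What you give up is minor: (a) you must condition on $\{\sigma_{\max}(A)\leq C\}$ and account for its (exponentially small) complement if a fully non-asymptotic statement is wanted — though the paper's own proof conditions on analogous high-probability events about $A$ with the same looseness; and (b) McDiarmid centers at $\EE[g\mid A]$, so if the lemma's $\EE{\rm v}_i(\rb)$ is meant unconditionally you need an extra (easy) step comparing the two centerings. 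Two small corrections: the entire $\epsilon$-net discussion is unnecessary for this lemma as stated (uniformity over $\rb$ only becomes an issue in how the events $\mathcal{E}_n,\mathcal{V}_n$ are invoked inside Lemma \ref{mainthem}, a gap the paper shares); and your justification of $c_j$ via the mean value theorem applied to $t\mapsto\rho(t-A_j^T\rb)-\rho(t)$ literally yields a Lipschitz bound $2\kappa|w-w'|$ in the changed coordinate, which is useless — the correct one-line argument is that each bracket satisfies $|\rho(w-A_j^T\rb)-\rho(w)|\leq\kappa|A_j^T\rb|$ by applying the mean value theorem in the \emph{shift} variable, whence the difference of two brackets is at most $2\kappa|A_j^T\rb|$, which is the constant you correctly end up using.
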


\begin{proof}[Proof of Lemma \ref{lemma:bound1}]
Let $
\mathbb{V}_n (\rb) = {\sum_{i=1}^n [{\rm v}_i(\rb) - \mathbb{E} {\rm v}_i (\rb)]}.$
We begin by observing
\[
p^{-1} \mathbb{V}_n (\rb) \leq  {\sum_{i=1}^n p^{-1} \left| {\rm v}_i(\rb) - \mathbb{E} {\rm v}_i (\rb) \right|},
\]
for 
$
{\rm v}_i(\rb) = \rho(Y_i-A_i^Tx_0-A_i^T\rb) -  \rho(Y_i-A_i^Tx).
$
Then, by a Taylor expansion of the loss function $\rho$ around, we conclude 
\[
|\rho(Y_i-A_i^Tx_0-A_i^T\rb) -  \rho(Y_i-A_i^Tx_0)| \leq   |H_i(c) A_i^T \rb| 
\]
for 
$H_i(c) = \sup_{|u| \leq c} \rho'(W_i - u)  $. 
By Hoelder's  inequality  we conclude
\[
| {\rm v}_i(\rb) - \EE {\rm v}_i(\rb) |  \leq   \left| H_i(c) - \EE H_i(c)\right| \left| \langle A_i^T \rb \rangle  \right| .
\]
We proceed to bound each term in the RHS above, independently. 
For the first term, we observe that for a positive, bounded constant $\kappa$, the boundedness of the sub-gradient provides $\left| H_i(c) - \EE H_i(c)\right| \leq \kappa$.
For the second term, as $A_{ij}$ are Gaussian with variance $1/n$, by the weighted Bernstein inequality 
\begin{align}\label{eq:temp1}
\PP\left(  1/p \left| \langle A_i^T \rb \rangle   \right|  \geq a_n\right) 
&\leq 
\PP\left(  \sum_{j=1}^p \left| A_{ij} r_j \right|  \geq pa_n\right) 
\nonumber\\ 
&
\leq \exp \left\{ - \frac{p^2 a_n^2 }{4  \sum_{j=1}^p A_{ij}^2 r_{ij}^2 + 2C p \max_{j} |A_{ij}| /3} \right\}
\nonumber\\ 
&\leq
\exp \left \{- \frac{p^2 n a_n^2}{4\|\rb\|_2^2 }\right\}.
\end{align}
For all $r$ such that $\| \rb\|_2 \leq \sqrt{p} c_1$, the right hand side is smaller than $ \exp\{-  {p} n a_n^2/4 c_1\}$. Hence,  a choice of $a_n = \sqrt{\log p/ (n  {p})}$ leads that
\[
p^{-1}| {\rm v}_i(\rb) - \EE {\rm v}_i(\rb) |  =O_P\left(\sqrt{ \frac{ \log p}{np}}\right).
\]
This, in turn, guarantees that $\frac{1}{p} \mathbb{V}_n (\rb) $
 is a sum of $n$ terms, each of which is $o_P(1)$. 
 By Hoefding's inequality for bounded random variables, for any positive sequence of $u_n$  \[
\mathbb{P} \left(\frac{1}{p} |\mathbb{V}_n (\rb) | \geq   n u_n  \right)  
\leq 
\exp \left\{ - 2\frac{  n^2 u_n^2}{ \kappa^2  \sum_{i=1}^n  \frac{ \log p}{np} }\right\}
\leq
\exp \left\{ - 2\frac{  p n^2 u_n^2}{ \kappa^2 \log p }\right\}.
\]
\end{proof}

\begin{lemma}\label{lemma:bound2}
Let $|\rho' (u)| \leq \kappa$ for all $u \in \mathbb{R}$ and  some constant $\kappa <\infty$.
Then, for a positive sequence of $u_n \geq 0$ we have
\[
\PP\Bigl( \left| \langle \bigtriangledown \mathcal{L}(x_0) ,\rb\rangle  \right|\geq  u_n  \Bigl) \leq \exp\left\{-  \frac{ n p u_n^2}{ 2 \kappa^2 {(\log p)  }}\right\}.
\]

\end{lemma}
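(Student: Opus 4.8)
The plan is to argue exactly as in Lemma~\ref{lemma:bound1}, replacing the mean-value reduction there by the linearity available here. Using \eqref{w}, the random part of $\bigtriangledown\mathcal{L}(x_0)$ equals $-\sum_{i=1}^n\rho'(W_i)A_i$, so with the normalized inner product $\langle u,v\rangle=p^{-1}\sum_j u_jv_j$,
\[
\langle \bigtriangledown\mathcal{L}(x_0),\rb\rangle \;=\; -\frac1p\sum_{i=1}^n \rho'(W_i)\,(A_i^T\rb).
\]
First I would record that each summand is mean zero: $\{W_i\}$ is independent of $A$ and $\EE[A_i^T\rb]=0$, while Condition~$(\mathbf{R})$(iii) --- $t\mapsto h(t)=\int\rho(z-t)\,dF(z)$ has its minimum at $t=0$ --- gives $0=h'(0)=-\EE[\rho'(W_i)]$ for the sub-differential selection used throughout. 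Second, Condition~$(\mathbf{R})$(ii) gives $|\rho'(W_i)|\le k_0\le\kappa$, hence $|p^{-1}\rho'(W_i)(A_i^T\rb)|\le p^{-1}\kappa\,|A_i^T\rb|$.

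The key step is then to condition on the design so that the sum becomes a sum of \emph{bounded} independent random variables. Under Condition~$(\mathbf{A})$, $A_i^T\rb\sim\mathcal N(0,\|\rb\|_2^2/n)$ with $\|\rb\|_2^2\le p$, so a Gaussian tail bound and a union bound over $i\le n$ --- this is the estimate \eqref{eq:temp1} already used for Lemma~\ref{lemma:bound1} --- produce an event $\mathcal A_n$ with $\PP(\mathcal A_n)\ge 1-p^{-c}$ on which $\max_{i\le n}p^{-1}|A_i^T\rb|\le a_n:=C\sqrt{(\log p)/(pn)}$. On $\mathcal A_n$, conditionally on $A$, $\langle\bigtriangledown\mathcal{L}(x_0),\rb\rangle$ is a sum of $n$ independent mean-zero terms each bounded by $\kappa a_n$, so Hoeffding's inequality for bounded variables yields a bound of the form $\exp\{-c' p u_n^2/(\kappa^2\log p)\}$; integrating over $\mathcal A_n$ and absorbing $\PP(\mathcal A_n^c)\le p^{-c}$ into the constant (which only affects the numerical factor in the exponent, not the order $\exp\{-c\log p\}$ that Lemma~\ref{mainthem} actually needs once $u_n\asymp\sqrt{(\log p)^2/(pn)}$) gives the claimed estimate. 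If one wants the bound uniformly over all $\rb$ with $\|\rb\|_2\le\sqrt p$, I would instead use that $\rb\mapsto\langle\bigtriangledown\mathcal{L}(x_0),\rb\rangle$ is linear, so its supremum over that ball is $p^{-1/2}\|\bigtriangledown\mathcal{L}(x_0)\|_2$, which is controlled by the same conditional computation together with $\sigma_{\max}(A)^2\le c_5$ (Condition~$(\mathbf{A})$, \cite{BY93}), or by a standard $\varepsilon$-net over the sphere.

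The main obstacle is the interplay of the two independent randomness sources and the precise accounting of the exponent. One must condition on $A$ to make the summands bounded and independent, so the Hoeffding range $\kappa a_n$ is itself random and only valid on $\mathcal A_n$; and to land on the exact power of $n$ in $\exp\{-npu_n^2/(2\kappa^2\log p)\}$ one must track the conditional second moment $p^{-2}n^{-1}\|\rb\|_2^2\sum_i\rho'(W_i)^2$ rather than the crude range alone, then verify that the contribution of the bad event $\mathcal A_n^c$ and the constants from the union bound do not erode the stated rate. The only structural hypothesis used beyond the uniform bound on $\rho'$ is Condition~$(\mathbf{R})$(iii), which is exactly what allows the conclusion to be phrased as a tail bound on $|\langle\bigtriangledown\mathcal{L}(x_0),\rb\rangle|$ rather than on its deviation from a nonzero conditional mean.
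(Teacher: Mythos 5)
Your route is genuinely different from the paper's. The paper discards the factors $\rho'(W_i)$ outright via $|\rho'|\leq\kappa$, reduces to the non-negative sum $\kappa\sum_{i}q_i(\rb)$ with $q_i(\rb)=p^{-1}\sum_j|A_{ij}r_j|$, and then appeals to the estimate \eqref{eq:temp1}; you instead keep the $\rho'(W_i)$, use $\EE[\rho'(W)]=0$ (via Condition ({\bf R})(iii)) together with the independence of $W$ and $A$, and run a conditional Hoeffding argument on a good event for the design. That is the cleaner idea, and your identification of the mean-zero structure is something the paper's proof never exploits.

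However, there is a genuine gap between what your argument proves and what the lemma asserts, and neither of your proposed repairs closes it. Your honest computation lands on $\exp\{-c\,p\,u_n^2/(\kappa^2\log p)\}$, a factor of $n$ short of the stated exponent, and this factor cannot be recovered by ``tracking the conditional second moment'': conditionally on $A$, the variance of $\langle\nabla\mathcal{L}(x_0),\rb\rangle$ is $p^{-2}\EE[\rho'(W)^2]\,\|A\rb\|_2^2$, which for $\|\rb\|_2^2\asymp p$ and $\sigma_{\max}(A)=O(1)$ is of order $\EE[\rho'(W)^2]/p$, not $1/(np)$. No concentration inequality can beat the variance; indeed, integrating the claimed tail bound over $u_n\in(0,\infty)$ would force $\mathrm{Var}\bigl(\langle\nabla\mathcal{L}(x_0),\rb\rangle\bigr)\leq 2\kappa^2\log p/(np)$, which fails whenever $\EE[\rho'(W)^2]\gtrsim \log p/n$. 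Nor is the loss of the factor $n$ harmless downstream: at the scale $u_n\asymp\sqrt{(\log p)^2/(pn)}$ used in the proof of Lemma~\ref{mainthem}, your exponent is $p u_n^2/\log p\asymp(\log p)/n\to 0$, so the resulting probability bound is vacuous, whereas the stated exponent is $\asymp\log p$. To be fair, the paper's own proof does not survive the same scrutiny (it asserts $\EE q_i(\rb)=0$ for a sum of absolute values and applies the single-index bound \eqref{eq:temp1} to the sum over $i$ without justification), so the obstruction you ran into lives partly in the statement itself; but your claim that the missing $n$ is a matter of bookkeeping is the concrete point at which your argument breaks down.
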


\begin{proof}[Proof of Lemma \ref{lemma:bound2}]
Let
\[
\bigtriangledown \mathcal{L}(x_0) = \sum_{i=1}^n\rho'(Y_i-A_i^Tx_0)A_i^T
\]
and observe that $\EE \bigtriangledown \mathcal{L}(x_0) =0$ by the vanishing property of the true score function $\EE \rho'(Y_i-A_i^Tx_0) =0$. Hence,
\[
\langle \bigtriangledown \mathcal{L}(x_0) ,r\rangle = p^{-1}\sum_{i=1}^n\rho'(Y_i-A_i^Tx_0)(A_i^T\rb) 
\]
and is such that $\EE \langle \bigtriangledown \mathcal{L}(x_0) ,\rb\rangle =0$.
By a triangular inequality and the bounded sub-gradient assumption 
\[
\left| \langle \bigtriangledown \mathcal{L}(x_0) ,\rb\rangle  \right| \leq \kappa  \sum_{i=1}^n q_i(\rb)
\]
with
\[
q_i(\rb) =  p^{-1} \sum_{j=1}^p  |A_{ij} r_j|.
\]
Then, $\EE q_i(\rb)=0$ as $A$ is a mean zero design matrix. 
From Lemma \ref{lemma:bound1}, equation \eqref{eq:temp1}, we conclude that
%
\[
\PP\left( \sum_{i=1}^n q_i(\rb) \geq  u_n  \right) \leq \exp\left\{-  \frac{ u_n^2}{ 2 {(\log p)  }/{(n p)}}\right\}.
\]
\end{proof}

\begin{lemma}\label{lem:uniform-rho}
Consider the model \eqref{w} with Conditions {\rm({\bf R})}, {\rm({\bf D})} and {\rm({\bf A})} satisfied. Let $\rb$ be a vector in $\RR^p$ such that $\| \rb\|_2^2 \leq Cp$ for a constant $C$: $0 < C <\infty$. Then,
\begin{equation}\label{eq:rho1}
\sup_{\| \rb\|_2 \leq \sqrt{p}}p^{-1} \left| \sum_{i=1}^n \left[\rho(Y_i -A_i^T x - A_i^T \rb) -  \rho(Y_i -A_i^T x )\right] + \rb^T \sum_{i=1}^n A_i \rho'(Y_i - A_i^T x) -\gamma \rb^T \sum_{i=1}^n A_i^T A_i \rb \right| =o_P(1),
\end{equation}
as $n$ and $p \to \infty$, with $2\gamma = \EE v_1'(W)+ \sum_{\nu=1}^{k} (\alpha_{\nu} - \alpha_{\nu-1})f_W(r_\nu)$. \end{lemma}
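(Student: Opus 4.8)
The plan is, first, to use the Gaussian design to replace the perturbations $A_i^{T}\rb$ by i.i.d.\ Gaussians, and then to expand $\rho$ along the decomposition of $\rho'$ in Condition~$\mathbf{(R)}$(i). Fix $\rb$ with $\|\rb\|_2\le\sqrt p$. Since the entries of $A$ are i.i.d.\ $\mathcal N(0,1/n)$ and independent of $W$, conditionally on $\rb$ the variables $u_i:=A_i^{T}\rb$ are i.i.d.\ $\mathcal N\!\left(0,\|\rb\|_2^2/n\right)$, so their joint law depends on $\rb$ only through $\|\rb\|_2$, $\max_i|u_i|=O_P(\sqrt{\log n})$, and $\sum_i u_i^2=\rb^{T}\sum_iA_iA_i^{T}\rb$ concentrates about $\|\rb\|_2^2$. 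Write $\rho=\rho_1+\rho_2+\rho_3$ for antiderivatives of $v_1,v_2,v_3$ from Condition~$\mathbf{(R)}$(i); since only differences $\rho(\cdot)-\rho(\cdot)$ enter, it is enough to prove the expansion for each piece, the resulting coefficients adding up to the $\gamma$ of the statement. Throughout I separate each summand into a part with $W_i$ integrated out (``deterministic'', conditional on $A$) and a mean-zero fluctuation.

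For the deterministic part and the smooth piece $v_1$, a second order Taylor expansion with integral remainder gives $\EE_W\!\left[\rho_1(W-u)-\rho_1(W)+v_1(W)u\right]=\int_0^{u}(u-t)\,\EE\!\left[v_1'(W-t)\right]dt=\tfrac12\EE\!\left[v_1'(W)\right]u^2+R_1(u)$ with $|R_1(u)|\le C|u|^{3}$, the constant finite by Condition~$\mathbf{(R)}$(iv) and the differentiability of $f_W$ from Condition~$\mathbf{(D)}$. The piecewise linear piece $v_2$ — bounded, with piecewise constant derivative $\kappa_\nu$ on $(q_\nu,q_{\nu+1}]$ and constant outside a compact interval — is treated in exactly the same way. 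The step piece $v_3$ is the delicate one: its distributional derivative is $\sum_{\nu}(\alpha_\nu-\alpha_{\nu-1})\delta_{r_\nu}$, so $\frac{d^{2}}{du^{2}}\EE_W\!\left[\rho_3(W-u)-\rho_3(W)+v_3(W)u\right]=\sum_\nu(\alpha_\nu-\alpha_{\nu-1})f_W(r_\nu+u)$; evaluating at $u=0$ yields precisely the jump contribution $\sum_\nu(\alpha_\nu-\alpha_{\nu-1})f_W(r_\nu)$ to $2\gamma$, while the remainder is again cubic once $\PP(W-u\in(r_\nu,r_{\nu+1}))$ is linearised in $u$ using the boundedness of $f_W$ and $f_W'$. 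This is the same mechanism that produced the density terms in Lemma~\ref{lem:delta}.

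For the fluctuation part $\sum_i\big\{[\rho(W_i-u_i)-\rho(W_i)+\rho'(W_i)u_i]-\EE_W[\,\cdot\,]\big\}$, I fix $\rb$ first: by Condition~$\mathbf{(R)}$(ii) each summand is bounded in absolute value by $2k_0|u_i|=O_P(\sqrt{\log n})$, so a bounded differences / Hoeffding argument over the i.i.d.\ $W_i$ — of the type already used in Lemmas~\ref{lemma:bound1}--\ref{lemma:bound2}, supplemented by the moment bound of Condition~$\mathbf{(R)}$(iv) for the smooth remainder — gives $o_P(p)$ with exponential tails, while for the step piece I instead invoke the argument in the proof of Lemma~\ref{lem:delta}, where $n^{-1/2}S_n^{o}(\cdot)$ was shown to be tight and asymptotically Gaussian by recognising it as a difference of two empirical c.d.f.'s converging to a Wiener process. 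Passing from a fixed $\rb$ to a statement uniform over $\{\|\rb\|_2\le\sqrt p\}$ uses that $\rb\mapsto\sum_i[\rho(W_i-A_i^{T}\rb)-\rho(W_i)+\rho'(W_i)A_i^{T}\rb]-\gamma\|A\rb\|_2^{2}$ is Lipschitz with constant $O(\sqrt n)$ — its gradient is $\sum_i[\rho'(W_i)-\rho'(W_i-u_i)-2\gamma u_i]A_i$, of norm at most $\sigma_{\max}(A)\,(2k_0\sqrt n+2\gamma\|A\rb\|_2)$ with $\sigma_{\max}(A)=O(1)$ by \cite{BY93} — combined with a covering of the ball and, where the metric entropy is too coarse, a chaining/peeling step that exploits that the per-coordinate increments are of the small size $k_0|u_i|$ with $\sum_i u_i^{2}\le\sigma_{\max}(A)^{2}\|\rb\|_2^{2}$ controlled. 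Convexity of $\rho$ is used throughout to sandwich the Bregman-type summands between monotone quantities, which localises the estimates.

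The step I expect to be hardest is keeping the cubic remainders $\sum_iR_1(u_i)$ and their step-function analogue $o_P(p)$ uniformly on the whole ball: since $u_i$ is only $O_P(1)$ rather than $o_P(1)$, the crude bound is $O_P(p)$, so one must first localise to the sub-ball $\|\rb\|_2=o_P(\sqrt p)$ produced by a preliminary consistency estimate — after which $\sum_i\EE|u_i|^{3}=O(\|\rb\|_2^{3}/\sqrt n)=o_P(p)$ — or, for the one-sided bound actually used in~\eqref{eq:70} of Lemma~\ref{mainthem}, replace the exact expansion by the convexity inequality $\rho(W_i-u_i)-\rho(W_i)+\rho'(W_i)u_i\ge 0$ and retain only the population curvature. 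Getting this bookkeeping to close at the $o_P(p)$ level, rather than $O_P(p)$, is precisely where the argument is more refined than in \cite{BM12}, whose least squares loss made the identity exact.
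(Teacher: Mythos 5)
Your proposal follows essentially the same route as the paper's proof: both decompose $\rho'$ into its absolutely continuous, piecewise-linear and step components per Condition ({\bf R})(i), extract the curvature constant $2\gamma = \EE v_1'(W)+ \sum_{\nu} (\alpha_{\nu} - \alpha_{\nu-1})f_W(r_\nu)$ by Taylor-expanding the conditional expectation (the density terms arising from the jump points exactly as you describe), control the mean-zero fluctuations by Hoeffding-type concentration, and obtain uniformity over the ball via weak convergence of the empirical process for the step part and a variance/chaining bound for the smooth part. The only structural difference is that you expand the loss to second order directly, whereas the paper proves the first-order uniform asymptotic linearity statement \eqref{eq:rho2} for the score process $\sum_i A_i[\psi(W_i - A_i^T\rb)-\psi(W_i)]$ and then recovers \eqref{eq:rho1} by integrating along the segment from $0$ to $\rb$; these are equivalent up to one integration. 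Your closing concern about the higher-order remainder is well taken and is in fact the soft spot of the paper's own argument as well: since $A_i^T\rb$ is only $O_P(1)$ on the boundary of the ball (not $o_P(1)$), the paper controls the quadratic remainder of the score expansion only through the pointwise tail bound \eqref{eq:temp1} and the final ``integration over $r$'', without the localization to $\|\rb\|_2 = o(\sqrt{p})$ or the one-sided convexity reduction (which is all that inequality \eqref{eq:70} in Lemma \ref{mainthem} actually requires) that you propose; either of your two fixes would be needed to make the $o_P(p)$ bookkeeping close rigorously, and in this respect your write-up is more careful than the paper's.
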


\begin{proof}[Proof of Lemma \ref{lem:uniform-rho}]
It suffices to prove 
\begin{equation}\label{eq:rho2}
\sup_{\| \rb\|_2 \leq \sqrt{p}} \left\| \sum_{i=1}^n A_i\left[\psi(Y_i -A_i^T x - A_i^T \rb) -  \psi(Y_i -A_i^T x )\right] + \gamma   \sum_{i=1}^n A_i^T A_i \rb \right\|_{\infty}  =O_P(\sqrt{p \log p/n})
\end{equation}
where $2\gamma = \EE v_1'(W)+ \sum_{\nu=1}^{k} (\alpha_{\nu} - \alpha_{\nu-1})f_W(r_\nu)$, together with $| \sum_{i=1}^n A_{ij} \Psi(W_i) |=O_P(1)$, for all $j=1,\dots,p$.
The above, in turn, implies through integration over $r$ the statement \eqref{eq:rho1}.  

Let $j=1,\dots,p$. We first argue that $| \sum_{i=1}^n A_{ij} \Psi(W_i) |=O_P(1)$: by Condition ${\mathbf (D)}$, $A_{ij} =O_P(\sqrt{1/n})$ and by Condition ${\mathbf (R)}$, $| n^{-1/2} \sum_{i=1}^n  \Psi(W_i)|=O_P(1)$ (bounded random variables no matter of the size of $W_i$ -- consequence of Theorem 12.1 \cite{B06}).

Next, we prove \eqref{eq:rho2}. For that end, define a stochastic process 
\[
S_n(r) = \sum_{i=1}^n A_i\left[\psi(Y_i -A_i^T x - A_i^T \rb) -  \psi(Y_i -A_i^T x )\right]
\]
for $r \in [-C\sqrt{p},C\sqrt{p}]^{p}$, for some $C$: $0<C<\infty$.
We let $\psi=v_1 +v_2$ and denote the absolute continuous and step-function components by $v_1$ and $v_2$, respectively.

Case I: $\psi=v_2$ (i.e., $v_1 =0$). 

Without loss of generality, we assume that there is a single jump-point. We set $v_2(y)$ to be $0$ or $1$ according to $y$ being $\leq 0$ or $>0$. By the vector structure in \eqref{eq:rho2}, it suffices to show that for each coordinate of $S_n(r)$ the uniform asymptotic linearity result holds for  $r \in [-C\sqrt{p},C\sqrt{p}]^{p}$. To simplify the notation, we consider only the first coordinate and drop the subscript $1$ in $S_{n1}(r)$:
\[
S_n^0(r) = S_n(r) - \EE S_n(r),
\]
where $\EE S_n(r) = \sum_{i=1}^n A_{i1} \left[ F_W(0) -F_W ( A_i^T \rb)  \right]$. By Taylor expression, we have 
$F_W(0) -F_W ( A_i^T \rb) =f_w(0)A_i^T \rb + f'_W(\xi) [A_i^T \rb]^2$, for $\xi \in (0, A_i^T \rb)$. Moreover, by \eqref{eq:temp1}, $|A_i^T \rb | = O_P(\sqrt{p \log p/n})$ and by  Condition ${\mathbf (D)}$, $A_{ij} =O_P(\sqrt{1/n})$ . Therefore, 
\[
\left| \sum_{i=1}^n A_{i1} f'_W(\xi) [A_i^T \rb]^2 \right| =O_P( (p \log p) /n  ).
\]
Hence, by Hoefding's inequality, Theorem 12.1 of \cite{B06}, we have 
\[
|S_n^0(r)| =O_P(\sqrt{p \log p/n}),
\]
for $\rb \in [-C\sqrt{p},C\sqrt{p}]^{p}$. To prove uniform asymptotic linearity we resort to the known weak convergence properties of the empirical cumulative distribution functions to the Brownian motion \citep{JS89} or by uniform  decompositions of the work of \cite{BC11}.

Case II: $\psi=v_1$ (i.e., $v_2 =0$). 
 Note that for every $\rb \in [-C\sqrt{p},C\sqrt{p}]^{p}$, by a second-order Taylor's expansion,
 \[
 \psi(Y_i -A_i^T x - A_i^T \rb) -  \psi(Y_i -A_i^T x )  = v_1'(Y_i -A_i^T x ) [ - A_i^T \rb]   +R
 \]
 where the remainder term 
 $$
 R = 1/2 \int_{Y_i -A_i^T x}^{Y_i -A_i^T x - A_i^T \rb} (Y_i -A_i^T x - A_i^T \rb - t)^2 v_1^{''}(t)dt \leq \frac{1}{2!}[A_i^T \rb]^2
 $$ 
 as $v_1^{''}(t) \leq C$ for all $t \in [Y_i -A_i^T x,Y_i -A_i^T x - A_i^T \rb ]$ and by \eqref{eq:temp1},
 is of the order $O_P(p  (\log p /n) )$.

 Now, it can be easily shown that for any $r_1$ and $r_2$ of distinct points
 \begin{align}
 \mbox{Var} \left(S_n(r_1) - S_n(r_2) \right) 
 &\leq \sum_{i=1}^n A_{i1}^2 \EE \left[ \psi(Y_i -A_i^T x - A_i^T \rb) -  \psi(Y_i -A_i^T x ) \right]^2
 \\
 & \leq K \| r_1 -r_2\|^2_2
 \end{align}
 uniformly in $r_1,r_2$, for a constant $K$: $0 < K <\infty$.
 Also, the boundedness of $v_1'$ we have 
 \begin{align}
\EE \left[S_n(r_1) - S_n(r_2) - \sum_{i=1}^n A_{i1} A_i^T (r_1 -r_2)\right]
 &\leq \sum_{i=1}^n A_{i1}^2 \EE \left[ \psi(Y_i -A_i^T x - A_i^T \rb) -  \psi(Y_i -A_i^T x ) \right]^2
 \\
 & \leq K_1 \| r_1 -r_2\|_2
 \end{align}
 uniformly in $r_1,r_2$, for a constant $K_1$: $0 < K_1 <\infty$.
 With all of the above we conclude 
  \begin{align}
 S_n(r_1) - S_n(r_2) - \sum_{i=1}^n A_{i1} A_i^T (r_1 -r_2) 
  = O_P(\sqrt{p}).
 \end{align}
To prove the compactness, we shell consider increments of $S_n(r)$ over small blocks.
For $r_2 >r_1$, the increments of $S_n(\cdot)$ over the block $B=B(r_1,r_2)$ is 
\[
S_n(B)= S_n(r_2)-S_n(r_1) = \sum_{i=1}^n A_i \psi_i(W_i;B)
\]
for $i=1,\dots, n$ and 
\[
\psi_i(W_i;B) =\psi_i(W_i - A_i^T r_2) - \psi_i(W_i - A_i^T r_1).
\]
From \eqref{eq:temp1}, $A_i^T r_2$  and $A_i^T r_1$ are of the order of $O_P(\sqrt{p \log p /n})$ and $\psi$ is a   bounded function, we have $\psi_i(W_i;B) =O_P(\sqrt{p \log p /n})$. Moreover, $\psi_i(W_i;B) =0$ if any of the arguments lay in the same interval. Hence, 
\[
\sup \left\{ |S_n(B (-K,r_2))|: -K \leq r_2 \leq K\right\} \leq \sum_{i=1}^n |A_{i1}| K \left(\| A_i r_1\|_1 + \| A_i r_2\|_1 \right) I_i
\]
 where $I_i$ are independent, non-negative indicator variables with 
 \[
 \EE I_i \leq K_1 \left(\| A_i r_1\|_1 + \| A_i r_2\|_1 \right)
 \]
 for a constant $K_1$: $0 < K_1 < \infty$.
 Hence,
 \[
\mbox{Var} \left\{ \sup \left\{ |S_n(B (-K,r_2))|: -K \leq r_2 \leq K\right\} \right\}  = O(\sqrt{p \log p/n}).
\]
\end{proof} 
 
The following lemma is a simple modification  of Lemma 5.3 of \cite{BM12}; hence, we omit the proof.
\begin{lemma}\label{lem:temp1}
Let $S\subseteq [p]$ be measurable on the $\sigma-$algebra $\sigma_t$ generated by $\{z^0,...,z^{t-1}\}$ and $\{x^0 + A^TG(z^0,b_0),...,x^t + A^TG(z^t;b_t)\}$; assume $|S|\leq p(\delta-c)$ for some $c> 0$. Then,  there exists $a_1 = a_1(c)>0$ and $a_2 = a_2(c,t)>0$, such that $\underset{S'}{min}\{\sigma_{min}(A_{S\cup S'}):S'\subseteq[p],|S'|\leq a_1p\}\geq a_2$ with probability converging to 1 as $p\to \infty.$
\end{lemma}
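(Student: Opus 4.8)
The plan is to follow the strategy of Lemma 5.3 of \cite{BM12}, adapting it to the three-step RAMP recursion. The only real obstacle to a direct application of random matrix theory is that $S$ is not independent of $A$: it is measurable with respect to the $\sigma$-algebra $\sigma_t$ generated by the RAMP iterates, which are themselves functions of $A$. The remedy is the conditioning technique underlying the state-evolution analysis. By Lemma \ref{lem:lem22}, RAMP is an instance of the general recursion of \cite{BM11}, so conditionally on $\sigma_t$ the matrix $A$ is distributed as a $\sigma_t$-measurable matrix plus an independent Gaussian matrix whose action is restricted to linear subspaces of its domain and range, each of codimension at most $t$ (the number of past iterates recorded in $\sigma_t$). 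Since $t$ is fixed while $p\to\infty$, these are low-rank corrections.

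First I would fix the realized set $S$ — legitimate once we argue conditionally on $\sigma_t$ — and fix an arbitrary $S'\subseteq[p]$ with $|S'|\le a_1 p$; write $T=S\cup S'$, so that $|T|\le p(\delta-c)+a_1 p = p(\delta-c+a_1)$. Choosing $a_1\le c/2$ forces $|T|/n\le 1-\tfrac{c}{2\delta}<1$, i.e. the column submatrix $A_T$ is a tall matrix with aspect ratio bounded away from one, and this persists after deleting the $O(t)$ dimensions lost to conditioning since $p c/2 > t$ for $p$ large. For such a matrix, the Davidson--Szarek / Bai--Yin bounds (already invoked in the proof of Theorem \ref{the:the1} via \cite{BY93}) together with Gaussian concentration of the smallest singular value give $\PP\big(\sigma_{\min}(A_T)< a_2 \,\big|\, \sigma_t\big)\le e^{-c_2 p}$ for suitable $a_2=a_2(c,t)>0$ and $c_2=c_2(c,t)>0$.

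Next I would take a union bound over all admissible $S'$. Their number is at most $\sum_{k\le a_1 p}\binom{p}{k}\le e^{p H(a_1)}$, with $H$ the binary entropy function, which tends to $0$ as $a_1\to 0$. Hence, shrinking $a_1$ if necessary so that $H(a_1)<c_2$,
\[
\PP\Big(\min_{S':\,|S'|\le a_1 p}\sigma_{\min}(A_{S\cup S'})< a_2 \,\Big|\,\sigma_t\Big)\le e^{p H(a_1)}\,e^{-c_2 p}=e^{-(c_2-H(a_1))p}\to 0 .
\]
Taking expectations over $\sigma_t$ removes the conditioning and yields the claim.

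The main obstacle is the first paragraph: making the conditional representation of $A$ precise and verifying that the low-rank perturbations and the restriction to subspaces of codimension $O(t)$ degrade the minimum singular value only by a negligible amount — this is exactly where the constant $a_2$ picks up its dependence on $t$. Everything else, namely the counting bound on subsets and the concentration inequality for tall Gaussian matrices, is standard and parallels \cite{BM12} essentially verbatim, which is why the authors can legitimately omit the details.
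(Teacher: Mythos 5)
Your proposal is correct and follows essentially the same route as the paper, which simply defers to Lemma 5.3 of \cite{BM12}: the conditioning technique to handle the $\sigma_t$-measurability of $S$, the smallest-singular-value concentration for tall Gaussian submatrices of aspect ratio bounded away from one (guaranteed by taking $a_1<c$), and the entropy-versus-exponential-tail union bound over the choices of $S'$. The one point you rightly flag as the real content --- that conditioning on the first $t$ iterates only perturbs $A$ by a rank-$O(t)$, $\sigma_t$-measurable correction plus an independent Gaussian part, so the singular value bound survives with $a_2$ depending on $t$ --- is exactly the ``simple modification'' the paper alludes to, and your sketch of it is sound.
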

We apply this lemma to a specific choice of the set $S$. Defining
\[
v^t\equiv \frac{1}{\theta_{t-1}}(x^{t-1}+A^TG(z^{t-1};b_{t-1})-x^t).
\]

\begin{lemma}\label{lem:temp2}
Fix $\gamma\in(0,1)$ and let   $S_t(\gamma)\equiv \{i\in[p]:|v^t_i|\geq 1-\gamma\}$ for $\gamma\in(0,1)$.  For any $\xi>0$ there exists $t_*(\xi,\gamma)$ such that for all $t_2>t_1>t_*$,
\[
\underset{p\to\infty}{lim}\PP\{|S_{t_2} \backslash S_{t_1}|\geq p\xi\} = 0.
\]
\end{lemma}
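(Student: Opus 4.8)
\textbf{Proof proposal for Lemma \ref{lem:temp2}.}

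The plan is to track how the set $S_t(\gamma)$ changes across iterations by relating it to the $l_1$-support of the RAMP iterates and invoking the convergence of the algorithm. First, I would recall from the proof of Lemma~\ref{lemma:lemma1} and the estimation step \eqref{alg:alg1} that the vector $v^t = \theta_{t-1}^{-1}(x^{t-1}+A^TG(z^{t-1};b_{t-1})-x^t)$ is precisely an element of the subgradient $\partial\|x^t\|_1$: on coordinates where $x^t_i\neq 0$ we have $v^t_i=\operatorname{sign}(x^t_i)$, so $|v^t_i|=1\geq 1-\gamma$, while on coordinates with $x^t_i=0$ we have $|v^t_i|\leq 1$. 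Thus $\operatorname{supp}(x^t)\subseteq S_t(\gamma)$, and more precisely $i\in S_t(\gamma)$ forces $x^{t-1}_i + (A^TG(z^{t-1};b_{t-1}))_i$ to be within $\gamma\theta_{t-1}$ of the threshold boundary $\pm\theta_{t-1}$. The quantity $|S_{t_2}\setminus S_{t_1}|$ therefore counts coordinates that are (near-)active at iteration $t_2$ but were (strictly) inactive at $t_1$; I want to show this count is $o(p)$ once $t_1,t_2$ are both large.

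The key tool is Theorem~\ref{the:the2} applied to a suitable pseudo-Lipschitz surrogate. By Theorem~\ref{the:the2}, for any pseudo-Lipschitz $\psi$ the empirical average $\frac1p\sum_i \psi(x^{t+1}_i,x_{0,i})$ converges almost surely to $\EE[\psi(\eta(x_0+\bar\tau_tZ;\theta),x_0)]$, and by Lemma~\ref{lem:uniquetau} together with the convergence established in Theorem~\ref{the:the1}, the state-evolution parameters $\bar\tau_t\to\tau^*$ and $\bar\sigma_t\to\sigma^*$, so for $t$ large the limiting distribution of the pair $(x^t+A^TG(z^t;b_t),x_0)$ stabilizes to $(x_0+\tau^*Z,x_0)$. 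Since indicator functions are not pseudo-Lipschitz, I would approximate $\mathbbm 1\{\,\cdot\text{ near boundary}\,\}$ from above and below by Lipschitz ``bump'' functions $\psi_\epsilon$ supported on a $\gamma\theta$-neighbourhood of $\pm\theta$, apply Theorem~\ref{the:the2}, and then let $\epsilon\to 0$. This shows $\frac1p|S_t(\gamma)|$ converges almost surely to a limit $m(\gamma)=\PP(|x_0+\tau^*Z|\ge(1-\gamma)\alpha\tau^*)$ that does not depend on $t$ for $t$ past some $t_*(\xi,\gamma)$. Moreover, the nested-increment structure of the soft-thresholding iteration — coordinates leave and re-enter $S_t(\gamma)$ — means I need the slightly stronger statement that the symmetric difference, not just the cardinality, is small; for this I would bound $\frac1p|S_{t_2}\setminus S_{t_1}|\le \frac1p(|S_{t_2}|-|S_{t_1}|) + \frac1p|\{i: i\in S_{t_1}, i\notin S_{t_2}\}|$ and control both pieces using that $\|x^{t_2}-x^{t_1}\|_2^2/p\to 0$ (a Cauchy consequence of Theorem~\ref{the:the1}, since both iterates converge in $L_2/p$ to $\hat x$) combined with a uniform lower bound on how far an active coordinate's pre-threshold value sits from the boundary in the limit.

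The main obstacle I anticipate is handling the set of coordinates $i$ for which $x_0$ or the limiting pre-threshold value $x_0+\tau^*Z$ sits exactly on the boundary $\pm(1-\gamma)\alpha\tau^*$ or exactly at $\pm\alpha\tau^*$ — i.e., the ``ties'' in the soft-thresholding operation. Because $x_0\sim\PP_{x_0}\in\mathcal F_\omega$ has an atom at $0$, one must check that the event $\{x_0=0,\ |\tau^*Z|=(1-\gamma)\alpha\tau^*\}$ has probability zero (it does, since $Z$ has a continuous density), but more care is needed on the atom $\{x_0=0\}$ itself where a positive fraction of coordinates cluster near the threshold; here the argument must use that, conditional on $x_0=0$, the probability that $|Z|$ lies in the thin annulus $[(1-\gamma)\alpha,\,(1-\gamma')\alpha]$ is $O(|\gamma-\gamma'|)$, which lets me make the boundary layer contribute less than $\xi$ by choosing the Lipschitz approximation fine enough. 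Once this continuity-of-measure estimate is in place, combining it with the convergence $\|x^{t_2}-x^{t_1}\|_2^2/p\to 0$ and the almost-sure convergence of $\frac1p|S_t(\gamma)|$ yields $\PP\{|S_{t_2}\setminus S_{t_1}|\ge p\xi\}\to 0$, completing the proof.
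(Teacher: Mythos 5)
The paper does not spell out a proof here: it defers entirely to Lemma 3.5 of \cite{BM12}, noting only that the set $S_t(\gamma)$ is defined differently. Your sketch follows broadly the same strategy as that reference (marginal convergence of $\tfrac1p|S_t(\gamma)|$ via the pseudo-Lipschitz state-evolution theorem, a thin-annulus/continuity-of-measure estimate near the boundary $(1-\gamma)\theta$, and a Cauchy-type closeness of the iterates across times $t_1,t_2$), and the observation that $i\in S_t(\gamma)$ is equivalent to the pre-threshold value $y^t_i=x^{t-1}_i+(A^TG(z^{t-1};b_{t-1}))_i$ satisfying $|y^t_i|\ge(1-\gamma)\theta_{t-1}$ is correct and is the right reduction.

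There is, however, one genuine gap: you obtain the key Cauchy property $\|x^{t_2}-x^{t_1}\|_2^2/p\to0$ as "a consequence of Theorem \ref{the:the1}, since both iterates converge in $L_2/p$ to $\hat x$." This is circular. Theorem \ref{the:the1} is proved via Lemma \ref{mainthem}, whose Condition $(C4)$ is supplied by Lemma \ref{lem:cond4}, which in turn invokes the very Lemma \ref{lem:temp2} you are proving. The Cauchy property must instead be derived directly from the state-evolution machinery for the general recursion of \cite{BM11} -- specifically from the joint (two-time) convergence of $(h^{t_1},h^{t_2})$ to a Gaussian pair $(\bar\tau_{t_1}Z_1,\bar\tau_{t_2}Z_2)$ whose correlation tends to one as $t_1,t_2\to\infty$, which gives $\lim_{t_1,t_2\to\infty}\lim_{p\to\infty}\tfrac1p\|h^{t_1}-h^{t_2}\|_2^2=0$ without any reference to the penalized M-estimator. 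Relatedly, your decomposition $|S_{t_2}\setminus S_{t_1}|\le(|S_{t_2}|-|S_{t_1}|)+|S_{t_1}\setminus S_{t_2}|$ is only an identity and does not by itself reduce the problem, since the last term is of the same type as the quantity being bounded; the actual control has to come from the coordinate-wise closeness of $y^{t_1}$ and $y^{t_2}$ (which also requires bounding $A^T\bigl(G(z^{t_2-1};b_{t_2-1})-G(z^{t_1-1};b_{t_1-1})\bigr)$ via $\sigma_{\max}(A)$ and the Lipschitz property of $G$, a step you do not mention) combined with your thin-annulus estimate. Once the Cauchy property is sourced from the state evolution rather than from Theorem \ref{the:the1}, the remainder of your argument goes through.
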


Proof of the Lemma \ref{lem:temp2} follows exact steps as Lemma 3.5 in \cite{BM12}. The change is in the definition of the appropriate set $S_t(\gamma)$; hence, we omit the proof.

The Lemma \ref{lem:temp1} and Lemma \ref{lem:temp2} imply the following important result.

\begin{lemma} \label{lem:cond4}
There exist constants $\gamma_1\in (0,1)$, $\gamma_2 = a_1(c)/2,\gamma_3 = a_2(c,t_{min}) >0 $ and $ t_{min} <\infty$ such that, for any $t\geq t_{min}$,
\[
\min\{\sigma_{min}(A_{S_t(\gamma_1)\cup S'}): S'\subseteq [p],|S'|\leq\gamma_2p\}\geq \gamma_3,
\]
with probability converging to 1 when $p \to \infty$.
\end{lemma}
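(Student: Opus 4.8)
The plan is to obtain the claim by stitching together the two preceding auxiliary lemmas, Lemma~\ref{lem:temp1} and Lemma~\ref{lem:temp2}, after first securing a crude but \emph{strict} upper bound on the size of the almost-saturated index set. Write $v^t=\theta_{t-1}^{-1}\bigl(x^{t-1}+A^{T}G(z^{t-1};b_{t-1})-x^t\bigr)$; since $x^t=\eta(x^{t-1}+A^{T}G(z^{t-1};b_{t-1});\theta_{t-1})$, a coordinate $i$ has $|v^t_i|=1$ exactly when $|x^{t-1}_i+(A^{T}G(z^{t-1};b_{t-1}))_i|>\theta_{t-1}$, i.e. exactly on $\supp(x^t)$, up to coordinates lying precisely at the threshold. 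By Lemma~\ref{lem:lem22} and the state-evolution analysis behind Theorem~\ref{the:the2} --- specifically the convergence \eqref{lim:lim} applied to $x_0-h^{t+1}=x^{t}+A^{T}G(z^{t};b_{t})$ --- the empirical distribution of $\bigl(x^{t-1}+A^{T}G(z^{t-1};b_{t-1}),x_0\bigr)$ converges to that of $(x_0+\bar\tau_{t-1}Z,x_0)$, which is absolutely continuous; approximating $\mathbbm{1}\{|v^t_i|\ge1-\gamma_1\}$ above and below by pseudo-Lipschitz functions then shows that $|S_t(\gamma_1)|/p$ concentrates around a deterministic limit that is continuous in $\gamma_1$ and tends, as $\gamma_1\downarrow0$, to $\PP\bigl(|x_0+\bar\tau_{t-1}Z|>\theta_{t-1}\bigr)$. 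The only genuinely substantive task is therefore to verify that this probability stays strictly below $\delta$, uniformly over all large $t$. This is the structural fact established for the LASSO in \cite{BM12}: because the threshold is chosen with $\alpha\ge\alpha_{\min}$ and, by Lemma~\ref{lem:uniquetau}, the recursion \eqref{tau}--\eqref{sigma} possesses a stationary point to which the iteration converges, the asymptotic fraction of detected coordinates is bounded away from $\delta$. Granting this, there exist $c>0$ and $\gamma_1\in(0,1)$ with $|S_t(\gamma_1)|\le p(\delta-c)$ with probability tending to one for all sufficiently large $t$.

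Fix $t_{\min}$ large enough that the above bound holds at $t=t_{\min}$. The set $S_{t_{\min}}(\gamma_1)$ is measurable with respect to the $\sigma$-algebra $\sigma_{t_{\min}}$ of Lemma~\ref{lem:temp1} and has cardinality at most $p(\delta-c)$, so Lemma~\ref{lem:temp1} provides constants $a_1=a_1(c)>0$ and $a_2=a_2(c,t_{\min})>0$ such that
\[
\min\bigl\{\sigma_{\min}(A_{S_{t_{\min}}(\gamma_1)\cup T}):T\subseteq[p],\ |T|\le a_1p\bigr\}\ge a_2
\]
with probability converging to one.

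Finally I propagate this bound from $t_{\min}$ to all $t\ge t_{\min}$ using Lemma~\ref{lem:temp2}. Apply that lemma with $\xi=a_1/2$ and $\gamma=\gamma_1$, enlarging $t_{\min}$ if necessary so that it exceeds the threshold $t_*$ it produces; then for every $t\ge t_{\min}$, $|S_t(\gamma_1)\setminus S_{t_{\min}}(\gamma_1)|\le(a_1/2)p$ with probability tending to one. For any $S'\subseteq[p]$ with $|S'|\le\gamma_2p$, where $\gamma_2:=a_1/2$, one has the inclusion
\[
S_t(\gamma_1)\cup S'\ \subseteq\ S_{t_{\min}}(\gamma_1)\cup T,\qquad T:=\bigl(S_t(\gamma_1)\setminus S_{t_{\min}}(\gamma_1)\bigr)\cup S',\quad |T|\le a_1p.
\]
Since $\sigma_{\min}(A_U)$ is non-increasing in $U$ (its defining minimum ranges over a larger set of unit vectors as $U$ grows), this inclusion and the previous display give $\sigma_{\min}(A_{S_t(\gamma_1)\cup S'})\ge\sigma_{\min}(A_{S_{t_{\min}}(\gamma_1)\cup T})\ge a_2$. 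Setting $\gamma_3:=a_2$ and taking a union bound over the finitely many events in play --- each of probability tending to one --- yields the lemma.

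I expect the only real obstacle to be the strict sub-$\delta$ bound of the first paragraph: transferring the state-evolution limit into the uniform estimate $|S_t(\gamma_1)|\le p(\delta-c)$ for all large $t$. This is where the admissibility requirement $\alpha\ge\alpha_{\min}$ together with the existence of the state-evolution fixed point (Lemma~\ref{lem:uniquetau}) must be invoked, exactly as in the least-squares analysis of \cite{BM12}, with the extra care that for non-smooth $\rho$ the fixed point need not be unique, so the bound must be checked at whichever limit $\bar\tau^{*}$ the iteration selects. Everything after that is bookkeeping with set inclusions and the monotonicity of the least singular value.
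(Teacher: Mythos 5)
Your proposal is correct and follows essentially the same route as the paper: establish $|S_t(\gamma_1)|\leq p(\delta-c)$ with high probability via the state-evolution limit, apply Lemma~\ref{lem:temp1} at $t_{\min}$, and propagate to all $t\geq t_{\min}$ through Lemma~\ref{lem:temp2} with $\xi=a_1/2$ and the monotonicity of the smallest singular value under set inclusion. The only cosmetic difference is that the paper justifies the strict sub-$\delta$ bound by computing the limit of $|S_t(\gamma)|/p$ explicitly and invoking $\omega<\delta$, whereas you defer to the $\alpha\geq\alpha_{\min}$ admissibility argument of \cite{BM12} — the same underlying fact.
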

\begin{proof}[Proof of Lemma \ref{lem:cond4}]
Observe that the $\sigma$ algebra $\sigma_t$, contains$\{x^0,...,x^t\}$ by design of the RAMP algorithms. Therefore, it contains the vector $v^t$. 
By Lemma \ref{lem:delta}, the empirical distribution of $(x_0-A^TG(z^{t-1},b_t)-x^{t-1},x_0)$ converges weakly to $(\bar{\tau}_{t-1}Z,x_0)$. Now we need to check if $S_t(\gamma)\leq p(\delta-c)$
\begin{eqnarray}
\underset{p\to\infty}{lim}\frac{|S_t(\gamma)|}{p} &=&\underset{p\to\infty}{lim}\frac{1}{p}\sum_{i=1}^{p} \mathbbm{1}_{\{\frac{1}{\theta_{t-1}}|x_i^{t-1}+[A^TG(z^{t-1};b_{t-1})]_i-x_i^{t}|\geq 1-\gamma\}}\nonumber\\
&=&\underset{p\to\infty}{lim}\frac{1}{p}\mathbbm{1}_{\{\frac{1}{\theta_{t-1}}|x_0-h^t-\eta(x_0-h^t,\theta_{t-1})|\geq 1-\gamma\}}\nonumber\\
&=&\PP \left\{\frac{1}{\theta_{t-1}}|x_0+\bar{\tau}_{t-1}Z-\eta(x_0+\tau_{t-1}Z,\theta_{t-1})|\geq 1-\gamma\right\} .\label{eq:solve}
\end{eqnarray}
Because
\[
|x_0+\bar{\tau}_{t-1}Z-\eta(x_0+\tau_{t-1}Z,\theta_{t-1})| = \left\{\begin{array}{cc}
\theta_{t-1} & |x_0+\bar{\tau}_{t-1}Z|\geq \theta_{t-1}\\
|x_0+\bar{\tau}_{t-1}Z| & others
\end{array}
\right. ,
\]
from the equation \eqref{eq:solve}, we conclude
\[
\underset{p\to\infty}{lim}\frac{|S_t(\gamma)|}{p} = \EE\{\eta'(x_0+\bar{\tau}_{t-1}Z,\theta_{t-1})\}+\PP \left\{(1-\gamma)\leq\frac{1}{\theta_{t-1}}|x_0+\bar{\tau}_{t-1}Z|\leq 1\right\}.
\]
The fact that $\omega < \delta$, the first term will be strictly smaller than $\delta$ for large enough t. And the second term converges to 0. Therefore, we can choose constants $\gamma_1\to(0,1)$ and $c>0$ such that
\[
\underset{p\to\infty}{lim}\PP \left\{|S_t(\gamma_1)|<p(\delta-c)\right\} = 1.
\]
for all t larger than some $t_{min}$. For any $t\geq t_{min}$, apply  Lemma \ref{lem:temp1} for some $a_1$ and $a_2$. Fix $c>0$ and $a_1$. Let $\xi= a_1/2$ in Lemma \ref{lem:temp2}, $t_{min} = max(t_{min}, t_*(a_1/2,\gamma_1))$.  We have 
\[
\min\{\sigma_{min}(A_{S_t(\gamma_1)\cup S'}): S'\subseteq [p],|S'|\leq a_1p\}\geq a_2,
\]
together with
$
\underset{p\to\infty}{lim}\PP\{|S_t \backslash S_{t_{min}}| \geq pa_1/2\} = 0.
$
\end{proof}

\begin{proof}[Proof of Theorem \ref{thm:thm10}]
The result of Theorem \ref{thm:thm10}  follows   the same arguments as those of Theorem 1.5 of  \cite{BM12}; we observe  that   ${||x^{t+1}||_2^2}/{p}$, ${||\hat{x}||^2_2}/{p}$ are bounded and   that 
\[
\underset{p\to\infty}{lim}\frac{1}{p}\sum_{i = 1}^p\psi(\hat{x_i},x_{0,i}) =\underset{t\to\infty}{lim} \underset{p\to\infty}{lim}\sum_{i = 1}^p\psi(x_i^{t+1},x_{0,i}).
\]
 By Theorem \ref{the:the1}, we have  ${||x^{t+1}||_2^2}/{p}$ is bounded. Moreover,  an upper bound on ${||\hat{x}||^2_2}/{p}$ is guaranteed by the conditions.
\end{proof}

\end{document}